\documentclass[11pt,twoside, leqno]{article}

\usepackage{amsthm}
\usepackage{amssymb}
\usepackage{amsmath}
\usepackage{mathrsfs}
\usepackage{txfonts}
\usepackage{graphics}
\usepackage{color}
\usepackage[Symbol]{upgreek}

\allowdisplaybreaks \pagestyle{myheadings}
\pagestyle{myheadings}\markboth{\sc Heat Kernel Bounds} {\sc
Renjin Jiang, Huaiqian Li \& Huichun Zhang}

\textwidth=15cm \textheight=21.0cm \oddsidemargin 0.45cm
\evensidemargin 0.45cm

\parindent=13pt

%
%
%

\newcommand{\ls}{\leqslant}
\newcommand{\gs}{\geqslant}

\newcommand{\R}{\mathbb{R}}

\def\rr{{\mathbb R}}

\def\cn{{\mathbb N}}

\def\loc{{\mathop\mathrm{\,loc\,}}}
\def\Lip{{\mathop\mathrm{\,Lip\,}}}

\def\ez{\epsilon}

\def\gz{{\gamma}}

\def\sz{\sigma}

\def\r{\right}
\def\lf{\left}

\def\la{\langle}
\def\ra{\rangle}

\newtheorem{thm}{Theorem}[section]
\newtheorem{lem}{Lemma}[section]
\newtheorem{prop}{Proposition}[section]
\newtheorem{rem}{Remark}[section]
\newtheorem{cor}{Corollary}[section]
\newtheorem{defn}{Definition}[section]

\numberwithin{equation}{section}

\begin{document}
\arraycolsep=1pt

\title{\bf\Large Heat Kernel Bounds on Metric Measure Spaces\\
and Some Applications
\footnotetext{\hspace{-0.35cm}
2010 \emph{Mathematics Subject Classification} Primary 53C23; Secondary 35K08; 35K05; 42B20; 47B06
\endgraf
{\it Key words and phrases} metric measure space, Ricci curvature, heat kernel, heat equation, Riesz transform
\endgraf
R.J. is partially supported by NSFC (No. 11301029); H.L. is partially supported by NSFCs (No. 11401403 and No. 11371099) and the ARC grant (DP130101302); H.Z. is partially supported by NSFC (No. 11201492) and by Guangdong Natural Science Foundation (No. S2012040007550).
}}
\author{Renjin Jiang, Huaiqian Li \& Huichun Zhang}
\date{}

\maketitle

\begin{center}
\begin{minipage}{11cm}\small
{\noindent{\bf Abstract} Let $(X,d,\mu)$ be a $RCD^\ast(K, N)$ space with $K\in \rr$ and $N\in [1,\infty)$. We derive the upper and lower bounds of the heat kernel on $(X,d,\mu)$ by applying the parabolic Harnack inequality and the comparison principle, and then sharp bounds for its gradient, which are also sharp in time.  For applications, we study the large time behavior of the heat kernel, the stability of solutions to the heat equation, and show the $L^p$ boundedness of (local) Riesz transforms. }\end{minipage}
\end{center}

\section{Introduction and main results}
\hskip\parindent Let $M$ be a complete (smooth) Riemannian manifold with dimension $n\ge 2$, and let $p_t$ be the heat kernel. Denote the length in the tangent space by $|\cdot|$. It is well-known from Li and Yau \cite{ly86} that, if $M$ has nonnegative Ricci curvature,
then for any $\epsilon>0$, there exist positive constants $C(\epsilon)$ and $C_1(\epsilon)$, such that
\begin{equation*}
\frac{1}{C(\epsilon)\mu(B(y,\sqrt t))}\exp\lf(-\frac{d^2(x,y)}{(4-\epsilon)t}\r)
\ls p_t(x,y)\ls \frac{C(\epsilon)}{\mu(B(y,\sqrt t))}\exp\lf(-\frac{d^2(x,y)}{(4+\epsilon)t}\r),
\end{equation*}
and
\begin{equation*}
|\nabla_x p_t(x,y)|\le \frac{C_1(\epsilon)}{\sqrt t\mu(B(y,\sqrt t))}\exp\lf(-\frac{d^2(x,y)}{(4+\epsilon)t}\r),
\end{equation*}
for all $t>0$ and all $x,y\in M$.

It is known that the upper and lower bounds of $p_t(x,y)$ has been extended by Sturm \cite{st2,st3} to the Dirichlet space $(X,\mathcal{E},\mu)$ supporting a (weak) local Poincar\'e inequality and the doubling measure $\mu$, where $X$ is a locally compact separable Hausdorff space, $\mathcal{E}$ is a strongly local, regular symmetric Dirichlet form and $\mu$ is a positive Radon measure. Note that here the distance is the so-called intrinsic distance induced by the Dirichlet form $\mathcal{E}$. For the case of the non-symmetric and time-dependent Dirichlet form in the similar framework, see the recent paper \cite{LierlSaloff-Coste}.

Throughout this work, let $(X,d,\mu)$ be a metric measure space, such that $(X,d)$ is a complete and separable metric space and $\mu$ is a locally finite (i.e., finite on bounded sets) Borel regular measure with support the whole space $X$.

Recently, in the metric measure space $(X,d,\mu)$, Erbar et al.\cite{eks13} and Ambrosio et al. \cite{ams13-1} introduced the Riemannian curvature-dimension condition, denoted by $RCD^*(K,N)$, which is a generalization of the ``Ricci curvature lower bound'' for the non-smooth setting and a strengthening of the curvature-dimension condition introduced by Lott and Villani \cite{lv09} and Sturm \cite{stm4,stm5}. The $RCD^*(K,N)$ space resembles more a Riemannian structure in the sense that Cheeger energy and Sobolev spaces are Hilbertian. We refer the reader to \cite{ags1,ags2,ags3,agmr,ams13,eks13} for more details (see also Section 2 below). In this work, we study the sharp heat kernel bounds on $RCD^\ast(K,N)$ spaces with $K\in \rr$ and $N\in [1,\infty)$, and then give some applications.

Notice that Sturm's results in \cite{stm1,st2,st3} are valid for a metric measure space $(X,d,\mu)$ satisfying the $RCD^\ast(0,N)$ condition with $N\in [1,\infty)$, since in these cases doubling property and Poincar\'e inequality hold; see e.g. \cite{eks13,raj1}. However, for example, the constant in
the exponential term in the Gaussian lower bound in \cite{st3} is not sharp. The first main result below gives a sharper lower bound of $p_t(x,y)$.
The approach is adapted from Strum \cite{st1} in the Riemannian setting, by applying the Laplacian comparison principle established by Gigli in \cite{gi12} and the parabolic Harnack inequality established by Garofalo and Mondino \cite{gm14} and the first named author  \cite{jia14}.

\begin{thm}\label{lowerbound-nonnegative}
Let $(X,d,\mu)$ be a  $RCD^*(0,N)$ space with $N\in [1,\infty)$.
Given any $\epsilon>0$, there exists a positive constant $C_1(\epsilon)$ such that
\begin{equation}\label{kernel}
\frac{1}{C_1(\epsilon)\mu(B(y,\sqrt t))}\exp\Big(-\frac{d^2(x,y)}{(4-\epsilon)t}\Big)\le p_t(x,y)\le \frac{C_1(\epsilon)}{\mu(B(y,\sqrt t))}\exp\Big(-\frac{d^2(x,y)}{(4+\epsilon)t}\Big)
\end{equation}
for all $t>0$ and all $x,y\in X$.
\end{thm}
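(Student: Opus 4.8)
The plan is to prove the two inequalities in \eqref{kernel} by different means. The right-hand (upper) inequality is essentially already known: an $RCD^*(0,N)$ space is locally doubling and supports a local $L^2$-Poincar\'e inequality (\cite{eks13,raj1}), and the intrinsic distance of its canonical (Cheeger) Dirichlet form coincides with $d$; hence Sturm's framework \cite{st2,st3} applies, and the near-optimal exponent $1/(4+\epsilon)$ on the upper side follows in the standard way from the Davies--Gaffney $L^2$ off-diagonal estimate (which uses only the Dirichlet-form structure) together with the on-diagonal bound $p_t(x,x)\ls C/\mu(B(x,\sqrt t))$, doubling, and Davies' exponential-weight argument. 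So the work lies in the left-hand (lower) inequality, where I would adapt Sturm's comparison method \cite{st1}.

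First I would record the near-diagonal lower bound. From the on-diagonal upper bound, conservativeness $\int_X p_t(y,z)\,d\mu(z)=1$, Cauchy--Schwarz and doubling one gets $p_t(y,y)\gs c/\mu(B(y,\sqrt t))$, and feeding the solution $(s,z)\mapsto p_s(z,y)$ into the parabolic Harnack inequality of \cite{gm14,jia14} and iterating it along a chain of bounded length gives, for each fixed $\Lambda>0$, a constant $c(\Lambda)>0$ with $p_t(z,y)\gs c(\Lambda)/\mu(B(y,\sqrt t))$ whenever $d^2(z,y)\ls\Lambda t$. This already settles the lower bound when $d^2(x,y)\ls\Lambda t$, and by chaining it also yields a crude global lower bound $p_t(z,y)\gs \frac{a_0}{\mu(B(y,\sqrt t))}\exp(-d^2(z,y)/(\kappa_0 t))$, to serve only as an auxiliary input. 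Next, fix $y$, put $\rho:=d^2(\cdot,y)$, $c:=4-\epsilon$. Gigli's Laplacian comparison \cite{gi12} gives, for $K=0$, $\Delta\rho\ls 2N\mu$ for the measure-valued Laplacian, while $|\nabla\rho|^2=4\rho$ $\mu$-a.e.; using the chain rule for the measure-valued Laplacian one checks that, with $\phi(t):=1/\mu(B(y,\sqrt t))$ and $g(t,z):=C^{-1}\phi(t)\exp(-\rho(z)/(ct))$,
\begin{equation*}
\partial_t g-\Delta g\ \ls\ g\cdot\Big(\frac{\phi'(t)}{\phi(t)}+\frac{2N}{ct}-\frac{\epsilon\,\rho}{c^2t^2}\Big)\mu ,
\end{equation*}
the negative singular part of $\Delta\rho$ only helping. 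Since $t\mapsto\mu(B(y,\sqrt t))$ is nondecreasing, $\phi'\ls0$, so the right-hand side is $\ls0$ on the region $\Omega_T:=\{(t,z):0<t\ls T,\ \rho(z)\gs\Lambda t\}$ with $\Lambda:=2Nc/\epsilon$; that is, $g$ is a weak subsolution of the heat equation on $\Omega_T$.

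The last step is to compare $g$ with the solution $(t,z)\mapsto p_t(z,y)$ on $\Omega_T$. On the lateral parabolic boundary $\{\rho(z)=\Lambda t\}$ one has $g=C^{-1}e^{-\Lambda/c}\phi$, which is $\ls p_t(z,y)$ by the near-diagonal bound with this fixed $\Lambda$ as soon as $C$ is large enough; at spatial infinity both sides tend to $0$; and the remaining, initial part of the parabolic boundary I would handle by exhausting $\Omega_T$ with $\{t\gs\delta\}$, $\delta\downarrow0$, using the crude bound and, if necessary, iterating the comparison finitely many times so as to raise the auxiliary exponent $\kappa_0$ up to $4-\epsilon$. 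The parabolic maximum principle then gives $g\ls p$ on $\Omega_T$; evaluating at $(T,x)$ for $d^2(x,y)\gs\Lambda T$, and combining with the near-diagonal case for $d^2(x,y)<\Lambda T$, yields the lower bound in \eqref{kernel} after renaming $T$ as $t$.

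I expect the comparison (the last step) to be the main obstacle. The subsolution $g$ has a Gaussian tail governed by the \emph{non-optimal} exponent $1/(4-\epsilon)$, which is fatter than what the crude near-diagonal-chaining bound supplies, so the comparison on $\Omega_T$ cannot simply be closed by that crude bound near $t=0$; the argument must be arranged so that the parabolic boundary of the comparison region is controlled purely through the \emph{scale-invariant} near-diagonal Harnack estimate — possibly after a finite iteration — and it is exactly in this point that Gigli's Laplacian comparison (which manufactures the subsolution with the correct exponent) and the Garofalo--Mondino/Jiang parabolic Harnack inequality (which controls the boundary data) are both indispensable.
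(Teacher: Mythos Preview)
Your subsolution computation is correct, and you have correctly identified the obstruction; but the proposed fix does not work, and this is exactly where your route diverges from the paper's.  On the bottom of $\Omega_T$ (or at spatial infinity, which is the same issue in scale-invariant form) you must dominate
\[
g(\delta,z)=C^{-1}\phi(\delta)\,e^{-\rho(z)/((4-\epsilon)\delta)}
\quad\text{by}\quad
p_\delta(z,y)\ \gs\ a_0\,\phi(\delta)\,e^{-\rho(z)/(\kappa_0\delta)}.
\]
If $\kappa_0<4-\epsilon$ the tail of $g$ is strictly fatter, and no finite $C$ works uniformly in $\rho$; conversely, if you could take $\kappa_0\gs 4-\epsilon$ you would already have the theorem, so the comparison is circular.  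A finite iteration does not escape this: each pass only reproduces the input exponent on the region where the boundary data is controlled, and any attempt to push the outer boundary $\{\rho=Mt\}$ to infinity forces $C\to\infty$.  What would actually close your argument is Varadhan's short-time asymptotics $t\log p_t(z,y)\to -\rho(z)/4$, which \emph{does} give $g\ll p$ as $t\to 0$; but that is an independent (and nontrivial) input you have not invoked.

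The paper sidesteps the whole issue by running the Laplacian comparison at the level of a \emph{bounded} quantity rather than a pointwise Gaussian.  Following Sturm \cite{st1}, one transplants a radially symmetric solution of the heat equation on $\rr^{N_1}$ to $X$ via $r\mapsto d(\cdot,x)$; by Lemma~\ref{lap-comp} the transplant is a sub/supersolution, and---crucially---its initial data coincides with that of the Euclidean solution, so there is no tail mismatch at $t=0$.  This yields the integral estimate
\[
\int_{B(y,\sqrt t)}p_t(x,z)\,d\mu(z)\ \gs\ C(N_1)\exp\Big(-\frac{d^2(x,y)}{4(1-\epsilon)t}-\frac{1+\epsilon^{-1}}{2}\Big),
\]
a number in $[0,1]$.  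A single application of the Li--Yau Harnack inequality (Lemma~\ref{harnack-old}(i)), whose exponential already carries the sharp denominator $4(t-s)$, then converts this average into the pointwise lower bound with the volume prefactor $\mu(B(y,\sqrt t))^{-1}$.  Note, incidentally, that this last Harnack step is so sharp that one could equally well apply it directly between $p_{\delta t}(y,y)$ and $p_t(x,y)$, using only the on-diagonal lower bound $p_s(y,y)\gs c/\mu(B(y,\sqrt s))$; your ``near-diagonal'' paragraph is one line away from this.
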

Combining this, and the Li--Yau inequality from \cite{gm14,jia14}, we can immediately derive a sharp bound for the gradient of the heat kernel. Here, we denote the minimum weak upper gradient of a function $f:X\rightarrow \R$ by $|\nabla f|$ (assume its existence at present).
\begin{cor}\label{gradient-bound-nonnegative}
Let $(X,d,\mu)$ be a  $RCD^*(0,N)$ space with $N\in [1,\infty)$.
Given any $\epsilon>0$, there exists a positive constant $C_1(\epsilon)$ such that
\begin{equation}\label{kernelgradient}
|\nabla p_t(x,\cdot)|(y)\le \frac{C_1(\epsilon)}{\sqrt t\mu(B(y,\sqrt t))}\exp\Big(-\frac{d^2(x,y)}{(4+\epsilon)t}\Big).
\end{equation}
for all $t>0$ and $\mu$-a.e. $x,y\in X$.
\end{cor}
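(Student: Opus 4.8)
The plan is to derive the gradient bound \eqref{kernelgradient} from the upper heat kernel estimate in Theorem~\ref{lowerbound-nonnegative} together with the Li--Yau gradient estimate on $RCD^*(0,N)$ spaces from \cite{gm14,jia14}. Recall that the latter states (in its basic form) that any positive solution $u$ of the heat equation satisfies $|\nabla \log u|^2 - \partial_t \log u \le N/(2t)$ $\mu$-a.e., and in particular, applied to $u = p_t(x,\cdot)$, this yields the pointwise bound $|\nabla_y p_t(x,y)|^2 \le p_t(x,y)\lf(\partial_t p_t(x,y) + \tfrac{N}{2t} p_t(x,y)\r)$ wherever these quantities make sense. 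So the first step is to fix the point $x$, the time $t>0$, and work with $u(s,y) := p_s(x,y)$ as a positive solution of the heat equation on $(0,\infty)\times X$; by the regularizing properties of the heat semigroup on $RCD^*$ spaces (local Lipschitz continuity in space, smoothness in time), $u$ is an admissible function for the Li--Yau inequality for $\mu$-a.e.\ $x$.

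Next I would combine the two ingredients. From $|\nabla_y u(t,y)|^2 \le u(t,y)\,\partial_t u(t,y) + \tfrac{N}{2t}\,u(t,y)^2$, I need to control both $u(t,y)$ and $\partial_t u(t,y)$ by the Gaussian upper bound. The first is exactly \eqref{kernel}. For the time derivative, the standard trick is to use the semigroup identity $\partial_t p_t(x,y) = \partial_t p_{t/2+t/2}(x,y)$ together with the Chapman--Kolmogorov equation $p_t(x,y) = \int_X p_{t/2}(x,z)\,p_{t/2}(z,y)\,d\mu(z)$, differentiate in $t$, and bound $|\partial_t p_{t/2}|$ via a time-derivative heat kernel estimate; alternatively, and more cleanly, one uses the analyticity of the heat semigroup on $L^2$ (valid here since the Cheeger energy is a Dirichlet form) which gives $\|\partial_t P_t\|_{2\to 2} \lesssim 1/t$, upgraded to a pointwise estimate $|\partial_t p_t(x,y)| \le \tfrac{C(\epsilon)}{t\,\mu(B(y,\sqrt t))}\exp\lf(-\tfrac{d^2(x,y)}{(4+\epsilon)t}\r)$ through iterated Chapman--Kolmogorov and the doubling property (this type of estimate is classical once one has the Gaussian upper bound; see Sturm \cite{st3}). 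Feeding these into the Li--Yau inequality gives
\[
|\nabla_y p_t(x,y)|^2 \le \frac{C(\epsilon)}{t\,\mu(B(y,\sqrt t))^2}\exp\lf(-\frac{2\,d^2(x,y)}{(4+\epsilon)t}\r),
\]
and taking square roots, after relabelling $\epsilon$, yields \eqref{kernelgradient}.

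The main obstacle, I expect, is the passage from the $L^2$ (or weak) version of the Li--Yau estimate to a genuine pointwise inequality for the heat kernel, and the justification that $p_t(x,\cdot)$ has a well-defined minimal weak upper gradient satisfying it for $\mu$-a.e.\ $x$ — this requires the self-improvement/Bakry--\'Emery machinery on $RCD^*(0,N)$ spaces and the fine regularity theory of the heat kernel (joint measurability, the symmetry $p_t(x,y)=p_t(y,x)$, local Lipschitz regularity), which is why the corollary is stated for $\mu$-a.e.\ $x,y$ rather than everywhere. A secondary technical point is establishing the Gaussian-type bound on $\partial_t p_t$ with the sharp exponential constant $4+\epsilon$; handling the $\epsilon$-bookkeeping carefully (so that splitting the time interval and applying Chapman--Kolmogorov does not degrade the constant below the sharp value) is routine but must be done with some care. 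Once these points are in place, the proof is a short combination of Theorem~\ref{lowerbound-nonnegative}, the Li--Yau inequality, and an elementary estimate.
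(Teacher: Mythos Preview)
Your proposal is correct and follows essentially the same route as the paper: apply the Li--Yau inequality from \cite{gm14,jia14} in the form $|\nabla p_t|^2 \le \tfrac{N}{2t}\,p_t^2 + p_t\,|\partial_t p_t|$, then bound $p_t$ by Theorem~\ref{lowerbound-nonnegative} and $|\partial_t p_t| = |\Delta p_t|$ by the corresponding Gaussian time-derivative estimate. The paper's proof is in fact more terse than yours---it leaves the bound on $|\Delta p_t|$ implicit---so your care with the $\partial_t p_t$ estimate and the $\epsilon$-bookkeeping is well placed but not strictly needed beyond citing the standard result (cf.\ Davies \cite{da97} or Sturm \cite{st3}).
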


{Notice that in \cite{jia14}, the author used similar forms of \eqref{kernel} and \eqref{kernelgradient}
 but with implicit constants in the exponential terms to obtain the Li-Yau inequality. The strength of Theorem \ref{lowerbound-nonnegative} and Corollary \ref{gradient-bound-nonnegative} is that
 we obtain the sharp constants in the exponential terms.}

We shall then establish the following heat kernel bounds on general $RCD^*(K,N)$ spaces with $K<0$ and $N\in [1,\infty)$.
\begin{thm}\label{lb-negative}
Let $(X,d,\mu)$ be a  $RCD^*(K,N)$ space with $K<0$ and $N\in [1,\infty)$.
Given any $\epsilon>0$, there exist positive constants $C_1(\epsilon),C_2(\epsilon)$, depending also on $K,N$,
  such that
\begin{equation}
\frac{1}{C_1(\epsilon)\mu(B(y,\sqrt t))}\exp\Big(-\frac{d^2(x,y)}{(4-\epsilon)t}-C_2(\epsilon) t\Big)\le p_t(x,y)\le
\frac{C_1(\epsilon)}{\mu(B(y,\sqrt t))}\exp\Big(-\frac{d^2(x,y)}{(4+\epsilon)t}+C_2(\epsilon) t\Big)
\end{equation}
for all $t>0$ and all $x,y\in X$.
\end{thm}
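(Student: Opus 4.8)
The plan is to deduce Theorem~\ref{lb-negative} from the already-established non-negative case (Theorem~\ref{lowerbound-nonnegative}) by a combination of a time-rescaling/truncation argument and an iteration in time. The rough idea is that an $RCD^*(K,N)$ space with $K<0$ behaves, at small scales and over short time intervals, like an $RCD^*(0,N)$ space up to a multiplicative error that is controlled by $e^{Ct}$; the challenge is only to propagate such bounds to \emph{all} times $t>0$ and \emph{all} pairs $x,y$, with the correct $(4\pm\epsilon)$ constant surviving in the Gaussian exponent.

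First I would record the ingredients that transfer directly from the $K<0$ theory: (i) the doubling property and the (weak, scale-invariant) $L^2$ Poincar\'e inequality hold on bounded sets, hence the parabolic Harnack inequality of \cite{gm14,jia14} is available, but now with constants that may depend on $K,N$ and on the radius of the ball; (ii) the Laplacian comparison principle of \cite{gi12} with the $K<0$ model, which gives $\Delta d^2(\cdot,y)\le$ (something like) $2N + C(K,N)\,d(\cdot,y)\coth(\sqrt{-K/(N-1)}\,d(\cdot,y))\,d(\cdot,y)$, so that on the region $d(x,y)^2\le t$ the extra terms are harmless. Following Sturm's method in \cite{st1}, the upper bound is obtained by running the Davies--Gaffney/integrated-Gaussian argument: one uses the energy estimate $\frac{d}{dt}\int \psi(x) p_t(x,y)\,d\mu + \frac14\int |\nabla\psi|^2 p_t \le 0$ for $\psi$ a suitable exponential weight, combined with the on-diagonal bound $p_t(x,x)\le C/\mu(B(x,\sqrt t))$ that follows from Harnack plus doubling; the curvature correction enters only through a constant, producing the $+C_2(\epsilon)t$ term. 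For the lower bound one chains the parabolic Harnack inequality along a geodesic from $x$ to $y$ in $\lceil d^2(x,y)/t\rceil$ steps, exactly as in the $RCD^*(0,N)$ case, but each application of Harnack on a ball of radius $\sim\sqrt t$ now costs a factor depending on $K$ and on that radius; the product of these factors over the chain is what generates $\exp(-C_2(\epsilon)t)$ together with the $\exp(-d^2(x,y)/((4-\epsilon)t))$ term.

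Concretely, I would split into two regimes. \emph{Short times / nearby points} ($d^2(x,y)\le t \le 1$, say): here the $K<0$ corrections to both the Harnack constant and the Laplacian comparison are bounded absolutely, so one mimics the proof of Theorem~\ref{lowerbound-nonnegative} verbatim and gets \eqref{kernel}-type bounds with a fixed constant $C_1(\epsilon)$; the $C_2(\epsilon)t$ term is then trivially absorbable. \emph{General $t$}: one first reduces to small time by the time-iteration/chaining argument above (for the lower bound, chain Harnack in time-steps of length $\min(t,1)$; for the upper bound, one can alternatively use the reproducing property $p_t = \int p_{t/2}(\cdot,z)p_{t/2}(z,\cdot)\,d\mu(z)$ together with the short-time upper bound and a careful integration, or directly run the weighted energy estimate whose curvature term integrates to $C_2 t$). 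The only place the argument is genuinely different from the $K=0$ case is bookkeeping: one must check that the \emph{multiplicative} accumulation of Harnack constants really produces an \emph{exponential} in $t$ (not worse), which follows because the number of chaining steps is $O(t/\min(t,1)) = O(1+t)$ and each step costs a fixed multiplicative constant $\ge 1$, and that the $d^2(x,y)/((4\mp\epsilon)t)$ rate is not degraded — this is guaranteed by choosing the number of geodesic subdivisions as $\lceil (1+\epsilon')d^2(x,y)/t\rceil$ and letting $\epsilon'\to0$, exactly as in \cite{st1}.

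The main obstacle, and the step I would spend the most care on, is ensuring the exponential-in-$t$ error term $C_2(\epsilon)t$ is sharp in the sense required — i.e., that it does not contaminate the $1/((4\pm\epsilon)t)$ coefficient of $d^2(x,y)$. This forces the two error mechanisms (the geodesic-chaining count controlling the $d^2/t$ term, and the time-chaining count controlling the $e^{C_2 t}$ term) to be kept strictly separate: one should first fix $\epsilon$, choose the spatial subdivision fine enough that the Gaussian rate is within $\epsilon$ of optimal \emph{uniformly in the curvature-dependent constants}, and only then iterate in time. A secondary technical point is that the Harnack constant from \cite{gm14,jia14} on a ball $B(x,r)$ depends on $r$ through $\sqrt{-K}\,r$; restricting every Harnack application to balls of radius $\lesssim \max(\sqrt t,1)$ and using scale-invariance of the doubling/Poincar\'e data on such balls keeps this dependence under control. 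Once these are in place, the remaining estimates are the same local-Harnack and weighted-energy computations already used for Theorem~\ref{lowerbound-nonnegative}.
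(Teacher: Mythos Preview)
Your upper-bound outline (Davies-type weighted $L^2$ estimate plus on-diagonal control from Harnack and doubling) matches the paper's argument, which combines Lemma~\ref{maximum-p-t} with the Li--Yau Harnack of Lemma~\ref{harnack-old}(ii) and splits into $t<1/\epsilon$ versus $t\ge 1/\epsilon$; that part is essentially correct.

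The lower bound is where your plan both diverges from the paper and carries a real gap. Neither the proof of Theorem~\ref{lowerbound-nonnegative} nor Sturm~\cite{st1} uses a geodesic-chaining argument; both rely on the Laplacian comparison principle (Lemma~\ref{lap-comp}) together with the parabolic maximum principle to bound $\int_{B(y,\sqrt t)}p_t(x,z)\,d\mu(z)$ from below by the corresponding integral of the \emph{model-space} heat kernel --- Euclidean $\mathbb R^{N_1}$ when $K=0$, and the hyperbolic space $E^{k,N_1}$ with $k=K/(N_1-1)$ when $K<0$. The sharp $4(1-\epsilon)$ in the Gaussian exponent is read off directly from the explicit hyperbolic heat kernel (the $-C_2(\epsilon)t$ term coming from its spectral decay), and one then applies the Harnack inequality \emph{once} to pass from the integral bound to the pointwise one. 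Your chaining scheme cannot recover the sharp constant: the Li--Yau Harnack in Lemma~\ref{harnack-old}(ii) carries the factor $e^{2Kt/3}$ in the denominator, where $t$ is the \emph{absolute} endpoint time rather than the step length, so iterating it along a geodesic in $n$ equal steps produces an exponent $\frac{d^2}{4nt}\sum_{i=1}^n e^{-2Kit/(3n)}$, which for large $t$ behaves like $c\,\frac{d^2}{t^2}e^{2|K|t/3}$ regardless of $n$ --- the curvature factor cannot be decoupled from the Gaussian rate and pushed into a separate $C_2 t$ term. Switching to the local Moser-type Harnack removes the absolute-time dependence but loses the sharp $4$. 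The comparison with the hyperbolic model is the missing mechanism, and it is exactly what the paper invokes.
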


\begin{cor}\label{gradient-bound-negative}
Let $(X,d,\mu)$ be a  $RCD^*(K,N)$ space with $K<0$ and $N\in [1,\infty)$.
Given any $\epsilon>0$, there exist positive constants $C_1(\epsilon),C_2(\ez)$ such that
\begin{equation}\label{kernelgradientnegative}
|\nabla p_t(x,\cdot)|(y) \le \frac{C_1(\epsilon)}{\sqrt t\mu(B(y,\sqrt t))}\exp\Big(-\frac{d^2(x,y)}{(4+\epsilon)t}+C_2(\ez)t\Big).
\end{equation}
for all $t>0$ and $\mu$-a.e. $x,y\in X$.
\end{cor}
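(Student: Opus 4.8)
\noindent The plan is to reproduce the proof of Corollary \ref{gradient-bound-nonnegative} with two bookkeeping changes: the $RCD^*(0,N)$ Li--Yau inequality is replaced by the $RCD^*(K,N)$ one with $K<0$, which merely adds an extra additive constant on the right-hand side, and Theorem \ref{lowerbound-nonnegative} is replaced by Theorem \ref{lb-negative}, whose Gaussian bounds carry the factor $e^{C_2(\epsilon)t}$; one then verifies that these two modifications only enlarge the exponential factor in the final estimate. Fix $x\in X$ in the full-measure set on which $(t,y)\mapsto p_t(x,y)$ is a genuine positive solution of the heat equation, fix $\alpha=2$ (any $\alpha>1$ would serve, since the constant in the exponent of \eqref{kernelgradientnegative} is governed by Theorem \ref{lb-negative}, not by $\alpha$), and invoke the Li--Yau type inequality of \cite{gm14,jia14}: on an $RCD^*(K,N)$ space with $K<0$, every positive solution $u$ of the heat equation satisfies, $\mu$-a.e.,
\begin{equation*}
|\nabla u|^2\ls 2\,u\,\partial_t u+\Big(\frac{c_1(N)}{t}+c_2(N,K)\Big)u^2,\qquad t>0 .
\end{equation*}
Taking $u(t,y)=p_t(x,y)$ gives, for all $t>0$ and $\mu$-a.e. $y\in X$,
\begin{equation*}
|\nabla p_t(x,\cdot)|^2(y)\ls 2\,p_t(x,y)\,\partial_t p_t(x,y)+\Big(\frac{c_1(N)}{t}+c_2(N,K)\Big)p_t(x,y)^2 .
\end{equation*}

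\noindent The second ingredient is a Gaussian bound for $\partial_t p_t$. From the upper bound of Theorem \ref{lb-negative}, the doubling property and the conservativeness of $RCD^*(K,N)$ spaces, the classical self-improvement of Gaussian upper bounds --- via analyticity of the heat semigroup on $L^2(\mu)$ and a Cauchy integral representation of $\partial_t p_t$ over a small complex-time contour; see \cite{st2,st3} and the references therein --- gives, for every $\epsilon>0$, constants $C_1(\epsilon),C_2(\epsilon)>0$ with
\begin{equation*}
|\partial_t p_t(x,y)|\ls \frac{C_1(\epsilon)}{t\,\mu(B(y,\sqrt t))}\exp\Big(-\frac{d^2(x,y)}{(4+\epsilon)t}+C_2(\epsilon)t\Big)
\end{equation*}
for all $t>0$ and $\mu$-a.e. $x,y\in X$. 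Since $2\,u\,\partial_t u\ls 2\,u\,|\partial_t u|$ in all cases, inserting this estimate and the upper bound of Theorem \ref{lb-negative} (for the same $\epsilon$) into the previous display bounds its right-hand side by a product of two Gaussian expressions, whence
\begin{equation*}
|\nabla p_t(x,\cdot)|^2(y)\ls \Big(\frac{C}{t}+C'\Big)\frac{1}{\mu(B(y,\sqrt t))^2}\exp\Big(-\frac{2\,d^2(x,y)}{(4+\epsilon)t}+2C_2(\epsilon)t\Big)
\end{equation*}
with $C,C'$ depending on $\epsilon,N,K$. Taking square roots, using $\sqrt{a+b}\ls\sqrt a+\sqrt b$, and absorbing the leftover factor $\sqrt C+\sqrt{C't}$ into a slightly larger exponential --- harmless, since $t^{1/2}e^{-t}$ is bounded on $(0,\infty)$ --- then produces \eqref{kernelgradientnegative} after relabelling the constants.

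\noindent The step I expect to cost the most is the time-derivative estimate above: although it is classical in the Riemannian and in the Dirichlet-form settings, one must check that it carries over to $RCD^*(K,N)$ spaces \emph{with the exponential constant $4+\epsilon$ preserved}, the only structural inputs being self-adjointness of the generator (hence analyticity of the heat semigroup), doubling of $\mu$, and conservativeness --- all of which hold here. Should one wish to avoid $\partial_t p_t$ altogether, an alternative is to combine the Bakry--\'Emery $L^2$-gradient estimate with the identity $\int_X|\nabla_y p_t(x,y)|^2\,d\mu(y)=-\tfrac12\,\tfrac{d}{dt}\,p_{2t}(x,x)$ and a Davies--Gaffney type integrated maximum principle to recover the Gaussian decay; this is longer but introduces nothing beyond the tools already cited. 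In either route no genuinely new difficulty appears relative to the $K=0$ case of Corollary \ref{gradient-bound-nonnegative}.
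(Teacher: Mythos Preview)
Your proposal is correct and follows essentially the same route as the paper: combine a Li--Yau inequality for $K<0$ with a Gaussian bound on $\partial_t p_t$ and the upper bound of Theorem \ref{lb-negative}. The only cosmetic differences are that the paper uses the specific Li--Yau form from \cite[Theorem 1.2]{jia14} (whose coefficients blow up in $t$, forcing a split into $t\le 1$ and $t>1$ via $p_t=H_1(p_{t-1})$) rather than your $\alpha=2$ version, and that it cites Davies \cite[Theorem 4]{da97} for the time-derivative estimate rather than the analyticity/Cauchy-contour argument --- but Davies' theorem is precisely that argument, so the content is identical.
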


{ Notice that in \cite[Lemma 3.3]{mn14}, by using an elementary
argument, Mondino and Naber obtained sharp lower and upper bounds
of heat kernels $p_t(x,y)$ for $x,y\in X$ for $x,y$ satisfying $d(x,y)<10\sqrt t$, and an upper bound for
the quantity $\int_{X\setminus B(x,r)}p_t(x,y)\,d\mu(y)$. }

{In a forthcoming paper, the second named author will use a dimensional free Harnack inequality (cf. \cite{Wang1997,li13}) to investigate heat kernel bounds on $RCD(K,\infty)$ spaces (cf. \cite{agmr,ags1,ags3}).}

The following result generalizes the large time behavior of heat kernels in the Riemannian manifold established
by Li \cite[Theorem 1]{li86} to the present setting with also sharp form.
\begin{thm}\label{large-time}
Let $(X,d,\mu)$ be a $RCD^\ast(0,N)$ space with $N\in \cn$ and $N\ge 2$. Let $x_0\in X$.
If there exists $\theta\in (0,\infty)$ such that $\liminf_{R\to\infty}\frac{\mu(B(x_0,R))}{R^N}=\theta$,
then for any
$x,y\in X$, it holds that
$$\lim_{t\to\infty} \mu(B(x_0,\sqrt t))p_t(x,y)=\omega(N)(4\pi)^{-N/2},$$
where $\omega(N)$ is the volume of the unit ball in $\R^N$.
\end{thm}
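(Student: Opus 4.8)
The plan is to combine the sharp two-sided Gaussian bounds from Theorem~\ref{lowerbound-nonnegative} with a scaling/compactness argument, reducing the limit $t\to\infty$ to a statement about the rescaled metric measure spaces $(X, d/\sqrt t, \mu/\mu(B(x_0,\sqrt t)), x_0)$ and their pointed measured Gromov--Hausdorff limits. First I would record that, since $N\in\cn$ and the asymptotic volume ratio $\theta$ exists and is finite and positive, the rescaled spaces have a limit along every sequence $t_j\to\infty$; by the structure theory of $RCD^*(0,N)$ spaces (in particular, the fact that a tangent cone at infinity with Euclidean volume growth of the correct order is a metric cone, and the rigidity forcing it to be $\mathbb R^N$ with Lebesgue measure suitably normalized), every such limit is $(\mathbb R^N, |\cdot|, \omega(N)^{-1}\mathcal L^N, 0)$. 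This is where the hypothesis $N\in\cn$ and $N\ge 2$, together with $\liminf = \theta \in(0,\infty)$, is used essentially.

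Next I would invoke the stability of heat kernels under pointed measured Gromov--Hausdorff convergence of $RCD^*(0,N)$ spaces: if $(X_j, d_j, \mu_j, x_0)\to (\mathbb R^N, |\cdot|, c\,\mathcal L^N, 0)$, then the heat kernels $p^{(j)}_t(x,y)$ converge locally uniformly to the Euclidean heat kernel (against the limiting measure). The parabolic scaling $p^{(X,\,d/\sqrt t,\,\cdots)}_1(x_0,x_0) = \mu(B(x_0,\sqrt t))\, p^X_t(x_0,x_0)$, valid by the scaling invariance of the heat equation, then identifies $\mu(B(x_0,\sqrt t))\,p_t(x_0,x_0)$, as $t\to\infty$ along $t_j$, with the time-$1$ Euclidean heat kernel at the origin against the normalized measure, which is exactly $\omega(N)(4\pi)^{-N/2}$. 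Since the limit is the same along every sequence, the full limit exists.

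It remains to pass from $p_t(x_0,x_0)$ to $p_t(x,y)$ for arbitrary fixed $x,y$. Here I would use the semigroup identity $p_{t}(x,y)=\int_X p_{t/2}(x,z)\,p_{t/2}(z,y)\,d\mu(z)$ together with the uniform upper bound from Theorem~\ref{lowerbound-nonnegative} to control the contribution of $z$ far from $x_0$ (say outside $B(x_0, A\sqrt t)$), and the near-diagonal continuity estimates for the heat kernel on $RCD^*(0,N)$ spaces (local Hölder continuity with scale-invariant constants, a consequence of the parabolic Harnack inequality) to show that $\mu(B(x_0,\sqrt t))\,p_t(x,y)$ and $\mu(B(x_0,\sqrt t))\,p_t(x_0,x_0)$ differ by $o(1)$: for fixed $x,y$, once $t$ is large the rescaled distances $d(x,x_0)/\sqrt t$ and $d(y,x_0)/\sqrt t$ tend to $0$, so $x,y,x_0$ are all ``near the diagonal'' at the rescaled time $1$. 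The doubling property gives $\mu(B(y,\sqrt t))/\mu(B(x_0,\sqrt t))\to 1$ as well. Combining these, the limit for $p_t(x,y)$ coincides with that for $p_t(x_0,x_0)$.

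The main obstacle I anticipate is the rigidity step: showing that the asymptotic volume ratio hypothesis genuinely forces the blow-down to be flat $\mathbb R^N$ (not merely a cone, and not a lower-dimensional space), and that heat kernels are stable under the relevant convergence with enough uniformity to extract a pointwise limit rather than just a limit in a weak topology. If a clean ``heat kernel stability under pmGH convergence'' statement is not directly available in the cited literature, I would instead argue more hands-on: use the sharp bounds \eqref{kernel} to get that $\mu(B(x_0,\sqrt t))\,p_t(x_0,x_0)$ is bounded above and below, extract a convergent subsequence, and identify the limit by testing the heat equation on the rescaled spaces against compactly supported functions, using Mosco convergence of the associated Cheeger energies (which is known under pmGH convergence of $RCD^*(K,N)$ spaces) to conclude that the limit object solves the Euclidean heat equation with the right initial datum, hence equals the Euclidean heat kernel.
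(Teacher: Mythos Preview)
Your blow-down approach contains a genuine error in the rigidity step. You claim the rescaled spaces $(X, d/\sqrt{t}, \mu/\mu(B(x_0,\sqrt{t})), x_0)$ converge to $(\mathbb{R}^N, |\cdot|, \omega(N)^{-1}\mathcal{L}^N, 0)$, invoking ``rigidity forcing it to be $\mathbb{R}^N$.'' This is false: the hypothesis is only $\theta = \liminf_{R\to\infty}\mu(B(x_0,R))/R^N \in (0,\infty)$, not $\theta = \omega(N)$. By Bishop--Gromov the ratio $\mu(B(x_0,R))/R^N$ is non-increasing, so the limit exists, but for $\theta < \omega(N)$ the tangent cone at infinity is a general metric cone $C(Y)$, not $\mathbb{R}^N$. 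A two-dimensional Euclidean cone of total angle $\alpha < 2\pi$ already shows this: it is $RCD^\ast(0,2)$, self-similar under blow-down, and has $\theta = \alpha/2 < \pi = \omega(2)$. Your fallback via Mosco convergence does not help, since it would only identify the limit with the heat kernel on the tangent cone, not on $\mathbb{R}^N$. To rescue the strategy you would have to prove that for \emph{every} tangent cone $C$ at infinity one has $\mu_C(B(o,1))\, p^C_1(o,o) = \omega(N)(4\pi)^{-N/2}$ at the vertex $o$; this happens to be true in the $2$-dimensional cone example by direct computation, but in general it is essentially the theorem itself specialized to cones, and the cones arising as blow-downs of $RCD^\ast(0,N)$ spaces need not be tractable. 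You correctly flagged rigidity as the main obstacle, but it is not merely a technical point to be filled in---it is simply false as stated.

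The paper takes a completely different, more direct route that avoids pmGH compactness and cone structure entirely. It first imports from \cite{jia14} the existence of the limit $\lim_{t\to\infty} t^{N/2} p_t(x,y) = C(\theta)$, reducing the problem to identifying $\theta C(\theta)$. The lower bound $\theta C(\theta) \ge \omega(N)(4\pi)^{-N/2}$ comes from comparing $\int_{B(x,r)} p_t(x,y)\,d\mu(y)$ with the corresponding Euclidean integral via the Laplacian comparison principle (Lemma~\ref{lap-comp}) and the parabolic maximum principle, then letting $t\to\infty$ and $r\to\infty$. The matching upper bound is obtained by applying the parabolic Harnack inequality to $p_t(x,x)$ versus $p_{(1+\delta)t}(x,y)^2$, integrating over spheres using the boundary-measure lower bound $s(x,r)\ge N\theta r^{N-1}$ (Lemma~\ref{lowerboundary}) and the co-area decomposition (Lemma~\ref{co-area}), recognizing a Gaussian integral, and sending the auxiliary parameter $\delta\to\infty$. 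The integer hypothesis $N\in\mathbb{N}$ enters only through the explicit comparison with the heat kernel on $\mathbb{R}^N$, not through any rigidity.
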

{According to Xu \cite{Xu14}, on
an $N$-dimensional Riemannian manifold with non-negative Ricci curvature, if the volume is not maximal growth
(i.e., $\liminf_{R\to\infty}\frac{\mu(B(x_0,R))}{R^N}=0$),
then the limit of $\mu(B(x_0,\sqrt t))p_t(x,y)$ as $t\to\infty$ does not necessarily exist. }


For applications of the bounds on the heat kernel and its gradient, we shall consider stability of solutions
to the heat equation, and the boundedness of (local) Riesz transforms.

The paper is organized as follows. In Section 2,  we give some basic notations
and notions for  Sobolev spaces, differential structures,  curvature-dimension conditions and
heat kernels, and recall some known results. In Section 3, we will provide the proofs of heat kernel and its gradient estimates.

In Section 4, we will prove Theorem \ref{large-time}. Stability of solutions
to the heat equation will be studied there as well. Notice that, since the approach
depends on the comparison results between a $RCD^\ast(0,N)$ space and the Euclidean space $\rr^N$, we can only obtain the result when $N\in \cn$ and $N\ge 2$.
The arguments essentially follows from Li \cite{li86} with some necessary modifications to our non-smooth context, since e.g. there seems no effective  Gauss--Green type formula available so far.

In Section 5, we will establish the boundedness of the Riesz transform $|\nabla (-\Delta)^{1/2}|$ and its local version on $L^p(X)$ for all $p\in (1,\infty)$, where $\Delta$ is the Laplacian (see Definition \ref{glap} below). The approach follows from the known one in Riemannian manifolds. In the smooth setting, since the Riesz transform of smooth functions with compact supports is well defined, and this class of functions is dense in $L^p(X)$ for each $p\in (1,\infty)$, one only needs to deal with smooth functions with compact support. However, in our non-smooth setting, by applying
the results from \cite{cd99,acdh04}, the main issue left should be to find a suitable acting class for the Riesz transform.


We should mention that, all the results we get generalize the known ones in the Riemannian manifold with Ricci curvature bounded below,
and hold in the Alexandrov space with Ricci curvature bounded from below; see \cite{zz10}.

Finally, we make some conventions on notation. Throughout the work, we denote
by $C,c$ positive constants which are independent of the main
parameters, but which may vary from line to line. The symbol
$B(x,R)$ denotes an open ball with center $x$ and radius $R$ with respect to the distance $d$, and $CB(x,R)=B(x,CR).$
The space $\Lip(X)$ denotes the set of all Lipschitz functions
on $X$.

\section{Preliminaries}
\hskip\parindent In this section, we recall some basic notions and several auxiliary results.
\subsection{Sobolev spaces and Differential structures}
\hskip\parindent
Let $(X,d)$ be a complete and separable metric space and let $C([0, 1],X)$ be the space of continuous curves on $[0, 1]$ with values in $X$ equipped with the sup norm. For $t\in [0, 1]$, the map $e_t :C([0, 1],X) \to X$ is the evaluation at
time $t$ defined by
$$e_t(\gz):=\gz_t.$$

A curve $\gz: [0,1] \to X$ is in the absolutely continuous
class $AC^q([0,1],X)$ for some $q\in [1,\infty]$, if there exists $g\in L^q([0,1])$ such that, \begin{eqnarray}\label{ac}
d(\gamma_s,\gamma_t)\leq \int_s^t g(r)\,d r,\quad\mbox{for any }s,t\in [0,1]\mbox{ satisfying } s<t.
\end{eqnarray}
It is true that, if $\gamma\in AC^p([0,1];X)$, then the metric slope
$$\lim_{\delta\rightarrow 0}\frac{d(\gamma_{r+\delta},\gamma_r)}{|\delta|},$$
denoted by $|\dot{\gamma}_r|$, exists for $L^1$-a.e. $r\in [0,1]$, belongs to $L^p([0,1])$, and it is the minimal function $g$ such that \eqref{ac} holds (see Theorem 1.1.2 in \cite{AmbrosioGigliSavare2005}). The length of the absolutely continuous curve $\gamma: [0,1]\rightarrow X$ is defined by $\int_0^1 |\dot{\gamma}_r|\,d r$. We call that $(X,d)$ is a length space if
$$d(x_0,x_1)=\inf\left\{\int_0^1 |\dot{\gamma}_r|\,d r:\, \gamma\in AC^1([0,1],X),\, \gamma_i=x_i,\,i=0,1 \right\},\quad \forall\, x_0,x_1\in X.$$

Let $\mu$ be a locally finite (i.e., finite on bounded sets) Borel regular measure on $(X,d)$ with support the whole space $X$. Throughout the work, we call the triple $(X,d,\mu)$ the metric measure space.

\begin{defn}[Test Plan]
Let $\uppi\in {\mathcal{P}}(C([0, 1],X))$.
We say that $\uppi$ is a test plan if there exists a constant $C>0$ such that
$$(e_t)_\sharp{ \uppi}\le C\mu,\quad\mbox{for all }t\in [0,1],$$
and
$$\int \int_0^1|\dot{\gamma}_t|^2\,dt\,d{\mathcal \uppi}(\gamma)<\infty.$$
  \end{defn}

\begin{defn}[Sobolev Space] \label{sobolev} The Sobolev class $S^2(X)$ (resp. $S_{\mathrm{loc}}^2(X)$)
is the space of all Borel functions $f: X\to \rr$, for which there exists a non-negative function
$G\in L^2(X)$ (resp. $G\in L^2_{\mathrm{loc}}(X)$) such that, for each test plan $\uppi$, it holds
\begin{equation}\label{curve-sobolev}
\int |f(\gz_1)-f(\gz_0)|\,d{\mathcal \uppi}(\gamma)\le \int \int_0^1 G(\gz_t)|\dot{\gamma}_t|\,dt\,d{\mathcal \uppi}(\gamma).
\end{equation}
\end{defn}
It then follows from a compactness argument that,  for each  $f\in S^2(X)$ there exists a unique minimal $G$
in the $\mu$-a.e. sense such that \eqref{curve-sobolev} holds. We then denote the minimal $G$ by $|\nabla  f|$
and call it the minimal weak upper gradient following \cite{ags4}.

The inhomogeneous Sobolev space $W^{1,2}(X)$ is defined as  $S^2(X)\cap L^2(X)$ equipped with the norm
$$\|f\|_{W^{1,2}(X)}:=\lf(\|f\|_{L^2}^2+\| |\nabla  f|\|_{L^2(X)}^2\r)^{1/2}.$$

The local Sobolev space $W^{1,2}_{\mathrm{loc}}(\Omega)$ for an open set $\Omega\subset X$,  and the Sobolev space with compact support
$W^{1,2}_{c}(X)$ can be defined in an obvious manner. The relevant Sobolev spaces have been studied in
\cite{ags4,ch,sh}.

The following terminologies and results are mainly taken from \cite{ags3,gi12}.
\begin{defn}[Infinitesimally Hilbertian Space] Let $(X, d,\mu)$ be a metric measure
space. We say that it is infinitesimally Hilbertian, provided $W^{1,2}(X)$ is a Hilbert space.
\end{defn}
Notice that, from the definition, it follows that $(X, d,\mu)$ is infinitesimally Hilbertian if and only if,
for any $f,g\in S^2(X)$, it holds
$$\||\nabla (f+g)|\|_{L^2(X)}^2+\||\nabla (f-g)|\|_{L^2(X)}^2=2\lf(\||\nabla f|\|_{L^2(X)}^2+\||\nabla g|\|_{L^2(X)}^2\r).$$

\begin{defn} Let $(X, d,\mu)$ be an infinitesimally Hilbertian space, $\Omega\subset X$ an open set and $f, g\in S^2_{\mathrm{loc}}(\Omega)$.
The map $\la \nabla f, \nabla g\ra :\, \Omega \to \rr$ is $\mu$-a.e. defined as
$$\la \nabla f, \nabla g\ra:= \inf_{\ez>0} \frac{|\nabla (g+\ez f)|^2-|\nabla g|^2}{2\ez},$$
with the infimum being intended as $\mu$-essential infimum.
\end{defn}

We shall sometimes write $\la \nabla f, \nabla g\ra$ as $\nabla f\cdot\nabla g$ for convenience.
The inner product $\la \nabla f, \nabla g\ra$ is linear, and satisfies the Cauchy--Schwarz inequality,
the chain rule and the Leibniz rule (see e.g. \cite{gi12}).

With the aid of the inner product, we can define the Laplacian operator as below. Notice that the Laplacian operator is linear
due to $(X, d,\mu)$ being infinitesimally Hilbertian.

\begin{defn}[Laplacian] \label{glap}
Let $(X, d,\mu)$ be an infinitesimally Hilbertian space.
Let $f\in W^{1,2}_{\mathrm{loc}}(X)$. We call $f\in {\mathcal D}_{\mathrm{loc}}(\Delta)$, if there exists $h\in L^1_{\mathrm{loc}}(X)$ such that, for each $\psi\in W^{1,2}_{c}(X)$, it holds
\begin{equation*}
\int_X\la \nabla f,\nabla\psi\ra\,d\mu=-\int_X h\psi\,d\mu.
\end{equation*}
We will write $\Delta f=h$. If $f\in W^{1,2}(X)$ and $h\in L^2(X)$, we then call $f\in {\mathcal D}(\Delta)$.
\end{defn}

From the Leibniz rule, it follows that if $f,g\in {\mathcal D}_{\mathrm{loc}}(\Delta)\cap L^\infty_{\mathrm{loc}}(X)$ (resp. $f,g\in {\mathcal D}(\Delta)\cap L^\infty_{\mathrm{loc}}(X)\cap \Lip(X)$), then $fg\in {\mathcal D}_{\mathrm{loc}}(\Delta)$ (resp. $fg\in {\mathcal D}(\Delta)$) satisfies
$\Delta (fg)=g\Delta f+f\Delta g+2\nabla f\cdot\nabla g$.

\subsection{Curvature-dimension conditions and consequences}
\hskip\parindent Let $(X,d,\mu)$ be  an infinitesimally Hilbertian space. Denote by $H_t$  the heat flow $e^{t\Delta}$ corresponding to
 the Dirichlet form $\big(\mathcal{E},W^{1,2}(X)\big)$, defined by
$$\mathcal{E}(f,g)=\int_X \la \nabla f,\nabla g \ra\, d\mu,\quad f,g\in W^{1,2}(X).$$
From $(X,d,\mu)$ being infinitesimally Hilbertian, it follows
that $H_t$ is linear and the Dirichlet form  $\big(\mathcal{E},W^{1,2}(X)\big)$ is strongly local.

Now we recall the definition of $RCD^\ast(K,N)$ spaces. See e.g. \cite[Sections 3 and 4]{eks13} for other equivalent characterizations. Here and in what follows, for $K=0$, $K/(e^{2Kt}-1):=\lim_{K\rightarrow 0}K/(e^{2Kt}-1)=1/(2t)$.
\begin{defn}\label{rcd}
Let $K\in \rr$ and $N\in [1,\infty)$, and let $(X,d,\mu)$ be a length, infinitesimally Hilbertian space satisfying:
\begin{itemize}
\item[(a)] for some constants $C,c>0$ and some point $o\in X$,
$$\mu(B(o,r))\leq Ce^{cr^2},\quad\mbox{for every }r>0,$$
\item[(b)] each function $f\in W^{1,2}(X)$ with $|\nabla f|\leq 1$ admits a continuous representative,
\item[(c)] for every $f\in W^{1,2}(X)$ and every $t>0$,
\begin{equation}\label{defRCD}
|\nabla H_tf|^2+\frac{4Kt^2}{N(e^{2Kt}-1)}|\Delta H_tf|^2\le e^{-2Kt}H_t(|\nabla f|^2),\quad\mu\mbox{-a.e. in }X.
\end{equation}
\end{itemize}
Then we call $(X,d,\mu)$ a $RCD^\ast(K,N)$ space.
\end{defn}

On the $RCD^\ast(K,N)$ space $(X,d,\mu)$, the measure $\mu$ satisfies the local doubling (global doubling, provided $K\geq 0$) property, which we present in the next lemma (see e.g. \cite[Section 3]{eks13}).
\begin{lem}\label{doubling}
Let $(X,d,\mu)$ be a $RCD^\ast(K,N)$ space with $K\leq 0$ and $N\in [1,\infty)$, and let $x\in X$ and $0<r\leq R<\infty$.
\begin{itemize}
\item[(i)] If $K=0$, then
$$\mu\big(B(x,R)\big)\leq \left(\frac{R}{r}\right)^{N} \mu\big(B(x,r)\big).$$

\item[(ii)] If $K<0$, then
$$\mu\big(B(x,R)\big)\leq \frac{l_{K,N}(R)}{l_{K,N}(r)} \mu\big(B(x,r)\big),$$
where $(0,\infty)\ni t\mapsto l_{K,N}(t)$ is a continuous function depending on $K$ and $N$, and $l_{K,N}(t)=O(e^{tC(K,N)})$ as $t$ tends to $\infty$ for some constant $C(K,N)$ depending on $K$ and $N$.
\end{itemize}
\end{lem}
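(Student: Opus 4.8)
The plan is to prove Lemma \ref{doubling} as a consequence of the Bishop--Gromov volume comparison theorem, which is known to hold on $RCD^\ast(K,N)$ spaces (this is part of the equivalence package established in \cite{eks13}, and one can also invoke the measure-contraction property $MCP(K,N)$ which follows from $RCD^\ast(K,N)$). Concretely, Bishop--Gromov comparison asserts that for every $x\in X$ the function
\begin{equation*}
r\mapsto \frac{\mu(B(x,r))}{v_{K,N}(r)}
\end{equation*}
is non-increasing on $(0,\infty)$, where $v_{K,N}(r):=\int_0^r s_{K,N}(t)^{N-1}\,dt$ is the volume of the ball of radius $r$ in the $N$-dimensional model space of constant curvature, and $s_{K,N}$ is the usual Jacobian comparison function (for $K=0$, $v_{0,N}(r)=\omega(N)r^N$; for $K<0$, $s_{K,N}(t)=\sinh(\sqrt{-K/(N-1)}\,t)/\sqrt{-K/(N-1)}$ when $N>1$, with the obvious modification for $N=1$). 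Monotonicity gives, for $0<r\le R$,
\begin{equation*}
\mu(B(x,R))\le \frac{v_{K,N}(R)}{v_{K,N}(r)}\,\mu(B(x,r)),
\end{equation*}
so the whole task reduces to analyzing the elementary ratio $l_{K,N}:=v_{K,N}$.

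For part (i), when $K=0$ this is immediate: $v_{0,N}(R)/v_{0,N}(r)=(R/r)^N$, which is exactly the claimed inequality. For part (ii), when $K<0$ I would simply set $l_{K,N}(t):=v_{K,N}(t)$, note that it is continuous (indeed smooth) and positive on $(0,\infty)$ as an integral of a positive continuous integrand, and verify the growth rate: since $s_{K,N}(t)\sim c\,e^{\sqrt{-K/(N-1)}\,t}$ as $t\to\infty$, one has $s_{K,N}(t)^{N-1}=O(e^{\sqrt{-K(N-1)}\,t})$ and hence $v_{K,N}(t)=\int_0^t s_{K,N}(s)^{N-1}\,ds=O(e^{C(K,N)t})$ with $C(K,N)=\sqrt{-K(N-1)}$ (for $N=1$ the model function is linear and one instead uses $l_{K,1}(t)=t$, which trivially satisfies the bound). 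This is all routine calculus on explicit hyperbolic functions.

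The only genuine content, and hence the only place requiring care, is the invocation of Bishop--Gromov on $RCD^\ast(K,N)$ spaces: one must cite precisely where in \cite{eks13} (or in \cite{stm5}, via the reduced curvature-dimension condition $CD^\ast(K,N)$, or via $MCP(K,N)$ as in \cite{ohta07}/\cite{stm5}) the volume comparison is derived, since the subtlety is that $RCD^\ast(K,N)$ is defined here through the Bakry--\'Emery type inequality \eqref{defRCD} rather than through optimal transport, and the passage to volume growth goes through the nontrivial equivalence between these formulations. Once that citation is in place the rest is purely the arithmetic of the model spaces. I would therefore structure the proof as: (1) recall Bishop--Gromov / monotonicity of $r\mapsto \mu(B(x,r))/v_{K,N}(r)$ on $RCD^\ast(K,N)$ spaces with a reference; (2) deduce the ratio bound with $l_{K,N}=v_{K,N}$; (3) in the case $K=0$ compute $v_{0,N}(R)/v_{0,N}(r)=(R/r)^N$; (4) in the case $K<0$ record continuity and the exponential growth estimate for $v_{K,N}$. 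Steps (2)--(4) are a few lines each; the expected obstacle is nothing more than pinning down the exact form of the comparison statement available in the non-smooth literature.
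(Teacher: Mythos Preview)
Your proposal is correct and matches the paper's treatment: the paper does not prove this lemma but simply records it with a reference to \cite[Section 3]{eks13}, where the Bishop--Gromov volume comparison on $RCD^\ast(K,N)$ spaces (via the equivalence with the reduced curvature-dimension condition) is established. Your sketch---invoke monotonicity of $r\mapsto \mu(B(x,r))/v_{K,N}(r)$, set $l_{K,N}=v_{K,N}$, and read off the two cases---is exactly the content behind that citation, and your caveat about pinning down the precise reference (since Definition~\ref{rcd} is stated in Bakry--\'Emery form) is well taken.
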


From the definition of the $RCD^\ast(K,N)$ space, we know that $(X,d)$ is a length space. The (local) doubling property immediately implies that every bounded closed ball in $(X,d)$ is totally bounded. Since $(X,d)$ is also complete, it is then proper and geodesic. Recall that a metric space $(X,d)$ is proper if every bounded closed subset is compact. The properness also implies that the Dirichlet form $(\mathcal{E},W^{1,2}(X))$ is indeed regular.

By \cite[Theorem 3.9]{ags3}, we see that the intrinsic metric induced by the Dirichlet form $(\mathcal{E}, W^{1,2}(X))$, defined as
$$d_{\mathcal{E}}(x,y)=\sup\{\psi(x)-\psi(y):\, \psi\in W^{1,2}(X)\cap C(X),\, |\nabla\psi|\leq 1\, \mu\mbox{-a.e. in }X\},$$
for every $x,y\in X$, coincides with the original one, i.e.,
$$d_{\mathcal{E}}(x,y)=d(x,y),\quad \forall\, x,y\in X.$$
Hence, we can work indifferently with either the distance $d$ or $d_{\mathcal{E}}$.

Recently, Rajala \cite{raj1} proved that an $L^1$ weak local Poincar\'{e} inequality holds on $RCD^\ast(K,N)$ spaces, and hence also an $L^1$ strong local Poincar\'{e} inequality holds by the doubling and geodesic properties and by applying \cite[Theorem 1]{HajlaszKoskela1995}. By using the H\"older inequality, we know that the $L^p$ weak local Poincar\'{e} inequality holds for all $p\in (1,\infty)$.
\begin{lem}\label{localPoicare}
Let $(X,d,\mu)$ be a $RCD^\ast(K,N)$ space with $K\leq 0$ and $N\in [1,\infty)$. Then for every $x\in X$ and every $R>0$, there exists a positive constant $C:=C(K,N,R)$ such that for any $r\in (0,R)$,
\begin{eqnarray}\label{localstrongPoincare}
\int_{B(x,r)} |f-f_B|^2\, d\mu\leq Cr^2\int_{B(x,r)}|\nabla f|^2\, d\mu,\quad\mbox{for all }f\in W^{1,2}(X),
\end{eqnarray}
where $f_B=\frac{1}{\mu(B(x,r))}\int_{B(x,r)} f\, d\mu$. In particular, if $K=0$, then \eqref{localstrongPoincare} holds with constant $C:=C(K,N)$ independent of $R$.
\end{lem}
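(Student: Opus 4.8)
The plan is to derive \eqref{localstrongPoincare} from three standard facts about Poincar\'e inequalities in doubling metric measure spaces, the only space-specific inputs being Rajala's theorem and the doubling property of Lemma \ref{doubling}. \emph{Step 1.} By \cite{raj1}, the $RCD^\ast(K,N)$ space $(X,d,\mu)$ supports a weak $(1,1)$-Poincar\'e inequality: there are $\lambda\ge 1$ and, for each $x\in X$ and $R>0$, a constant $C=C(K,N,R)$ with
\[
\frac1{\mu(B(x,r))}\int_{B(x,r)}|f-f_B|\,d\mu\le Cr\,\frac1{\mu(B(x,\lambda r))}\int_{B(x,\lambda r)}|\nabla f|\,d\mu,\qquad 0<r<R,
\]
first for $f\in\Lip(X)$, hence for all $f\in W^{1,2}(X)$ by density together with the $L^2$-lower semicontinuity of $f\mapsto|\nabla f|$.

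\emph{Step 2.} By Lemma \ref{doubling}, $\mu$ is doubling on balls of radius at most $R$. First, H\"older's inequality applied to the gradient term upgrades the weak $(1,1)$-inequality to a weak $(1,2)$-inequality; next, the Sobolev--Poincar\'e self-improvement of the function-side exponent valid in doubling spaces (\cite{HajlaszKoskela1995}) turns it into a weak $(2,2)$-inequality
\[
\Big(\frac1{\mu(B(x,r))}\int_{B(x,r)}|f-f_B|^2\,d\mu\Big)^{1/2}\le Cr\Big(\frac1{\mu(B(x,\lambda' r))}\int_{B(x,\lambda' r)}|\nabla f|^2\,d\mu\Big)^{1/2}
\]
for some $\lambda'\ge 1$; finally, since $(X,d)$ is a complete, proper length space, hence geodesic (as noted after Definition \ref{rcd}), the chaining argument of Haj\l asz--Koskela \cite[Theorem 1]{HajlaszKoskela1995} removes the dilation, i.e.\ permits $\lambda'=1$. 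Multiplying through by $\mu(B(x,r))$ gives \eqref{localstrongPoincare}.

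\emph{Step 3 (bookkeeping of constants).} When $K<0$, Lemma \ref{doubling}(ii) bounds the doubling constant on balls of radius $\le R$ in terms of $l_{K,N}$, so every constant produced above depends only on $K,N,R$. When $K=0$, Lemma \ref{doubling}(i) supplies the scale-invariant doubling constant $2^N$, and then Rajala's inequality, the exponent self-improvement, and the Haj\l asz--Koskela chaining all produce constants depending only on $N$, which is the final assertion of the lemma. I expect this last point to require the most care: one must verify that the geometric-type sum appearing in the chaining step is governed purely by the (global, when $K=0$) doubling constant, so that no hidden dependence on $R$ enters; the remaining steps are a matter of quoting the cited results.
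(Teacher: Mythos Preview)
Your proposal is correct and follows essentially the same route as the paper: the paper does not give a separate proof but states the lemma as an immediate consequence of Rajala's weak $L^1$-Poincar\'e inequality \cite{raj1}, the doubling and geodesic properties, and \cite[Theorem 1]{HajlaszKoskela1995}, together with H\"older's inequality to pass to the $L^2$ gradient. Your Step~2 is simply more explicit about the self-improvement of the left-hand exponent, which the paper's one-sentence justification leaves implicit.
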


Now we can apply the results from Sturm \cite[Proposition 2.3]{st2} to immediately deduce that there exists a heat kernel, i.e., a measurable map $(0,\infty)\times X\times X \ni (t,x,y)\mapsto p_t(x,y)\in [0,\infty)$ such that, for any $ t>0$, $f\in L^1(X)+L^\infty(X)$ and  each  $x\in X$,
$$H_tf(x)=\int_X f(y)p_t(x,y)\,d\mu(y);$$
for all $s,t>0$ and  all $x,y\in X$,
$$p_{t+s}(x,y)=\int_X p_t(x,z)p_s(z,y)\,d\mu(z);$$
the function $u:(t,y)\mapsto p_t(x,y)$ is a solution of the equation $\Delta u=\frac{\partial}{\partial t}u$ on $(0,\infty)\times X$ in the weak sense. By the symmetry of the semi-group, $p_t$ is also symmetric, i.e., for every $t>0$, $p_t(x,y)=p_t(y,x)$ for all $(x,y)\in X\times X$. {The doubling property and the local $L^2$ Poincar\'{e} inequality imply that the function $x\mapsto p_t(x,y)$ is H\"{o}lder continuous for every $(t,y)\in (0,\infty)\times X$, by a standard argument; see e.g. \cite[Section 3]{st3}.} Moreover, $H_t$ is stochastically complete (see e.g. \cite[Theorem 4]{stm1}), i.e.,
$$\int_Xp_t(x,y)\, d\mu(y)=1,\quad{\forall }t>0\mbox{ and }\ {\forall }x\in X.$$

\section{Heat kernel bounds}
\hskip\parindent In this section, we shall prove the main results on the heat kernel bounds. We shall  follow the approach of Sturm \cite{st1},
 by applying the Laplacian comparison principle established by Gigli \cite{gi12} (see Lemma \ref{lap-comp} below)
and the parabolic Harnack inequality in \cite{gm14,jia14} as our main tools.

 In what follows, let $d_{x_0}(x):=d(x_0,x)$ be the distance function for each $x_0$.
\begin{lem}\label{lap-comp}
Let $(X,d,\mu)$ be a $RCD^\ast(K,N)$ space with $K\le 0$ and $N\in [1,\infty)$.
Then the distance function $d_{x_0}\in \mathcal D_\loc(\Delta,\,X\setminus{x_0})$, and
$$\Delta d_{x_0}|_{X\setminus{x_0}}\le \frac{N\tau_{K,N}(d_{x_0})-1}{d_{x_0}}.$$
Above, $\tau_{K,N}\equiv 1$ if $K=0$, and
$\tau_{K,N}(\theta)=\theta\sqrt{ {-K}/N}{\mathrm{cotanh}}\lf(\theta \sqrt{{-K}/N}\r) $ for $\theta\in [0,\infty)$.
\end{lem}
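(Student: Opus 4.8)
The plan is to establish the Laplacian comparison for the distance function by combining the Bochner inequality (equivalently, the $BE(K,N)$ condition, which holds on $RCD^\ast(K,N)$ spaces) with the well-developed calculus for functions in $\mathcal{D}_{\mathrm{loc}}(\Delta)$ on these spaces, following Gigli's approach in \cite{gi12}. The eikonal equation $|\nabla d_{x_0}| = 1$ holds $\mu$-a.e. on $X\setminus\{x_0\}$ (indeed everywhere away from $x_0$, since $(X,d)$ is a geodesic space and $d$ coincides with the intrinsic distance $d_{\mathcal E}$), so $d_{x_0}$ is locally Lipschitz and lies in $W^{1,2}_{\mathrm{loc}}(X\setminus\{x_0\})$. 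The substance of the statement is two-fold: first, that $d_{x_0}$ actually lies in $\mathcal{D}_{\mathrm{loc}}(\Delta, X\setminus\{x_0\})$, i.e.\ its distributional Laplacian is (represented by) an $L^1_{\mathrm{loc}}$ function rather than merely a signed measure; and second, the explicit pointwise upper bound by $(N\tau_{K,N}(d_{x_0})-1)/d_{x_0}$.

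The key steps I would carry out, in order: (1) Show $\Delta d_{x_0}$ exists as a Radon measure on $X\setminus\{x_0\}$ whose negative part is absolutely continuous, using the $BE(K,N)$/Bochner inequality applied to $d_{x_0}$; the point is that from $\Gamma_2(d_{x_0}) \ge K|\nabla d_{x_0}|^2 + \frac{1}{N}(\Delta d_{x_0})^2$ together with $|\nabla d_{x_0}|=1$ one extracts that $(\Delta d_{x_0})^-$ is controlled, and one uses the second-order differential structure from \cite{gi12} (the Hessian and the improved Bochner inequality) to identify the singular part of $\Delta d_{x_0}$ as non-negative, so that the Laplacian comparison becomes a one-sided inequality provable with the absolutely continuous part. (2) Reduce to a one-dimensional ODE comparison: along geodesics emanating from $x_0$, the function $\tau_{K,N}$ arises exactly as the solution of the Riccati-type comparison for the model space (the $N$-dimensional space form of curvature $K/N$, respectively $\mathbb{R}^N$ when $K=0$), so the bound $N\tau_{K,N}(r)/r - 1/r$ is precisely the Laplacian of the distance function in the model. (3) Transfer this model computation to $(X,d,\mu)$ via the $BE(K,N)$ inequality: the standard trick is to test against $\varphi(d_{x_0})$ for suitable smooth $\varphi \ge 0$, integrate by parts, apply Bochner, and optimize over $\varphi$; this is the Gigli--Cavalletti--Mondino style argument and it yields the claimed pointwise (in fact $\mu$-a.e., then upgraded to everywhere by continuity of $\tau_{K,N}(d_{x_0})/d_{x_0}$) bound.

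The main obstacle I expect is Step (1): controlling the singular part of the measure $\Delta d_{x_0}$, i.e.\ proving it is non-negative so that discarding it only weakens the inequality in the right direction, and simultaneously proving the absolutely-continuous part is genuinely $L^1_{\mathrm{loc}}$ on $X\setminus\{x_0\}$ (this is where membership in $\mathcal{D}_{\mathrm{loc}}(\Delta)$ rather than just "distributional Laplacian is a measure" is delicate). In the smooth Riemannian setting this is handled by the fact that the cut locus has measure zero and $d_{x_0}$ is smooth off the cut locus with a distributional Laplacian whose singular part (supported on the cut locus) is $\le 0$ — but here one must instead invoke the non-smooth second-order calculus: the existence of the Hessian $\mathrm{Hess}\, d_{x_0}$ in the sense of \cite{gi12}, the self-improved Bochner inequality with the Hessian term, and the structural results on the measure-valued Laplacian. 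A clean route is to cite the relevant structural results from \cite{gi12} directly (and possibly the Laplacian comparison theorem there, if stated in exactly this generality), then perform the model-space ODE comparison by hand; the remaining calculations — verifying that $\tau_{K,N}$ solves the right Riccati equation and that the test-function optimization reproduces the bound — are routine and I would not grind through them here.
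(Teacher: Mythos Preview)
The paper does not prove this lemma; it is quoted verbatim from Gigli \cite{gi12}, where it is established by optimal-transport methods (displacement convexity of the entropy along $W_2$-geodesics from a small ball around $x_0$, which directly produces the measure-valued Laplacian bound). So the ``paper's proof'' is a citation, and the comparison you should be making is to Gigli's argument, not to anything in this paper.

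Your proposed route via the Bochner/$BE(K,N)$ inequality is genuinely different from Gigli's, and as sketched it has a circularity problem. The Bochner inequality on $RCD^\ast(K,N)$ spaces is formulated for test functions already in (an iterated) domain of $\Delta$; to write $\Gamma_2(d_{x_0}) \ge K + \tfrac{1}{N}(\Delta d_{x_0})^2$ you must first know that $\Delta d_{x_0}$ exists in the relevant sense, which is precisely the content of the lemma. The second-order calculus you invoke (Hessian, improved Bochner) likewise presupposes membership in the test class or approximation by it, and $d_{x_0}$ is not a priori in that class. Gigli's proof avoids this by never touching Bochner: he works directly with the $CD^\ast(K,N)$ displacement-convexity inequality, which only needs $d_{x_0}$ as a Kantorovich potential, and \emph{produces} the Laplacian (as a measure) together with the comparison bound in one stroke. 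Your Step (2)--(3) outline (Riccati comparison, test against $\varphi(d_{x_0})$) is morally the right endgame, but the engine that gets you there is optimal transport, not $BE(K,N)$.

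One further point you correctly flag but may be underestimating: Gigli's result gives a \emph{measure-valued} Laplacian with a possible singular (non-positive) part, whereas Definition~\ref{glap} in this paper literally asks for $h\in L^1_{\mathrm{loc}}$. The lemma as stated is a slight abuse --- it should be read in Gigli's measure sense, with the inequality holding as measures --- and for every application in this paper (the Sturm-type arguments in the proofs of Theorems~\ref{lowerbound-nonnegative} and \ref{lb-negative}) the measure inequality is all that is needed. So do not spend effort trying to upgrade to $L^1_{\mathrm{loc}}$; that is neither what Gigli proves nor what the paper actually uses.
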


The following parabolic Harnack inequalities are established in \cite{gm14,jia14}.
\begin{lem}[Parabolic Harnack Inequality]\label{harnack-old}
Let $(X,d,\mu)$ be a $RCD^\ast(K,N)$ space with $K\in \rr$ and $N\in [1,\infty)$.
Then for each $0\le f\in \cup_{1\le q<\infty}L^q(X)$, all $0<s<t<\infty$ and $x,y\in X$, it holds that

{\rm (i)} if $K=0$,
$$H_sf(x)\le  H_tf(y)\exp\lf\{\frac{d(x,y)^2}{4(t-s)}\r\}\lf(\frac t s\r)^{N/2};$$

{\rm (ii)} if $K<0$,
$$H_sf(x)\le H_tf(y)\exp\lf\{\frac{d(x,y)^2}{4(t-s)e^{2Kt/3}}\r\}\lf(\frac{1-e^{2Kt/3}}{1-e^{2Ks/3}}\r)^{N/2}.$$
\end{lem}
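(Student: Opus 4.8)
The plan is to derive both Harnack inequalities from the sharp Li--Yau differential Harnack estimate for positive solutions of the heat equation, which is the genuine analytic input (it is a consequence of the Bochner/$\Gamma_2$ inequality equivalent to \eqref{defRCD} and is available in \cite{gm14,jia14}), and then to integrate it along a space--time geodesic so as to produce the exact constants claimed. First I would reduce to positive solutions: if $f\equiv 0$ the claim is trivial, while if $f\ge 0$ with $f\not\equiv 0$ then strict positivity of the heat kernel gives $u:=H_\tau f>0$ for every $\tau>0$, so $\log u$ is well defined. Writing $\beta(\tau):=e^{-2K\tau/3}$, I recall the Li--Yau inequality for $K<0$ in the form
\begin{equation*}
|\nabla\log u|^2-\beta(\tau)\,\partial_\tau\log u\le-\frac{NK}{3}\cdot\frac{\beta(\tau)}{\beta(\tau)-1},
\end{equation*}
whose $K\to 0$ limit ($\beta\equiv 1$) is $|\nabla\log u|^2-\partial_\tau\log u\le N/(2\tau)$, the case $K=0$.

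Since $(X,d)$ is proper and geodesic, I fix a constant--speed minimizing geodesic $\gamma:[s,t]\to X$ from $x$ to $y$, so that $|\dot\gamma_\tau|\equiv d(x,y)/(t-s)$, and I study $\phi(\tau):=\log u(\gamma_\tau,\tau)$. Differentiating and using the Cauchy--Schwarz inequality gives
\begin{equation*}
\phi'(\tau)=\la\nabla\log u,\dot\gamma_\tau\ra+\partial_\tau\log u\ge\partial_\tau\log u-|\nabla\log u|\,|\dot\gamma_\tau|.
\end{equation*}
Inserting the Li--Yau bound $\partial_\tau\log u\ge\beta(\tau)^{-1}|\nabla\log u|^2+\tfrac{NK}{3(\beta(\tau)-1)}$ and minimizing the quadratic $\beta(\tau)^{-1}p^2-p\,|\dot\gamma_\tau|$ in $p=|\nabla\log u|\ge 0$ (Young's inequality), the gradient terms collapse to $-\tfrac14\beta(\tau)|\dot\gamma_\tau|^2$, leaving
\begin{equation*}
\phi'(\tau)\ge-\frac{\beta(\tau)}{4}\cdot\frac{d(x,y)^2}{(t-s)^2}+\frac{NK}{3(\beta(\tau)-1)}.
\end{equation*}

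I would then integrate this over $[s,t]$. For the spatial term, since $\beta$ is increasing when $K<0$ I bound $\beta(\tau)\le\beta(t)=e^{-2Kt/3}$, which yields exactly $-d(x,y)^2/\big(4(t-s)e^{2Kt/3}\big)$; for $K=0$ this is just $-d(x,y)^2/(4(t-s))$. For the time term I compute $\int_s^t(\beta(\tau)-1)^{-1}\,d\tau$ explicitly by the substitution $w=e^{-2K\tau/3}$ and partial fractions, and the prefactor $\tfrac{NK}{3}$ combines with the resulting $\tfrac{1}{-2K/3}$ to give precisely $-\tfrac{N}{2}\log\frac{1-e^{2Kt/3}}{1-e^{2Ks/3}}$ (respectively $-\tfrac N2\log(t/s)$ when $K=0$). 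Exponentiating $\phi(s)-\phi(t)\le A$ then produces $H_sf(x)=u(x,s)\le u(y,t)\,e^{A}=H_tf(y)\,e^A$ with the stated factors, establishing (i) and (ii) simultaneously (and (i) as the $K\to 0$ specialization of (ii)).

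The main obstacle is making the differentiation of $\phi$ rigorous in the non-smooth setting: the Li--Yau inequality is an almost-everywhere-in-space statement for each fixed time, whereas $\gamma([s,t])$ has $\mu$-measure zero, so $|\nabla\log u|$ and $\partial_\tau\log u$ cannot be evaluated along $\gamma$ naively. The resolution is to pass to the representative of $u=H_\tau f$ that is locally Lipschitz in space and $C^1$ in time on $(s',t')\times X$ for $[s,t]\subset(s',t')\subset(0,\infty)$ --- regularity which the gradient/Li--Yau estimate itself supplies --- so that $\tau\mapsto u(\gamma_\tau,\tau)$ is absolutely continuous and the one-sided chain rule $\phi'(\tau)\ge\partial_\tau\log u(\gamma_\tau,\tau)-|\nabla\log u|(\gamma_\tau)\,|\dot\gamma_\tau|$ holds for a.e.\ $\tau$; here the bound $|(\log u\circ\gamma)(\tau+h)-(\log u\circ\gamma)(\tau)|\le\int_\tau^{\tau+h}|\nabla\log u|(\gamma_r)\,|\dot\gamma_r|\,dr$ is exactly the weak-upper-gradient estimate along the rectifiable curve $\gamma$ underlying the Sobolev calculus of Section 2, and one checks the pointwise Li--Yau inequality persists for the continuous representative along $\gamma$. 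Finally, to cover general $f\ge 0$ in $\cup_{1\le q<\infty}L^q(X)$, I would prove the inequality first for $f_n:=\min\{f,n\}\,\chi_{B(x_0,n)}\in L^2(X)\cap L^\infty(X)$ and then let $n\to\infty$, using $f_n\uparrow f$, monotone convergence, and $H_\tau f_n\uparrow H_\tau f$. I expect this chain-rule-along-the-curve step to be the principal technical hurdle.
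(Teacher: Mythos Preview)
The paper does not give its own proof of this lemma; it simply quotes the statement from \cite{gm14,jia14}. Your proposal reproduces exactly the standard route taken in those references: start from the sharp Li--Yau differential inequality, integrate it along a constant-speed geodesic in space--time, use Young's inequality to absorb the gradient term, and compute the resulting time integral explicitly. The algebra you sketch (the bound $\beta(\tau)\le\beta(t)$ for the spatial factor and the exact evaluation of $\int_s^t\frac{d\tau}{\beta(\tau)-1}$ for the dimensional factor) yields precisely the constants in (i) and (ii), and you have correctly isolated the one genuinely delicate point, namely justifying the chain rule $\phi'(\tau)\ge\partial_\tau\log u-|\nabla\log u|\,|\dot\gamma_\tau|$ along a single curve in a non-smooth space. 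In \cite{gm14,jia14} this is handled essentially as you describe, via the space--time Lipschitz regularity of $H_\tau f$ that the gradient estimate itself provides, together with the upper-gradient property; your truncation argument for general $f\in\bigcup_q L^q$ is also the standard one. So your plan is correct and coincides with what the cited references do.
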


Now we begin to prove Theorem \ref{lowerbound-nonnegative}.
\begin{proof}[Proof of Theorem \ref{lowerbound-nonnegative}]
(i) The inequality
$$p_t(x,y)\le \frac{C_1(\epsilon)}{\mu(B(y,\sqrt t))}\cdot\exp\Big(-\frac{d^2(x,y)}{(4+\epsilon)t}\Big)$$
follows from Sturm \cite[Corollary 2.5]{st2}, by using the doubling property in Lemma \ref{doubling} and the $L^2$ Poincar\'e
inequality in Lemma \ref{localPoicare}.

To prove the reverse inequality, we set
$N_1:=\min\{m\in\mathbb N|\ m\gs N\}$.
 Since $N\ls N_1$, the space $(X,d,\mu)$ satisfies also  $RCD^*(0,N_1)$. By applying the
 same argument in \cite[p158-p159]{st1} with Lemma \ref{lap-comp}, we conclude that  there exists a constant $C(N_1)$ such that
\begin{equation}\label{3.1}
\int_{B(y,\sqrt t)}p_t(x,z)d\mu(z)
\gs C(N_1)\cdot \exp\Big(-\frac{d^2(x,y)}{4(1-\epsilon)t}-\frac{1+\epsilon^{-1}}{2}\Big),
\end{equation}
for all $t>0$ and all $x,y\in X.$
 Indeed, the argument only used the Laplacian comparison principle for $X$ in Lemma \ref{lap-comp} and an explicit calculation for the heat kernel on Euclidean space $\mathbb R^{N_1}$ of dimension $N_1$ (see  \cite[(3.1)]{st1}).

 Fix any $\epsilon>0$. According to Lemma \ref{harnack-old} (i), we have
\begin{equation*}
\begin{split}
p_{(1+\epsilon)t}(x,y)&\ \gs \mu^{-1}\big(B(y,\sqrt t)\big)\cdot \int_{B(y,\sqrt t)}p_t(x,z)d\mu(z)\cdot \exp(-\frac{1}{4\epsilon})\cdot(1+\epsilon)^{-N_1/2}\\
&\overset{(3.1)}{\gs} C(\epsilon,N_1)\cdot  \mu^{-1}\big(B(y,\sqrt t)\big)\cdot\exp\Big(-\frac{d^2(x,y)}{4(1-\epsilon)t}\Big),
\end{split}
\end{equation*}
for all $t>0$ and all $x,y\in X.$ This implies the desired estimate.
\end{proof}

%
%
%

\begin{proof}[Proof of Corollary \ref{gradient-bound-nonnegative}]
It follows from the Li--Yau inequality
$$|\nabla \log p_t(x,\cdot)|^2(y)-\frac{\partial}{\partial t}\log p_t(x,y)\le \frac N {2t},\quad\hbox{for }\mu\hbox{-a.e. }x,y\in X,$$
in \cite{gm14,jia14} and  Theorem \ref{lowerbound-nonnegative} that, for each $\ez>0$,
\begin{eqnarray*}
|\nabla p_t(x,\cdot)|^2(y)&&\le \frac{N}{2t}p_t(x,y)^2+p_t(x,y)\lf|\Delta p_t(x,\cdot)(y)\r|\\
&&\le  \frac{C_1(\epsilon)}{t\mu(B(y,\sqrt t))^2}\cdot\exp\Big(-2\frac{d^2(x,y)}{(4+\epsilon)t}\Big),
\end{eqnarray*}
as desired.
\end{proof}

We now turn to prove Theorem \ref{lb-negative}. To this end, in particular, we derive a parabolic Harnack inequality for the heat kernel $p_t$ from Lemma \ref{harnack-old} (ii).
\begin{lem}\label{harnack}
Let $(X,d,\mu)$ be a $RCD^*(K,N)$ space with $K<0$ and $N\in [1,\infty)$.
For any $0<s<s+1\le t<\infty$ and $x,y,z\in X$, it holds
\begin{eqnarray*}
p_{t}(x,y)\ge p_{s}(x,z) \exp\lf\{-\frac{d(y,z)^2}{2e^{2K/3}}\r\}\lf(\frac{1-e^{K/3}}{1-e^{2K/3}}\r)^{N/2}\lf(\frac{1-e^{2Ks/3}}{1-e^{2K(t-1/2)/3}}\r)^{N/2}.
\end{eqnarray*}
\end{lem}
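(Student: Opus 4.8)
The plan is to derive Lemma~\ref{harnack} directly from Lemma~\ref{harnack-old}(ii) by choosing the times and points carefully. First I would apply the parabolic Harnack inequality (ii) to the nonnegative function $f=p_s(x,\cdot)\in \cup_{1\le q<\infty}L^q(X)$ (which lies in such an $L^q$ space by the Gaussian upper bound and stochastic completeness already available), but with a shift of time: instead of comparing $H_{s'}f$ and $H_{t'}f$ directly on the scale of the original semigroup, I want to land on $H_sf(x)=p_{2s}(x,x)$-type quantities versus $p_{t}(x,y)$. More precisely, using the semigroup/convolution identity $p_{a+b}(x,y)=\int_X p_a(x,w)p_b(w,y)\,d\mu(w)$, one has $H_r(p_s(x,\cdot))(y)=p_{s+r}(x,y)$. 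So applying Lemma~\ref{harnack-old}(ii) with the roles ``$s\rightsquigarrow \sigma$, $t\rightsquigarrow \sigma+\frac12$'' to the function $p_s(x,\cdot)$ evaluated at the pair $(z,y)$ gives a bound of the form
\begin{equation*}
p_{s+\sigma}(x,z)\le p_{s+\sigma+1/2}(x,y)\exp\lf\{\frac{d(y,z)^2}{4\cdot\frac12\cdot e^{2K(\sigma+1/2)/3}}\r\}\lf(\frac{1-e^{2K(\sigma+1/2)/3}}{1-e^{2K\sigma/3}}\r)^{N/2}.
\end{equation*}

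Next I would make the substitution that turns this into the claimed inequality. Taking $\sigma\to 0^+$ replaces $p_{s+\sigma}(x,z)$ by $p_s(x,z)$ (using continuity of $r\mapsto p_r(x,z)$, which follows from the heat-equation property and the standard parabolic regularity already cited), replaces $s+\sigma+\frac12$ by $s+\frac12$, and makes the exponential factor $\exp\{d(y,z)^2/(2 e^{2K(\sigma+1/2)/3})\}\to \exp\{d(y,z)^2/(2e^{K/3})\}$. The prefactor $\big((1-e^{2K(\sigma+1/2)/3})/(1-e^{2K\sigma/3})\big)^{N/2}$, however, blows up as $\sigma\to0$ since the denominator $\to 0$; so a naive limit is not quite right, and this discrepancy explains why the statement has the extra ratio $\big((1-e^{2Ks/3})/(1-e^{2K(t-1/2)/3})\big)^{N/2}$. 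The cleaner route is: apply (ii) to $f=p_\sigma(x,\cdot)$ with time endpoints $\sigma'=s-\sigma$ and $t'=t-\sigma$ (so that $s-\sigma>0$ and $t'-\sigma'=t-s\ge 1$), at the spatial pair $(z,y)$, yielding
\begin{equation*}
p_{s}(x,z)=H_{s-\sigma}(p_\sigma(x,\cdot))(z)\le H_{t-\sigma}(p_\sigma(x,\cdot))(y)\exp\lf\{\frac{d(y,z)^2}{4(t-s)e^{2K(t-\sigma)/3}}\r\}\lf(\frac{1-e^{2K(t-\sigma)/3}}{1-e^{2K(s-\sigma)/3}}\r)^{N/2},
\end{equation*}
and $H_{t-\sigma}(p_\sigma(x,\cdot))(y)=p_t(x,y)$; then let $\sigma\to 0$, which now causes no blow-up.

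From there it is just a matter of bookkeeping on the constants to match the stated form: since $t-s\ge 1$ we have $\frac{1}{4(t-s)}\le \frac14$, and since $e^{2K(t-\sigma)/3}$ is increasing to $e^{2Kt/3}$... one has to be careful about the sign of $K$, so I would track inequalities explicitly. With $K<0$, $e^{2Kt/3}<1$, so $\frac{1}{4(t-s)e^{2Kt/3}}$ is \emph{larger} than $\frac14$, which goes the wrong way; this is presumably why the authors split $t=t-\frac12+\frac12$ and only push the Harnack chain over time-length $\frac12$ at a time, accumulating the telescoping factor. Concretely I expect the real argument iterates (ii) over the two sub-intervals $[s, t-\tfrac12]$ and $[t-\tfrac12, t]$, using the semigroup property to glue, so that on the short interval $[t-\tfrac12,t]$ the exponential constant is controlled by $d(y,z)^2/(2e^{2K/3})$ (taking the worst value of the time-dependent factor on an interval of length $\le\tfrac12$ shifted to near $0$), while the long interval contributes only the dimensional prefactor $\big((1-e^{2Ks/3})/(1-e^{2K(t-1/2)/3})\big)^{N/2}$ with no exponential cost because one may take the two spatial points equal there. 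Assembling the two pieces gives exactly
\begin{equation*}
p_{t}(x,y)\ge p_{s}(x,z)\exp\lf\{-\frac{d(y,z)^2}{2e^{2K/3}}\r\}\lf(\frac{1-e^{K/3}}{1-e^{2K/3}}\r)^{N/2}\lf(\frac{1-e^{2Ks/3}}{1-e^{2K(t-1/2)/3}}\r)^{N/2}.
\end{equation*}
The main obstacle, then, is not any deep estimate but the combinatorics of choosing the time-endpoints and the intermediate spatial point so that the time-dependent coefficients in Lemma~\ref{harnack-old}(ii) land on precisely the stated constants; a secondary technical point is justifying the limit $\sigma\to0$ and the identification $H_r(p_s(x,\cdot))=p_{s+r}(x,\cdot)$ pointwise, which follows from the convolution identity and continuity of the heat kernel in time.
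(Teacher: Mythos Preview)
Your final two-step decomposition is exactly the paper's argument: write $p_t(x,y)=H_1(p_{t-1}(x,\cdot))(y)$ and apply Lemma~\ref{harnack-old}(ii) to $f=p_{t-1}(x,\cdot)$ at the fixed times $s'=\tfrac12$, $t'=1$ and points $z,y$ (this is the ``shift'' you allude to---the times are not near $0$ but exactly $\tfrac12,1$, which is precisely what yields the factor $e^{2K/3}$ in the exponential and the ratio $(1-e^{K/3})/(1-e^{2K/3})$), then apply it once more at the common spatial point $z$ between times $s$ and $t-\tfrac12$ to collect the second ratio with no exponential cost. Your diagnosis of why the first two attempts fail is accurate, and the assembled inequality is the one stated.
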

\begin{proof}
Write $p_t(x,y)=H_1(p_{t-1}(x,\cdot))(y)$. Then it follows, from Lemma \ref{harnack-old} (ii) that
\begin{eqnarray*}
p_{t}(x,y)&&=H_1(p_{t-1}(x,\cdot))(y)\\
&&\ge H_{1/2}(p_{t-1}(x,\cdot))(z)\exp\lf\{-\frac{d(y,z)^2}{2e^{2K/3}}\r\}\lf(\frac{1-e^{K/3}}{1-e^{2K/3}}\r)^{N/2}\\
&&=p_{t-1/2}(x,z)\exp\lf\{-\frac{d(y,z)^2}{2e^{2K/3}}\r\}\lf(\frac{1-e^{K/3}}{1-e^{2K/3}}\r)^{N/2}\\
&&\ge p_{s}(x,z) \exp\lf\{-\frac{d(y,z)^2}{2e^{2K/3}}\r\}\lf(\frac{1-e^{K/3}}{1-e^{2K/3}}\r)^{N/2}\lf(\frac{1-e^{2Ks/3}}{1-e^{2K(t-1/2)/3}}\r)^{N/2},
\end{eqnarray*}
as desired.
\end{proof}

The following lemma is a particular case of  Sturm \cite[Lemma 1.7]{st2}.
\begin{lem}\label{maximum-p-t}
Let $(X,d,\mu)$ be a $RCD^\ast(K,N)$ space with $K\in\rr$ and $N\in [1,\infty)$.
Suppose that $\psi\in W^{1,2}(X)\cap L^\infty(X)$ with $|\nabla \psi|^2\le \gamma^2$,
and $u$ is a solution to the heat equation $\frac{\partial}{\partial_t}u=\Delta u$ on $X\times [0,\infty)$.
Then for all $0\le s<t<\infty$, it holds
$$\|e^{\psi}u(\cdot,t)\|_{L^2(X)}\le e^{\gamma^2(t-s)}\|e^{\psi}u(\cdot,s)\|_{L^2(X)}.$$
\end{lem}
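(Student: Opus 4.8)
The plan is to run the classical \emph{integrated maximum principle} (Aronson--Davies) argument, now phrased in the Dirichlet-form language available on $RCD^\ast(K,N)$ spaces. Set $\phi(t):=\int_X e^{2\psi}u(\cdot,t)^2\,d\mu=\|e^{\psi}u(\cdot,t)\|_{L^2(X)}^2$, which is finite because $\psi$ is bounded. It suffices to establish the differential inequality $\phi'(t)\le 2\gamma^2\phi(t)$ for $t>0$ and then apply Gr\"onwall's lemma, extending the conclusion to $s=0$ at the end by the $L^2$-continuity of $t\mapsto u(\cdot,t)$ at $t=0$.

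First I would record the regularity of $u$. Writing $u(\cdot,t)=H_tu_0$ with $u_0:=u(\cdot,0)\in L^2(X)$, the analyticity of the heat semigroup associated with the strongly local Dirichlet form $(\mathcal E,W^{1,2}(X))$ gives that $t\mapsto u(\cdot,t)$ is smooth from $(0,\infty)$ into $L^2(X)$, with $u(\cdot,t)\in\mathcal D(\Delta)$ and $\partial_t u(\cdot,t)=\Delta u(\cdot,t)$ in $L^2(X)$. Consequently $t\mapsto\phi(t)$ is differentiable on $(0,\infty)$, and differentiating under the integral sign yields $\phi'(t)=2\int_X e^{2\psi}u\,\Delta u\,d\mu$.

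Next I would justify the integration by parts. Since $\psi\in W^{1,2}(X)\cap L^\infty(X)$ with $|\nabla\psi|^2\le\gamma^2$, the chain rule gives $e^{2\psi}\in W^{1,2}_{\mathrm{loc}}(X)\cap L^\infty(X)$ with $\nabla e^{2\psi}=2e^{2\psi}\nabla\psi$; combined with $u(\cdot,t)\in W^{1,2}(X)$ and the Leibniz rule, the product $\varphi:=e^{2\psi}u(\cdot,t)$ belongs to $W^{1,2}(X)$ with $\nabla\varphi=e^{2\psi}\nabla u+2e^{2\psi}u\,\nabla\psi$. Testing $\Delta u=\partial_t u\in L^2(X)$ against $\varphi\in W^{1,2}(X)$ through the Dirichlet form gives $\int_X e^{2\psi}u\,\Delta u\,d\mu=-\mathcal E(u,\varphi)=-\int_X e^{2\psi}|\nabla u|^2\,d\mu-2\int_X e^{2\psi}u\,\nabla\psi\cdot\nabla u\,d\mu$. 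By the Cauchy--Schwarz inequality for $\langle\nabla\cdot,\nabla\cdot\rangle$ together with Young's inequality, $\big|2e^{2\psi}u\,\nabla\psi\cdot\nabla u\big|\le e^{2\psi}|\nabla u|^2+e^{2\psi}u^2|\nabla\psi|^2\le e^{2\psi}|\nabla u|^2+\gamma^2 e^{2\psi}u^2$, so that $\phi'(t)\le -2\int_X e^{2\psi}|\nabla u|^2\,d\mu+2\int_X e^{2\psi}|\nabla u|^2\,d\mu+2\gamma^2\int_X e^{2\psi}u^2\,d\mu=2\gamma^2\phi(t)$.

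Finally, Gr\"onwall's inequality gives $\phi(t)\le e^{2\gamma^2(t-s)}\phi(s)$ for all $0<s<t<\infty$; taking square roots yields the claim for $s>0$, and letting $s\downarrow 0$ while using $u(\cdot,s)\to u(\cdot,0)$ in $L^2(X)$ (hence $e^{\psi}u(\cdot,s)\to e^{\psi}u(\cdot,0)$ in $L^2(X)$) extends it to $s=0$. I expect the only genuine subtlety to be the bookkeeping around the admissibility of $\varphi=e^{2\psi}u(\cdot,t)$ as a test function and the interchange of $\frac{d}{dt}$ with the integral; after the smoothing afforded by $H_t$ for $t>0$ both reduce to the Leibniz and chain rules for minimal weak upper gradients and to the analyticity of the heat semigroup, all available in the present setting. (Alternatively, since the statement is precisely \cite[Lemma 1.7]{st2} specialized to our framework, one may simply invoke it.)
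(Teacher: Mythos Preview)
Your argument is correct and is precisely the standard Aronson--Davies integrated maximum principle underlying \cite[Lemma~1.7]{st2}; the paper itself gives no proof and simply invokes that reference, which you also acknowledge at the end. So your proposal agrees with the paper's approach (modulo supplying the details the paper omits).
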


We can prove Theorem \ref{lb-negative} now. We should mention that the idea of the establishment of the upper bound comes from \cite[Theorem 2.4]{st2} and the lower bound from \cite{st1}.
\begin{proof}[Proof of Theorem \ref{lb-negative}]
{\bf (i) The upper bounds.} Let $\ez\in (0,\frac 12)$, $t>0$ and $x, y\in X$.  By the parabolic Harnack inequality
(Lemma \ref{harnack-old} (ii)), it follows that, for each $z\in B(y,\sqrt t)$, it holds
\begin{equation}\label{harnack1}
p_t(x,y)\le p_{(1+\ez)t}(x,z)\exp\lf\{\frac{d(y,z)^2}{4\ez te^{2K(1+\ez)t/3}}\r\}\lf(\frac{1-e^{2K(1+\ez)t/3}}{1-e^{2Kt/3}}\r)^{N/2},
\end{equation}
and, if $t\geq 1/\ez$, then, by Lemma \ref{harnack},
\begin{eqnarray}\label{harnack2}
p_{t}(x,y)\le p_{(1+\ez) t}(x,z) \exp\lf\{\frac{d(y,z)^2}{2e^{2K/3}}\r\}\lf(\frac{1-e^{2K/3}}{1-e^{K/3}}\r)^{N/2}\lf(\frac{1-e^{2Kt(1+\ez)/3}}{1-e^{2Kt/3}}\r)^{N/2}.
\end{eqnarray}

Let  $\beta\in \rr$, $0\le f\in L^2(X)$ and let $\psi$ be a Lipschitz cut-off function with $|\nabla \psi|^2\le 1$ $\mu$-a.e. in $X$.
Set $u(x,t):=H_t(e^{-\beta \psi}f)$.

Suppose at first that $t< 1/\ez$. By Lemma \ref{harnack-old} (ii), we find that, for all $z\in B(y,\sqrt t)$, it holds
\begin{eqnarray*}
H_{t+t\ez}(e^{-\beta \psi}f)(y)^2\le H_{t+2\ez t}(e^{-\beta \psi}f)(z)^2\exp\lf\{\frac{d(y,z)^2}{4\ez te^{2K(1+2\ez) t/3}}\r\}\lf(\frac{1-e^{2K(1+2\ez)t/3}}{1-e^{2K(1+\ez)t/3}}\r)^{N/2}.
\end{eqnarray*}
This implies that
\begin{eqnarray}\label{2.4}
H_{t+t\ez}(e^{-\beta \psi}f)(y)^2&&\le C(K,\ez)e^{-KN\ez t}\fint_{B(y,\sqrt t)} H_{t+2\ez t}(e^{-\beta \psi}f)(z)^2\,d\mu(z)\\
&&\le C(K,\ez)e^{-KN\ez t}e^{-2\beta\psi(y)+2|\beta|\sqrt t}\fint_{B(y,\sqrt t)}e^{2\beta \psi(z)} H_{t+2\ez t}(e^{-\beta \psi}f)(z)^2\,d\mu(z)\nonumber\\
&&\le C(K,\ez)e^{-KN\ez t}e^{-2\beta\psi(y)+2|\beta|\sqrt t} e^{2\beta^2t}\frac{1}{\mu(B(y,\sqrt t))}\|f\|_{L^2(X)}^2,\nonumber
\end{eqnarray}
where in the last estimate we used Lemma \ref{maximum-p-t}.

On the other hand, since
\begin{eqnarray*}
H_{t+t\ez}(e^{-\beta \psi}f)(y)^2&&=\lf[\int_Xp_{t+t\ez}(y,z)e^{-\beta \psi(z)}f(z)\,d\mu(z)\r]^2\\
&&\ge \lf[\int_{B(x,\sqrt t)}p_{t+t\ez}(y,z)e^{-\beta \psi(z)}f(z)\,d\mu(z)\r]^2\\
&&\ge e^{-2\beta \psi(x)-2|\beta|\sqrt t}\lf[\int_{B(x,\sqrt t)}p_{t+t\ez}(y,z)f(z)\,d\mu(z)\r]^2,
\end{eqnarray*}
which, together with \eqref{2.4}, and taking supremum with respect to $\|f\|_{L^2(B(x,\sqrt t))}\le 1$, yields that
\begin{eqnarray}\label{2.5}
e^{-2\beta \psi(x)-2|\beta|\sqrt t}\|p_{t+t\ez}(y,\cdot)\|_{L^2(B(x,\sqrt t))}^2&&
\le C(K,\ez)e^{-KN\ez t}e^{-2\beta\psi(y)+2|\beta|\sqrt t} e^{2\beta^2t}\mu(B(y,\sqrt t))^{-1}.
\end{eqnarray}
A similar argument as in \eqref{2.4}, by applying the parabolic Harnack inequality \eqref{harnack1}, yields,
\begin{eqnarray}\label{2.6}
p_{t}(x,y)^2&&\le C(K,\ez)\fint_{B(x,\sqrt t)} p_{t+\ez t}(z,y)^2\,d\mu(z)e^{-KN\ez t}\\
&&\le C(K,\ez)\frac{e^{-KN\ez t}}{\mu(B(x,\sqrt t))}\|p_{t+t\ez}(y,\cdot)\|_{L^2(B(x,\sqrt t))}^2.\nonumber
\end{eqnarray}

Let $\beta=\frac{d(x,y)}{2t}$, and choose $\psi$ such that $\psi(x)-\psi(y)$ is sufficiently close to $-d(x,y)$.
Combining the estimates \eqref{2.5} and \eqref{2.6}, we find
\begin{eqnarray*}
p_{t}(x,y)^2&&\le C(K,\ez)\frac{\exp\lf\{{-2KN\ez t+4\beta \sqrt t+2\beta(\psi(x)-\psi(y))+2\beta^2 t}\r\}}{\mu(B(x,\sqrt t))\mu(B(y,\sqrt t))}\\
&&  \le C(K,\ez)\frac{\exp\lf\{-2KN\ez t+\frac{2d(x,y)}{\sqrt t}-\frac{d(x,y)^2}{2t}\r\}}{\mu(B(x,\sqrt t))\mu(B(y,\sqrt t))}.
\end{eqnarray*}
On the other hand, by the local doubling property (Lemma \ref{doubling} (ii)) and $t<1/\ez$, we have
\begin{eqnarray*}
\mu(B(y,\sqrt t))&&\le \mu(B(x,\sqrt t+d(x,y)))\le C\exp\left\{C(K,N)(d(x,y)+\sqrt t)\right\}\mu(B(x,\sqrt t))\\
&&\le C(\ez)\exp\left\{C(K,N,\ez)\frac{d(x,y)}{\sqrt t}\right\}\mu(B(x,\sqrt t)).
\end{eqnarray*}
Combining the above two estimates and the Young inequality, we conclude that
\begin{eqnarray}\label{2.7}
p_{t}(x,y)&&\le \sqrt{C(K,\ez)}\frac{\exp\lf\{-KN\ez t+\frac{d(x,y)}{\sqrt t}-\frac{d(x,y)^2}{4t}+C(K,N,\ez)\frac{d(x,y)}{\sqrt t}\r\}}{\mu(B(y,\sqrt t))}\nonumber\\
&&\le C_1(\ez)\frac{1}{\mu(B(y,\sqrt t))}\exp\lf\{{C_2(\ez)t-\frac{d(x,y)^2}{(4+\ez)t}}\r\}.
\end{eqnarray}

For the case $t\ge 1/\ez$, by using the parabolic Harnack inequality \eqref{harnack2} instead of
\eqref{harnack1} in the proof of \eqref{2.6}, we can conclude that \eqref{2.7} also holds.
This completes the proof of upper bounds.

{\bf (ii) The lower bounds}.
Set $N_1:=\min\{m\in\mathbb N|\ m\gs N\}$. Since $N\ls N_1$, the metric measure space $(X,d,\mu)$ satisfies also  $RCD^*(K,N_1)$.
By applying the Laplacian comparison theorem (Lemma \ref{lap-comp}), and the parabolic maximum principle,
we conclude that, there exists a constant $C(N_1)$ such that
\begin{equation}
\int_{B(y,\sqrt t)}p_t(x,z)d\mu(z)\ge C(K,N_1,\ez)\cdot \exp\Big(-\frac{d^2(x,y)}{4(1-\epsilon)t}-C_2(\epsilon)t\Big)
\end{equation}
for all $t>0$ and all $x,y\in X.$  Notice that, the argument only used the Laplacian
comparison theorem (Lemma \ref{lap-comp}), and an explicit calculation for heat kernel on the hyperbolic
 space $E^{k,N_1}$ of dimension $N_1$ and constant sectional curvature $k=\frac{K}{N_1-1}$;
 see \cite[Corollary 3.6]{st1}.

 For  $t< 1/\ez$, by the parabolic Harnack inequality \eqref{harnack1}, we have
\begin{eqnarray}\label{lb-1}
p_t(x,y)&&\ge C(K,\ez)\fint_{B(y, \sqrt t)}p_t(x,z)\,d\mu(z)e^{KN\ez t}\\
&&\ge \frac{1}{C_1(\epsilon)\mu(B(y,\sqrt t))}\exp\Big(-\frac{d^2(x,y)}{(4-\epsilon)t}-C_2(\epsilon)t\Big).\nonumber
\end{eqnarray}

Suppose now $t\geq\max\{1,1/\ez\}$. By the parabolic Harnack inequality \eqref{harnack2}, we conclude that
\begin{eqnarray*}
p_{(1+\ez)t}(x,y)&&\ge C(K,N)\fint_{B(y,\sqrt t)}p_{t}(x,z) \exp\lf\{-\frac{d(y,z)^2}{2e^{2K/3}}\r\}\,d\mu(z)\lf(\frac{1-e^{2Kt/3}}
{1-e^{2K((1+\ez)t)/3}}\r)^{N/2}\\
&&\ge C(K,N,\ez)\fint_{B(y,\sqrt t)}p_{t}(x,z) \,d\mu(z) \exp\lf\{-\frac{t}{2e^{2K/3}}-C(\ez)t\r\}\\
&&\ge C(K,N,\ez)\exp\Big(-\frac{d^2(x,y)}{4(1-\epsilon)t}-C_2(\ez)t\Big).
\end{eqnarray*}
The proof is completed.
\end{proof}

\begin{proof}[Proof of Corollary \ref{gradient-bound-negative}]

Notice that by \cite[Theorem 1.2]{jia14}, it holds for each $f\in L^1(X)$ and each $t>0$ that,
$$|\nabla \log H_tf|^2\le e^{-2Kt/3}\frac{\Delta H_tf}{H_tf}+\frac{NK}3\frac{e^{-4Kt/3}}{1-e^{-2Kt/3}},\quad\mu\mbox{-a.e.}.$$
This gives, when $0<t\le 1$, that
\begin{equation}\label{bg-gradient}
|\nabla p_t(x,\cdot)|^2(y)\le C(K) |\Delta p_t(x,\cdot)(y)|p_t(x,y)+C(K)t^{-1} p_t(x,y)^2.
\end{equation}
If $t>1$, by writing $p_t(x,y)=H_{1}(p_{t-1}(x,\cdot))(y)$, we can also conclude that
\begin{equation*}
|\nabla p_t(x,\cdot)|^2(y)=|\nabla H_{1}(p_{t-1}(x,\cdot))|^2(y)\le C(K) |\Delta p_t(x,\cdot)(y)|p_t(x,y)+C(K) p_t(x,y)^2,
\end{equation*}

Notice that, by using Davies \cite[Theorem 4]{da97} and Theorem \ref{lb-negative}, we see that for each $t>0,$ and almost all  $x,y\in X$,
\begin{equation}\label{time-gradient}
\lf|\frac{\partial }{\partial t}p_t(x,y)\r|\le \frac{C(\ez)}{t\mu(B(y,\sqrt t))}\exp\lf\{{C_2(\ez)t-\frac{d(x,y)^2}{(4+\ez)t}}\r\}.
\end{equation}
This, together with Theorem \ref{lb-negative} again, yields
\begin{equation*}
|\nabla p_t(x,\cdot)|(y) \le \frac{C(\ez)}{\sqrt t\mu(B(y,\sqrt t))}\exp\lf\{{C_2(\ez)t-\frac{d(x,y)^2}{(4+\ez)t}}\r\},
\end{equation*}
as desired.
\end{proof}

 Gong and Wang \cite{GongWang2001} established a characterization of compactness of Riemannian manifolds by using heat kernel bounds.
 Their arguments work also in our $RCD^\ast(K,N)$ setting. We omit the details of the proof here, but just mention that the heat flow satisfies the semi-group Poincar\'{e} inequality (see \cite[Corollary 2.3]{ags3}): for every $f\in W^{1,2}(X)$ and every $t>0$,
 $$H_t(f^2)-(H_tf)^2\leq \frac{1-e^{-2Kt}}{K}H_t(|\nabla f|^2),\quad\mu\mbox{-a.e. in }X,$$
 and the mass preserving property (see \cite[Section 4]{ags2}): for every $t>0$ and every $f\in L^1(X)\cap L^2(X)$,
 $$\int_X H_tf\,d\mu=\int_X f\,d\mu,$$
{ and for every ball $B(x,r)\subset X$, the bottom of the spectrum of the operator $-\Delta$ in $L^2(B(x,r))$ is bounded above by a positive constant $C(K,N,R)$ with $R> r$, independent of $x$, by applying Lemma \ref{doubling} (see \cite[Proposition 2.1]{st3}).}
\begin{thm}\label{compactness}
Let $(X,d,\mu)$ be a $RCD^\ast(K,N)$ space with $K\in \rr$ and $N\in [1,\infty)$.
Then the following conditions are equivalent.

{\rm (i)} $(X,d)$ is compact;

{\rm (ii)} There exist $x\in X$ and $t_0>0$ such that $\int_{X}p_{t_0}(x,y)^{-1}\,d\mu(y)<\infty;$

{\rm (iii)} There exists $t_0>0$ such that
$\int_{X}p_{t_0}(x,x)\,d\mu(x)<\infty.$
\end{thm}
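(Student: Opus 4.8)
The plan is to establish the cycle of implications $(i)\Rightarrow(iii)\Rightarrow(ii)\Rightarrow(i)$, following the scheme of Gong and Wang \cite{GongWang2001} but replacing smooth manifold tools with the $RCD^\ast(K,N)$ analogues collected in Section 2. The implication $(i)\Rightarrow(iii)$ is the easy one: if $(X,d)$ is compact, then by the local doubling property (Lemma \ref{doubling}) $\mu(X)<\infty$, and from the standard on-diagonal upper bound (a consequence of the doubling property and the $L^2$ Poincar\'e inequality, as already used in the proof of Theorem \ref{lowerbound-nonnegative}) one has $p_{t_0}(x,x)\le C_1/\mu(B(x,\sqrt{t_0}))$; since $X$ is compact and $\mu$ is Borel regular with full support, $\inf_x \mu(B(x,\sqrt{t_0}))>0$, so $x\mapsto p_{t_0}(x,x)$ is bounded and hence integrable over the finite-measure space $X$. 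For $(iii)\Rightarrow(ii)$, I would use the semigroup identity $p_{2t_0}(x,x)=\int_X p_{t_0}(x,y)^2\,d\mu(y)$ together with the reverse H\"older/Cauchy--Schwarz maneuver: by stochastic completeness $\int_X p_{t_0}(x,y)\,d\mu(y)=1$, so Jensen's inequality applied to the probability measure $p_{t_0}(x,\cdot)\,d\mu$ gives $\int_X p_{t_0}(x,y)^{-1}\,d\mu(y)\ge 1$ always, and to get finiteness one argues that $(iii)$ forces $\mu(X)<\infty$ (since $p_{t_0}(x,x)\ge c/\mu(X)$-type lower bounds degenerate otherwise — more precisely one uses that $\int_X p_{t_0}(x,x)\,d\mu(x)=\int_X\!\int_X p_{t_0/2}(x,y)^2\,d\mu(y)\,d\mu(x)$ is the Hilbert--Schmidt norm of $H_{t_0/2}$, whose finiteness makes $H_{t}$ compact with discrete spectrum $0=\lambda_0<\lambda_1\le\cdots$, forcing $\mu(X)<\infty$ because the constant function is $L^2$), and then $p_{t_0}(x,\cdot)$ has a uniform positive lower bound via the spectral expansion $p_{t_0}(x,y)\ge \mu(X)^{-1} - \text{(tail)}$, controllable using the bottom-of-spectrum estimate quoted before the theorem.

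For the crucial implication $(ii)\Rightarrow(i)$, I would argue by contradiction: suppose $\int_X p_{t_0}(x,y)^{-1}\,d\mu(y)<\infty$ for some $x,t_0$ but $(X,d)$ is noncompact. Since $(X,d)$ is complete, proper and geodesic (as noted in Section 2), noncompactness means $\mu(X)=\infty$, and there is a sequence $y_n$ with $d(x,y_n)\to\infty$. The Gaussian upper bound from Theorem \ref{lb-negative} (or Theorem \ref{lowerbound-nonnegative} when $K=0$) gives $p_{t_0}(x,y)\le \frac{C_1(\epsilon)}{\mu(B(y,\sqrt{t_0}))}\exp\!\big(-\frac{d^2(x,y)}{(4+\epsilon)t_0}+C_2(\epsilon)t_0\big)$, hence $p_{t_0}(x,y)^{-1}\ge c\,\mu(B(y,\sqrt{t_0}))\exp\!\big(\frac{d^2(x,y)}{(4+\epsilon)t_0}-C_2(\epsilon)t_0\big)$. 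Integrating over annuli $A_k=\{y: k\le d(x,y)<k+1\}$ and using that $\mu(A_k)$ is bounded below (for instance by $\mu(B(y_k,1))$ for a point $y_k$ on a geodesic ray, which is bounded below away from zero along the ray by a local-doubling / volume-lower-bound argument, or more robustly by noting $\mu(B(x,k+1))-\mu(B(x,k))$ cannot all vanish on an unbounded connected proper geodesic space), the sum $\sum_k \int_{A_k} p_{t_0}(x,y)^{-1}\,d\mu(y)\ge \sum_k c\,\mu(A_k)e^{ck^2}$ diverges, contradicting $(ii)$.

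The main obstacle I anticipate is the volume lower bound for the annuli in the last step: unlike the Euclidean or smooth Riemannian case, an $RCD^\ast(K,N)$ space carries only an \emph{upper} (Bishop--Gromov type) volume bound, so controlling $\mu(A_k)$ from below is not automatic. The fix is to exploit the geodesic structure: pick a unit-speed geodesic ray $\gamma:[0,\infty)\to X$ emanating from $x$ (which exists since $X$ is proper, noncompact and geodesic), and observe that the balls $B(\gamma(k+\tfrac12),\tfrac14)$ are disjoint, contained in distinct annuli, and each has measure bounded below by $c(K,N)>0$ --- this uniform lower bound follows because the $RCD^\ast(K,N)$ condition gives, via the local doubling property applied in reverse on a fixed scale together with $\mathrm{supp}\,\mu=X$, that $\mu(B(z,r))\ge c(K,N,r)\,\mu(B(z,R))/\,(\text{doubling factor})$ and the ambient ball $B(z,R)$ for large $R$ has positive measure. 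This reduces everything to the exponential-growth estimate, which then clearly overwhelms any polynomial-type lower bound, completing the contradiction and hence the proof.
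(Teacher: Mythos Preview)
Your overall strategy differs from the paper's. The paper does not give a proof but defers to Gong--Wang \cite{GongWang2001}, highlighting as the key replacements in the $RCD^\ast$ setting the semi-group Poincar\'e inequality, mass conservation, and the uniform upper bound $\lambda_0(B(x,r))\le C(K,N,R)$ on the bottom of the Dirichlet spectrum of balls. The Gong--Wang route is functional-analytic: for instance, for $(iii)\Rightarrow(i)$ one observes that finiteness of $\int_X p_{t_0}(x,x)\,d\mu$ makes $H_{t_0/2}$ Hilbert--Schmidt with finite trace; if $X$ were noncompact one could pack infinitely many disjoint unit balls, and the uniform spectral upper bound on each produces (via min--max) infinitely many eigenvalues of $-\Delta$ below a fixed threshold, contradicting trace-finiteness. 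The semi-group Poincar\'e inequality is the engine on the $(ii)$ side. Your approach instead leans on the Gaussian upper bounds of Theorems~\ref{lowerbound-nonnegative} and~\ref{lb-negative} to force divergence of $\int_X p_{t_0}(x,y)^{-1}\,d\mu(y)$ along a geodesic ray. This is a legitimate and, in the context of this paper, arguably more self-contained alternative, since those bounds have just been proved.

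There is, however, a genuine gap in your $(ii)\Rightarrow(i)$ argument. You claim that the balls $B(\gamma(k+\tfrac12),\tfrac14)$ along a geodesic ray have measure bounded below by a constant $c(K,N)>0$, justifying this by ``local doubling applied in reverse together with $\mathrm{supp}\,\mu=X$''. This is false in general: Lemma~\ref{doubling} controls only the ratio $\mu(B(z,R))/\mu(B(z,r))$, and for $K<0$ there is no uniform positive lower bound on $\mu(B(z,r))$ as $z\to\infty$. The repair is to accept an exponentially decaying lower bound: since $B(\gamma(k),2k+1)\supset B(x,1)$, Lemma~\ref{doubling}(ii) gives
\[
\mu\big(B(\gamma(k),\tfrac14)\big)\ \ge\ c\,e^{-C'k}\,\mu\big(B(x,1)\big),
\]
and the same reasoning yields $\mu(B(y,\sqrt{t_0}))\ge c(t_0)\,e^{-C'k}$ for $y\in B(\gamma(k),\tfrac14)$. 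The resulting factor $e^{-2C'k}$ is still overwhelmed by the Gaussian factor $e^{c k^2}$ coming from the heat-kernel upper bound, so the series diverges and the contradiction goes through. Separately, your $(iii)\Rightarrow(ii)$ step is more delicate than you indicate: the spectral expansion $p_t(x,y)=\mu(X)^{-1}+\sum_{i\ge1}e^{-\lambda_i t}\phi_i(x)\phi_i(y)$ does not obviously give a uniform-in-$y$ positive lower bound without first controlling $\sup_x p_t(x,x)$, which is essentially compactness again. It is cleaner to prove $(iii)\Rightarrow(i)$ directly via the disjoint-balls/min--max argument above (this is exactly where the paper's cited spectral upper bound enters) and then close the cycle with the easy implication $(i)\Rightarrow(ii)$, using continuity and strict positivity of $p_{t_0}$ on the compact set $X\times X$.
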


\section{Stability of solutions to the heat equation}
\hskip\parindent In this section, we apply the heat kernel bounds (Theorem \ref{lowerbound-nonnegative}) to the study
of large time behaviors of the heat kernel and stability of solutions to the heat equation. Our arguments will be based on the method of Li \cite{li86} with  some necessary modifications,
due to lacking of the Stokes' formula (or the Gauss--Green formula).

\begin{defn}[Boundary measure]
For a fixed $x_0\in X$ and $r\in [0,\infty)$, define
$$s(x_0,r):=\limsup_{\delta\to0^+}\frac 1\delta \mu\big(B(x_0,r+\delta)\setminus B(x_0,r)\big).$$
\end{defn}

We remark here that, in the Riemannian manifold $M$ with $\mu$ being the volume measure, it is immediate to see that $s(x_0,r)$ is equal to the $(n-1)$-dimensional Hausdorff measure of $\partial B(x_0,r)$, for every $B(x_0,r)$ in $M$.

The first part in the next lemma is known (see e.g. \cite[Theorem 2.3]{stm5}), and the second part is immediate from the last definition and the local Lipschitz continuity of the function $r\mapsto\mu(B(x_0,r))$ in $(0,\infty)$, for each $x_0\in X$ (see e.g. \cite[p.148]{stm5}).
\begin{lem}\label{volume-growth}
Let $(X,d,\mu)$ be a $RCD^\ast(0,N)$ spaces with $N\in (1,\infty)$. Then for all $x_0\in X$ and $0<r<R<\infty$, it holds
$$\frac{s(x_0,R)}{s(x_0,r)}\le \lf(\frac Rr\r)^{N-1},$$
and
$$\mu(B(x_0,R))=\int_{0}^Rs(x_0,r)\,dr.$$
\end{lem}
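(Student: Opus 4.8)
The plan is to handle the two assertions by separate short arguments: the first inequality is simply quoted, while the second is a Lebesgue-differentiation statement for the monotone function $v(r):=\mu(B(x_0,r))$. For the first inequality, note that since $K=0$ the space $(X,d,\mu)$ is in particular a $CD(0,N)$ space, and the claimed bound $s(x_0,R)/s(x_0,r)\le(R/r)^{N-1}$ for $0<r<R<\infty$ is exactly the Bishop--Gromov surface-area comparison on such spaces, i.e.\ the monotonicity of $r\mapsto s(x_0,r)/r^{N-1}$ on $(0,\infty)$; I would invoke \cite[Theorem 2.3]{stm5}, and nothing further is needed.

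For the second identity, set $v(r):=\mu(B(x_0,r))$ for $r\ge 0$. Then $v$ is non-decreasing with $v(0)=0$, and by the local Lipschitz continuity of $r\mapsto\mu(B(x_0,r))$ on $(0,\infty)$ (see \cite[p.148]{stm5}) the function $v$ is locally absolutely continuous on $(0,\infty)$; hence it is differentiable with $v'\ge 0$ at a.e.\ $r\in(0,\infty)$ and, for all $0<a<b<\infty$,
\[
v(b)-v(a)=\int_a^b v'(r)\,dr .
\]
The key point is that $v'(r)=s(x_0,r)$ for a.e.\ $r$: since $B(x_0,r)\subset B(x_0,r+\delta)$ when $\delta>0$, one has $\frac1\delta\,\mu\big(B(x_0,r+\delta)\setminus B(x_0,r)\big)=\frac{v(r+\delta)-v(r)}{\delta}$, and at every point of differentiability of $v$ the $\limsup$ of this quantity as $\delta\to 0^+$ equals $v'(r)$, so $s(x_0,r)=v'(r)$ a.e. Finally I would let $a\to 0^+$: continuity of $\mu$ from above gives $v(a)\to\mu(\{x_0\})$, and $\mu(\{x_0\})=0$ because a doubling measure (Lemma \ref{doubling}) on a geodesic space with more than one point has no atoms — this follows at once from a disjoint-annuli argument, the case $X=\{x_0\}$ being trivial — while $\int_a^b v'(r)\,dr\to\int_0^b v'(r)\,dr$ by monotone convergence since $v'\ge 0$. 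Combining these with the displayed identity yields $v(b)=\int_0^b v'(r)\,dr=\int_0^b s(x_0,r)\,dr$ for every $b>0$, which is the claim.

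The only steps that require any attention are the a.e.\ identification $v'=s(x_0,\cdot)$ and the vanishing of $\mu$ at the single point $x_0$ (needed so that the integral may legitimately be started from $0$); both are routine once the local absolute continuity is in hand, so there is no genuine obstacle — the substance of the lemma is carried entirely by the two cited facts.
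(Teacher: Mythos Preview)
Your proposal is correct and follows essentially the same approach as the paper. The paper itself gives no detailed proof: it cites \cite[Theorem~2.3]{stm5} for the first inequality (exactly as you do) and declares the second identity ``immediate from the last definition and the local Lipschitz continuity of the function $r\mapsto\mu(B(x_0,r))$ in $(0,\infty)$'' (again citing \cite[p.~148]{stm5}). Your argument simply spells out what ``immediate'' means here --- the a.e.\ identification $s(x_0,r)=v'(r)$ via right Dini derivatives, and the passage $a\to 0^+$ using $\mu(\{x_0\})=0$ --- and both steps are handled correctly; the disjoint-balls argument for the absence of atoms in a geodesic doubling space is standard and goes through as you indicate.
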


\begin{lem}\label{lowerboundary}
Let $(X,d,\mu)$ be a $RCD^\ast(0,N)$ spaces with $N\in (1,\infty)$. If there exists $x_0\in X$, and  $\theta\in (0,\infty)$
such that
$$\liminf_{R\to\infty}\frac{\mu(B(x_0,R))}{R^N}=\theta.$$
Then for each $R>0$, it holds
$$s(x_0,R)\ge N\theta R^{N-1}.$$
\end{lem}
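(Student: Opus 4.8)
The plan is to argue by contradiction, exploiting the monotonicity of the ratio $s(x_0,r)/r^{N-1}$ from Lemma \ref{volume-growth} together with the integral formula $\mu(B(x_0,R))=\int_0^R s(x_0,r)\,dr$. Suppose, for contradiction, that there is some $R_0>0$ with $s(x_0,R_0)<N\theta R_0^{N-1}$; write $s(x_0,R_0)=N\theta' R_0^{N-1}$ for some $\theta'<\theta$. The idea is that this deficit at radius $R_0$ propagates to all larger radii by the comparison $s(x_0,r)/r^{N-1}\le s(x_0,R_0)/R_0^{N-1}$ valid for $r\ge R_0$, which gives $s(x_0,r)\le N\theta' r^{N-1}$ for every $r\ge R_0$.

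Next I would integrate this bound. For $R>R_0$,
\[
\mu(B(x_0,R))=\mu(B(x_0,R_0))+\int_{R_0}^R s(x_0,r)\,dr\le \mu(B(x_0,R_0))+N\theta'\int_{R_0}^R r^{N-1}\,dr=\mu(B(x_0,R_0))+\theta'\big(R^N-R_0^N\big).
\]
Dividing by $R^N$ and letting $R\to\infty$, the constant term $\mu(B(x_0,R_0))/R^N$ and the term $\theta' R_0^N/R^N$ both vanish, so
\[
\liminf_{R\to\infty}\frac{\mu(B(x_0,R))}{R^N}\le \theta'<\theta,
\]
contradicting the hypothesis $\liminf_{R\to\infty}\mu(B(x_0,R))/R^N=\theta$. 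Hence $s(x_0,R)\ge N\theta R^{N-1}$ for every $R>0$.

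The only genuinely delicate point is to be sure that Lemma \ref{volume-growth} applies at the specific radius $R_0$ and to all $r>R_0$ simultaneously—that is, that the comparison $s(x_0,R)/s(x_0,r)\le (R/r)^{N-1}$ holds for \emph{all} pairs $0<r<R<\infty$ and not merely almost everywhere; but this is exactly what is stated. A minor additional check is the (routine) measurability and local integrability of $r\mapsto s(x_0,r)$ needed to split the integral $\int_0^R=\int_0^{R_0}+\int_{R_0}^R$, which follows from the local Lipschitz continuity of $r\mapsto\mu(B(x_0,r))$ recalled before Lemma \ref{volume-growth}. I do not expect any serious obstacle here; the argument is a direct contradiction via the monotone-density comparison.
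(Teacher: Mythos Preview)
Your proposal is correct and follows essentially the same route as the paper's proof: both argue by contradiction, use the monotonicity of $r\mapsto s(x_0,r)/r^{N-1}$ from Lemma \ref{volume-growth} to propagate the deficit to all larger radii, integrate via the second part of that lemma, and obtain a contradiction with the assumed value of the $\liminf$. Your introduction of the explicit constant $\theta'<\theta$ is a minor cosmetic difference that makes the strict inequality at the end slightly more transparent, but the argument is the same.
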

\begin{proof}
From Lemma \ref{volume-growth}, it follows the function $R\mapsto\frac{s(x_0,R)}{R^{N-1}}$ is non-increasing on $(0,\infty)$.
Hence, if there exists $R_0\in (0,\infty)$ such that
$s(x_0,R_0)< N\theta R_0^{N-1},$ then for all $R>R_0$,  it holds
$s(x_0,R)< N\theta R^{N-1}.$
This together with Lemma \ref{volume-growth} implies that for each $R>R_0$,
$$\mu(B(x_0,R)\setminus B(x_0,R_0))=\int_{R_0}^R s(x_0,r)\,dr< \theta \lf(R^N-R_0^{N}\r),$$
and hence
$$\liminf_{R\to\infty}\frac{\mu(B(x_0,R))}{R^N}<\liminf_{R\to\infty}\frac{\theta \lf(R^N-R_0^{N}\r)+\mu(B(x_0,R_0))}{R^N}=\theta.$$
This contradicts with the assumption. Therefore, for each $R>0$, we see that
$s(x_0,R)\ge N\theta R^{N-1},$
as desired.
\end{proof}

\begin{defn}[Boundary Integral]
Let $x_0\in X$ and $r\in [0,\infty)$. Suppose $f\in L^\infty_\loc(X)$. Define the integral of $f$ on $\partial B(p,r)$ as
$$|f|_{\partial B(x_0,r)}:=\limsup_{\delta\to0^+}\frac 1\delta \int_{B(x_0,r+\delta)\setminus B(x_0,r)}f(x)\,d\mu(x).$$
\end{defn}

\begin{lem}\label{co-area}
Let $(X,d,\mu)$ be a $RCD^\ast(K,N)$ space with $K\in \rr$ and $N\in (1,\infty)$. Let $x_0\in X$ and $R\in (0,\infty)$.
Then for each $f\in L^\infty_\loc(X)$, it holds
$$\int_{B(x_0,R)}f(x)\,d\mu(x)=\int_0^R |f|_{\partial B(x_0,r)}\,dr.$$
\end{lem}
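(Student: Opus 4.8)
The plan is to establish the coarea-type identity in Lemma \ref{co-area} by a monotone-class / approximation argument, reducing the general case $f\in L^\infty_\loc(X)$ to the already-known volume formula in Lemma \ref{volume-growth}. First I would record the base case: when $f=\mathbf 1_{B(x_0,s)}$ for some $s\in(0,R)$, the right-hand side is
\[
\int_0^R |\mathbf 1_{B(x_0,s)}|_{\partial B(x_0,r)}\,dr=\int_0^s \limsup_{\delta\to 0^+}\frac1\delta\,\mu\big(B(x_0,r+\delta)\setminus B(x_0,r)\big)\,dr=\int_0^s s(x_0,r)\,dr,
\]
because for $r<s$ the annulus $B(x_0,r+\delta)\setminus B(x_0,r)$ eventually lies inside $B(x_0,s)$, while for $r>s$ the boundary integrand vanishes; by Lemma \ref{volume-growth} this equals $\mu(B(x_0,s))=\int_{B(x_0,R)}\mathbf 1_{B(x_0,s)}\,d\mu$. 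The key point making these $\limsup$'s behave is the local Lipschitz continuity of $r\mapsto\mu(B(x_0,r))$ noted before Lemma \ref{volume-growth}, which guarantees $r\mapsto s(x_0,r)$ is locally bounded (indeed $s(x_0,r)\le (r/\rho)^{N-1}s(x_0,\rho)$ for small $\rho$, so it is integrable near $0$) and that all the integrals in sight are finite.

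Next I would promote this from indicators of concentric balls to general bounded Borel $f$. By linearity the identity holds for finite linear combinations $\sum_i a_i\mathbf 1_{B(x_0,s_i)}$, hence for all radial step functions; a nonnegative $f$ depending only on $r=d(x_0,\cdot)$ is a monotone limit of such, and dominated convergence on both sides (the right side controlled by $\|f\|_\infty\int_0^R s(x_0,r)\,dr<\infty$) gives the identity for radial $f\in L^\infty$. For genuinely non-radial $f$ one works on a single annulus: fix $r$ and $\delta>0$ and simply integrate $f$ over $B(x_0,r+\delta)\setminus B(x_0,r)$ — the quantity $|f|_{\partial B(x_0,r)}$ is by definition the $\limsup$ of $\frac1\delta$ times this, so the claim for general $f$ follows once one shows $r\mapsto \frac1\delta\int_{B(x_0,r+\delta)\setminus B(x_0,r)}f\,d\mu$ is dominated (by $\frac{\|f\|_\infty}\delta\cdot(\text{annulus volume})$, which is integrable in $r$ uniformly as $\delta\to0$ thanks to the Lipschitz bound on $\mu(B(x_0,\cdot))$) and that Fubini applies to $\int_0^R\frac1\delta\int_{B(x_0,r+\delta)\setminus B(x_0,r)}f\,d\mu\,dr=\frac1\delta\int_{B(x_0,R+\delta)}f(x)\,\big(\min\{d(x_0,x),R+\delta\}-\max\{d(x_0,x)-\delta,\,?\}\big)$-type weighted integral, then letting $\delta\to 0$.

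More cleanly, I would phrase the last step as follows: define $F(r):=\int_{B(x_0,r)}f\,d\mu$ for $f\ge 0$ bounded; then $F$ is absolutely continuous on $[0,R]$ (its increments are bounded by $\|f\|_\infty$ times increments of $r\mapsto\mu(B(x_0,r))$, which is locally Lipschitz), so $F(R)=\int_0^R F'(r)\,dr$, and it remains only to identify $F'(r)=\lim_{\delta\to0^+}\frac1\delta(F(r+\delta)-F(r))=|f|_{\partial B(x_0,r)}$ for a.e.\ $r$ — but for monotone increments the $\limsup$ defining $|f|_{\partial B(x_0,r)}$ coincides with the a.e.-existing derivative $F'(r)$ precisely because $F$ is absolutely continuous, so $\int_0^R |f|_{\partial B(x_0,r)}\,dr=\int_0^R F'(r)\,dr=F(R)$. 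Splitting $f=f^+-f^-$ extends this to signed $f\in L^\infty_\loc$ (on the bounded set $B(x_0,R)$, $f\in L^\infty$).

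The main obstacle is the mismatch between the $\limsup$ in the definition of $|f|_{\partial B(x_0,r)}$ and the actual a.e.\ derivative $F'(r)$: a priori the boundary integral is only an upper limit, so one must invoke absolute continuity of $F$ (equivalently, local Lipschitz continuity of $r\mapsto\mu(B(x_0,r))$, which holds on $RCD^\ast(K,N)$ spaces as recorded before Lemma \ref{volume-growth}) to upgrade it to a genuine limit for a.e.\ $r$ and to justify the fundamental-theorem-of-calculus step $F(R)=\int_0^R F'(r)\,dr$. Once that is in place, the identity is essentially the one-dimensional fundamental theorem of calculus applied to the nondecreasing absolutely continuous function $F$, together with linearity to remove the sign restriction.
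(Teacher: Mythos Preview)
Your proposal is correct, and your final ``clean'' argument---defining $F(r):=\int_{B(x_0,r)}f\,d\mu$, observing that $F$ is absolutely continuous because its increments are controlled by $\|f\|_\infty$ times those of the locally Lipschitz volume function, and then applying the fundamental theorem of calculus together with the a.e.\ identification of $F'(r)$ with $|f|_{\partial B(x_0,r)}$---is exactly the paper's (very terse) proof. Your initial detour through indicators of balls and monotone-class approximation is unnecessary, as you yourself recognize; the absolute-continuity argument handles general $f\in L^\infty_{\loc}$ directly.
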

\begin{proof}
Notice that, for each fixed $x_0\in X$, the function $r\mapsto\mu(B(x_0,r))$ is locally Lipschitz continuous on $(0,\infty)$.
From this,  we conclude that, for each $f\in L^\infty_\loc(X)$,
the function
$$r\mapsto\int_{B(x_0,r)}f\,d\mu$$
is locally Lipschitz continuous on $(0,\infty)$, and hence, the required equality holds.
\end{proof}

\begin{proof}[Proof of Theorem \ref{large-time}]
It is shown in \cite[Theorem 1.4]{jia14} that
\begin{eqnarray}\label{large-lim}
\lim_{t\to\infty} t^{N/2}p_t(x,y)=C(\theta),
\end{eqnarray}
for some constant $C(\theta)\in (0,\infty)$. Let us prove that $\theta C(\theta)=\omega(N)(4\pi)^{-N/2}$.

Following \cite[Proof of Theorem 4.5]{st1}, by applying the Laplacian comparison theorem (Lemma \ref{lap-comp}),
and the parabolic maximum principle, it follows that, for all $x\in X$ and $t,r>0$,
\begin{eqnarray}
\int_{B(x,r)}p_t(x,y)\,d\mu(y)\ge \int_{B(x_r,r)}\frac{1}{(4\pi t)^{N/2}}\exp\lf\{-\frac{|z|^2}{4t}\r\}\,dz,
\end{eqnarray}
where $x_r=(r, 0,\cdots,0)\in \rr^{N}$. This, together with \eqref{large-lim}, implies that
\begin{eqnarray*}
\mu(B(x,r))C(\theta)&&=\lim_{t\to\infty}t^{N/2}\int_{B(x,r)}p_t(x,y)\,d\mu(y)\ge \frac{\omega(N)r^N}{(4\pi)^{N/2}}.
\end{eqnarray*}
Letting $r\to \infty$, we find that
\begin{eqnarray}\label{upp-large}
\theta C(\theta)\ge \frac{\omega(N)}{(4\pi)^{N/2}}.
\end{eqnarray}

Let us prove the upper bound of $\theta C(\theta)$.
For any $\delta>0$ and all $x,y\in X$ , the parabolic Harnack inequality in Lemma \ref{harnack-old} (i) yields
$$p_t(x,x)^2\le p_{(1+\delta)t}(x,y)^2\exp\lf\{\frac{d(x,y)^2}{2\delta t}\r\}\lf(1+\delta\r)^{N},$$
which implies that, for all $r,\ez>0$,
\begin{eqnarray*}
&&p_t(x,x)^2\frac{\mu(B(x,r+\ez)\setminus B(x,r))}{\ez}\\
&&\quad\le  \exp\lf\{\frac{(r+\ez)^2}{2\delta t}\r\}\lf(1+\delta\r)^{N}
\frac 1\ez\int_{\mu(B(x,r+\ez)\setminus B(x,r))}p_{(1+\delta)t}(x,y)^2\,d\mu(y).\nonumber
\end{eqnarray*}
Letting $\ez\to 0$ and applying Lemma \ref{lowerboundary}, we conclude that for a.e. $r>0$,
\begin{eqnarray}\label{thm4.1-1}
\lf(1+\delta\r)^{N}|p_{(1+\delta)t}(x,\cdot)^2|_{\partial B(x,r)}&&\ge \exp\lf\{-\frac{r^2}{2\delta t}\r\}{s(x,r)}p_t(x,x)^2\cr
&&\ge N\theta r^{N-1}\exp\lf\{-\frac{r^2}{2\delta t}\r\}p_t(x,x)^2.
\end{eqnarray}
Integrating over $(0,\infty)$, and applying Lemma \ref{volume-growth} and Lemma \ref{co-area}, we find that
\begin{eqnarray*}
\lf(1+\delta\r)^{N}t^{N/2}p_{2(1+\delta)t}(x,x)&&=\lf(1+\delta\r)^{N}t^{N/2}\int_X p_{(1+\delta)t}(x,y)^2\,d\mu(y)\\
&&=\lf(1+\delta\r)^{N}t^{N/2}\int_{(0,\infty)} |p_{(1+\delta)t}(x,\cdot)^2|_{\partial B(x,r)}\,dr\\
&&\ge t^{N/2}N\theta p_t(x,x)^2\int_0^\infty r^{N-1}\exp\left(-\frac{r^2}{2\delta t}\right)\,d r\\
&&=\frac{\theta t^{N}}{\omega(N)}(2\pi\delta)^{N/2}p_t(x,x)^2,
\end{eqnarray*}
where in the last inequality, we used \eqref{thm4.1-1}. With \eqref{large-lim}, letting $t\to \infty$, we derive that
\begin{eqnarray*}
\lf(\frac{1+\delta}{2}\r)^{N/2}C(\theta)&&=\lim_{t\to\infty}\lf(\frac{1+\delta}{2}\r)^{N/2}(2(1+\delta)t)^{N/2}p_{2(1+\delta)t}(x,x)\\
&&\ge \lim_{t\to\infty}\frac{(2\pi\delta)^{N/2}}{\omega(N)}\theta t^{N}p_t(x,x)^2=\frac{(2\pi\delta)^{N/2}}{\omega(N)}\theta C(\theta)^2,
\end{eqnarray*}
and hence,
\begin{eqnarray*}
\theta C(\theta) &&\le \frac{\omega(N)}{(2\pi\delta)^{N/2}}\lf(\frac{1+\delta}{2}\r)^{N/2}.
\end{eqnarray*}
Letting $\delta\to\infty$ and using \eqref{upp-large}, we have
\begin{eqnarray*}
\theta C(\theta) &&= \frac{\omega(N)}{(4\pi)^{N/2}},
\end{eqnarray*}
which implies that
$$\lim_{t\to\infty} \mu(B(x_0,\sqrt t))p_t(x,y)=\lim_{t\to\infty} \frac{\mu(B(x_0,\sqrt t))}{t^{N/2}} t^{N/2}p_t(x,y)=\theta C(\theta)= \frac{\omega(N)}{(4\pi)^{N/2}},$$
and hence, we complete the proof.
\end{proof}

The existence of the large time limit of heat kernels yields the following stability property of solutions
to the heat equation. The proof is a slightly modification of the proof of \cite[Theorem 3]{li86} to our non-smooth context.

\begin{thm}\label{stablity-he}
Let $(X,d,\mu)$ be a $RCD^\ast(0,N)$ space with $N\in \cn$ and $N\ge 2$. Let $x_0\in X$.
Suppose that there exists a constant $\theta\in (0,\infty)$ such that $\liminf_{R\to\infty}\frac{\mu(B(x_0,R))}{R^N}=\theta$.
Then, for each $f\in L^\infty(X)$, and for any $x\in X$, the limit $\lim_{t\to\infty}H_t(f)(x)$
exists, if and only if, the limit $$\lim_{r\to\infty}\fint_{B(x,r)}f(y)\,d\mu(y)$$ exists;
moreover, it holds
$$\lim_{t\to\infty}H_t(f)(x)=\lim_{r\to\infty}\fint_{B(x,r)}f(y)\,d\mu(y).$$
\end{thm}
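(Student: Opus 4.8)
The plan is to follow Li \cite{li86}: rewrite $H_tf(x)=\int_Xp_t(x,y)f(y)\,d\mu(y)$ as a weighted average of the ball averages $A(r):=\fint_{B(x,r)}f\,d\mu$ taken at scales $r\asymp\sqrt t$, and then send $t\to\infty$, at which point every relevant scale runs off to infinity. First I would collect the ingredients. Stochastic completeness gives $H_t1=1$, so $H_t$ fixes constants and $\fint_{B(x,r)}(f-c)=\fint_{B(x,r)}f-c$; hence for the forward direction we may subtract a constant and assume $\lim_rA(r)=0$. From Lemma \ref{lowerboundary} and Lemma \ref{volume-growth}, $\mu(B(x_0,R))\ge\theta R^N$ for all $R>0$, and since $r\mapsto\mu(B(x,r))/r^N$ is non-increasing (Lemma \ref{doubling}(i)) one has the $N$-dimensional growth $\mu(B(x,r))\asymp r^N$; in particular $\mu(B(y,\sqrt t))\asymp t^{N/2}$ uniformly for $y$ in a fixed ball and $t$ large, and with the sharp upper bound of Theorem \ref{lowerbound-nonnegative} this gives $\sup_{B(x,R)}p_t(x,\cdot)\le Ct^{-N/2}$. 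Writing $F(r):=\int_{B(x,r)}f\,d\mu=A(r)\mu(B(x,r))$, the estimates $|F'(r)|\le\|f\|_\infty\,\tfrac{d}{dr}\mu(B(x,r))$ and $\tfrac{d}{dr}\log\mu(B(x,r))\le N/r$ show that $r\mapsto A(r)$ is log-Lipschitz, $|A(r_2)-A(r_1)|\le 2N\|f\|_\infty|\log(r_2/r_1)|$; this slow-variation property will be needed for the converse. Finally, Theorem \ref{large-time} (and the rescaling argument behind \cite[Theorem 1.4]{jia14}) pins down the precise large-time profile of $p_t(x,\cdot)$, which identifies the limiting averaging measure with the one coming from the Euclidean heat kernel.

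For the implication ``$\lim_rA(r)=0\Rightarrow\lim_tH_tf(x)=0$'', fix a small $\eta\in(0,1)$ and split $X$ into the annuli $E_j:=B(x,(j+1)\eta\sqrt t)\setminus B(x,j\eta\sqrt t)$, $j\ge0$. By the parabolic Harnack inequality (Lemma \ref{harnack-old}(i)) together with the two-sided Gaussian bounds of Theorem \ref{lowerbound-nonnegative}, $p_t(x,\cdot)$ is comparable over $E_j$ and over adjacent annuli up to a factor that, for fixed $\eta$, is bounded and tends to $1$ with $\eta$ on any bounded range of $j$, while the refined large-time asymptotics make the corresponding replacement error genuinely $o(1)$ as $t\to\infty$. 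Replacing $p_t(x,\cdot)$ on $E_j$ by $p_{t,j}:=\inf_{E_j}p_t(x,\cdot)$, using $\int_{E_j}f\,d\mu=F((j+1)\eta\sqrt t)-F(j\eta\sqrt t)$ and an Abel summation converts $H_tf(x)$ into $\sum_{j\ge1}w_jA(j\eta\sqrt t)$ plus the $j=0$ term $\int_{B(x,\eta\sqrt t)}p_t(x,y)\,d\mu(y)\le C\eta^N\|f\|_\infty$ (using $\mu(B(x,\eta\sqrt t))\le C\eta^N t^{N/2}$ and $\mu(B(x,\sqrt t))\gtrsim t^{N/2}$), plus the replacement error; the weights satisfy $\sum_{j\ge1}|w_j|<\infty$ for each fixed $\eta$, again by Abel summation and $\int_Xp_t\,d\mu=1$. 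Given $\varepsilon>0$, choose $R_0$ with $|A(r)|<\varepsilon$ for $r\ge R_0$; then for $t$ with $\eta\sqrt t\ge R_0$ every term $A(j\eta\sqrt t)$ with $j\ge1$ has modulus $<\varepsilon$, so $\varepsilon\sum_j|w_j|\to0$ as $t\to\infty$ for fixed $\eta$, whence $\limsup_t|H_tf(x)|\le C\eta^N\|f\|_\infty$; letting $\eta\to0$ yields $H_tf(x)\to0$. In particular $\lim_tH_tf(x)=\lim_rA(r)$ whenever the latter exists, which also gives the claimed equality.

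For the converse ``$\lim_tH_tf(x)$ exists $\Rightarrow\lim_rA(r)$ exists'' I would argue by contraposition with the same representation. If $A$ does not converge then, by the log-Lipschitz bound, it stays within $\varepsilon$ of $\limsup_rA$ on a window $[\eta\sqrt t,\eta^{-1}\sqrt t]$ (for a suitable $\eta$) along one sequence $t_k\to\infty$, and within $\varepsilon$ of $\liminf_rA$ on such a window along another; feeding these into $H_tf(x)=\sum_jw_jA(j\eta\sqrt t)+O(\eta)$ shows the two subsequences of $H_tf(x)$ stay a definite distance apart, contradicting convergence — the bookkeeping being that of \cite[Theorem 3]{li86}. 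I expect the main difficulty to be the absence of radial symmetry of $p_t(x,\cdot)$: in the smooth setting one exploits the Gauss--Green formula on the level sets of $d_x$, which is not available here, and the sharp Gaussian bounds alone are not tight enough in the bulk region $d(x,y)\asymp\sqrt t$ (the implicit constants $C_1(\epsilon)$ do not converge as $\epsilon\to0$) to control the averaging measure. The resolution is to combine the co-area formula (Lemma \ref{co-area}) with the near-radiality of $p_t(x,\cdot)$ furnished by the gradient estimate (Corollary \ref{gradient-bound-nonnegative}) and the Harnack inequality, and crucially with the precise large-time behaviour of Theorem \ref{large-time}, which makes the replacement errors vanish in the limit and identifies the limiting averaging profile with that of the Euclidean Gaussian — thereby making both implications go through.
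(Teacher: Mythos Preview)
Your proposal correctly identifies the crux --- the lack of radial symmetry of $p_t(x,\cdot)$ --- but the resolution you sketch does not close the gap. The gradient estimate $|\nabla_y p_t(x,y)|\le C t^{-1/2}p_t(x,y)$ only bounds the variation of $\log p_t(x,\cdot)$ along paths; for two points $y_1,y_2$ on the same sphere $\partial B(x,r)$ with $r\asymp\sqrt t$ one may have $d(y_1,y_2)\asymp\sqrt t$, and integrating gives a bound on $\log\big(p_t(x,y_1)/p_t(x,y_2)\big)$ of order $1$, not $o(1)$. The Harnack inequality has the same defect (the time shift needed to absorb $d(y_1,y_2)^2\asymp t$ destroys the factor $(t/s)^{N/2}$), and Theorem~\ref{large-time} gives only a pointwise limit for fixed $y$, not uniform control over the moving sphere $\partial B(x,r)$ with $r\asymp\sqrt t$. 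Consequently your replacement $p_t(x,\cdot)\mapsto p_{t,j}:=\inf_{E_j}p_t(x,\cdot)$ does not produce an error that is $o(1)$, the weights $w_j=(p_{t,j-1}-p_{t,j})\mu(B(x,j\eta\sqrt t))$ need not have controlled sign (no monotonicity of $j\mapsto p_{t,j}$ is available), and the Abel-summation representation $H_tf(x)=\sum_j w_j A(j\eta\sqrt t)+O(\eta)$ is not justified. Both implications in your argument rest on this representation.

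The paper sidesteps radiality entirely by a one-sided trick. From Theorem~\ref{large-time} at the diagonal and Harnack one gets a genuinely \emph{radial lower bound}
\[
p_t(x,y)\ge\big(C(\theta)-\varepsilon(s)\big)\,t^{-N/2}\exp\Big(-\frac{d(x,y)^2}{4(t-s)}\Big),
\]
with $\varepsilon(s)\to 0$. Via the co-area formula this yields, for every $g\ge 0$, the comparison $H_tg(x)\ge(1-o(1))\,C(\theta)(4\pi)^{N/2}\,\mathcal H_{t-s}(\bar g_x)(0)$, where $\bar g_x$ is the radial transplant $\bar g_x(z)=\frac{|g|_{\partial B(x,|z|)}}{N\omega(N)|z|^{N-1}}\chi_{\{|z|\ge 1\}}$ on $\rr^N$ and $\mathcal H$ is the Euclidean heat semigroup. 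Applying this to $g=1+f$ and $g=1-f$ (assume $|f|\le 1$) and using Lemma~\ref{lowerboundary} to get $\bar 1_x\ge \theta/\omega(N)$, the two lower bounds squeeze:
\[
\lim_{t\to\infty}\Big[H_tf(x)-C(\theta)(4\pi)^{N/2}\mathcal H_{t-\sqrt t}(\bar f_x)(0)\Big]=0.
\]
Thus the problem is transferred to the Euclidean heat flow acting on the bounded radial function $\bar f_x$, where Li's original argument \cite[Theorem~3]{li86} applies directly; the final identification of $\fint_{B(0,R)}\bar f_x$ with $\frac{\mu(B(x,R))}{\omega(N)R^N}\fint_{B(x,R)}f$ (up to a vanishing correction from $B(x,1)$) closes the loop. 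The point is that only a \emph{lower} bound on $p_t$ is needed, because stochastic completeness ($H_t1=1$) supplies the matching upper control once one looks at $1\pm f$.
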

\begin{proof}
It follows from Theorem \ref{large-time} that for each $x\in X$,
$$\lim_{t\to\infty}t^{N/2}p_t(x,x)=\frac{\omega(N)}{\theta (4\pi)^{N/2}}=:C(\theta).$$
This, together with the parabolic Harnack inequality
$$p_s(x,x)\le p_t(x,y)\exp\lf\{\frac{d(x,y)^2}{4(t-s)}\r\}\lf(\frac t s\r)^{N/2},$$
for all $x,y\in X$ and $0<s<t$, yields that there exists $\ez(s)>0$ satisfying $\ez(s)\to 0$ as $s\to \infty$
such that
$$p_t(x,y)\ge (C(\theta)-\ez(s))t^{-N/2}\exp\lf\{-\frac{d(x,y)^2}{4(t-s)}\r\}.$$

For any $0\le g\in L^\infty(X)$ and each $x\in X$, it holds
\begin{eqnarray*}
H_t(g)(x)&&\ge (C(\theta)-\ez(s))t^{-N/2}\int_X \exp\lf\{-\frac{d(x,y)^2}{4(t-s)}\r\}g(y)\,d\mu(y)\\
&&\ge (C(\theta)-\ez(s))t^{-N/2}\int_1^\infty \exp\lf\{-\frac{r^2}{4(t-s)}\r\}|g|_{\partial B(x,r)}\,dr.
\end{eqnarray*}
Let $0\le g\in L^\infty(X)$ and let $z\in \mathbb{R}^N$. {We set for each $x\in X$,
$$\bar{g}_{x}(z):=\frac{|g|_{\partial B(x,|z|)}}{N\omega(N)|z|^{N-1}}\chi_{\{|z|\ge 1\}}(z),\quad z\in \rr^N.$$
By Lemma \ref{volume-growth},  we have
$$0\le \bar{g}_{x}(z)\le \|g\|_{L^\infty(X)}\frac{s(x,|z|)}{N\omega(N)|z|^{N-1}}\chi_{|z|\ge 1}(z)
\le \|g\|_{L^\infty(X)}\frac{s(x,1)}{N\omega(N)}<\infty. $$
Hence, $0\le \bar{g}_x\in L^\infty(\rr^N)$. }

Let $\mathcal{H}_t$ be the heat semigroup on $\mathbb{R}^N$. Then for each $x\in X$, it follows that
\begin{eqnarray}\label{mms-rn}
H_t(g)(x)&&\ge (C(\theta)-\ez(s))\lf(\frac{4\pi(t-s)}{t}\r)^{N/2}\mathcal{H}_{t-s}(\bar{g}_x)(0).
\end{eqnarray}

For a function $f\in L^\infty(X)$, we may assume $|f|\le 1$. By setting $g_1:=1+f$ and $g_2:=1-f$,
we then conclude from \eqref{mms-rn} that
\begin{eqnarray}\label{mms-rn1}
1+H_t(f)(x)&&\ge (C(\theta)-\ez(s))\lf(\frac{4\pi(t-s)}{t}\r)^{N/2}\lf[\mathcal{H}_{t-s}(\bar{1}_x)(0)+\mathcal{H}_{t-s}(\bar{f}_x)(0)\r],
\end{eqnarray}
and
\begin{eqnarray}\label{mms-rn2}
1-H_t(f)(x)&&\ge (C(\theta)-\ez(s))\lf(\frac{4\pi(t-s)}{t}\r)^{N/2}\lf[\mathcal{H}_{t-s}(\bar{1}_x)(0)-\mathcal{H}_{t-s}(\bar{f}_x)(0)\r].
\end{eqnarray}

Using Lemma \ref{lowerboundary} and the definition of $\bar {1}$, we find that
$$\bar{1}_x(z)=\frac{s(x,|z|)}{N\omega(N)|z|^{N-1}}\chi_{\{|z|\ge 1\}}(z)\ge
\frac{\theta}{\omega(N)}\chi_{\{|z|\ge 1\}}(z), \quad \forall \, z\in \rr^N$$
which, together with \eqref{mms-rn1} and \eqref{mms-rn2}, implies that
\begin{eqnarray*}\label{mms-rn3}
&&1-\frac{(C(\theta)-\ez(s))}{C(\theta)}\lf(\frac{t-s}{t}\r)^{N/2} \mathcal{H}_{t-s}(\chi_{\{|z|\ge 1\}})(0) \\
&&\quad\ge (C(\theta)-\ez(s))\lf(\frac{4\pi(t-s)}{t}\r)^{N/2}\mathcal{H}_{t-s}(\bar{f}_x)(0)-H_t(f)(x)\nonumber\\
&&\quad \ge \frac{(C(\theta)-\ez(s))}{C(\theta)}\lf(\frac{t-s}{t}\r)^{N/2} \mathcal{H}_{t-s}(\chi_{\{|z|\ge 1\}})(0)-1.\nonumber
\end{eqnarray*}

Letting $s=\sqrt t$ and $t\to\infty$, we find that
$$\lim_{t\to\infty}\lf[H_t(f)(x)-C(\theta)(4\pi)^{N/2}\mathcal{H}_{t-\sqrt t}(\bar{f}_x)(0)\r]=0,$$
since $\ez(\sqrt t)\to 0$ and $\mathcal{H}_{t-\sqrt t}(\chi_{\{|z|\ge 1\}})(0)\to 1$, as $t\to\infty$.
Hence, $\lim_{t\to\infty}H_t(f)(x)$ exists, if and only if, $\lim_{t\to\infty}\mathcal{H}_{t-\sqrt t}(\bar{f}_x)(0)$
exists. From the proof of \cite[Theorem 3]{li86}, it follows that the limit $\lim_{t\to\infty}\mathcal{H}_{t-\sqrt t}(\bar{f}_x)(0)$
exists, if and only if,
$$\lim_{R\to\infty}\fint_{B(0,R)\subset\rr^N}\bar f_x(z)\,dz$$
exists. Notice that, for $R>1$,
\begin{eqnarray*}
\fint_{B(0,R)\subset\rr^N}\bar f_x(z)\,dz &&=\frac{1}{\omega(N)R^N}\int_1^R |f|_{\partial B(x,r)}\,dr\\
&&=\frac{\mu(B(x,R))}{\omega(N)R^N}\lf[\fint_{B(x,R)}f(y)\,d\mu(y)-\frac{1}{\mu(B(x,R))}\fint_{B(x,1)}f(y)\,d\mu(y)\r].
\end{eqnarray*}
Since $\lim_{R\to \infty}\frac{\mu(B(x,R))}{R^N}=\theta$, we find that,
the limit $\lim_{R\to\infty}\fint_{B(0,R)\subset\rr^N}\bar f_x(z)\,dz$ exists, if and only if,
$\lim_{R\to\infty}\fint_{B(x,R)}f(y)\,d\mu(y)$ exists.

Hence, we see that the limit $\lim_{t\to\infty}H_t(f)(x)$
exists, if and only if, $\lim_{r\to\infty}\fint_{B(x,r)}f(y)\,d\mu(y)$ exists.
Similar arguments as in the proof of \cite[Theorem 3]{li86} imply that the two limits must equal, if they exist.
The proof is therefore completed.
\end{proof}

\section{Riesz transforms}
\hskip\parindent
In this section, we consider the boundedness of the Riesz transform $|\nabla (-\Delta)^{1/2}|$  and the local
version  $|\nabla (-\Delta+a)^{1/2}|$  for some $a>0$. For simplicity, we only consider the case $\mu(X)=\infty$,
the case $\mu(X)<\infty$ follows from minor modifications.

In what follows, we shall let $a_K=0$ if $K=0$, and let $a_K>0$ large enough for $K<0$. Since
$-\Delta+a_K$ is a non-negative self-adjoint operator, we can define fractional powers of
$-\Delta+a_K$ as
$$(-\Delta+a_K)^{-1/2}=\frac{\sqrt{\pi}}{2}\int_0^\infty e^{-a_Ks}e^{s\Delta}\frac{\,ds}{\sqrt s},$$
and
$$(-\Delta+a_K)^{1/2}=\frac{\sqrt{\pi}}{2}\int_0^\infty e^{-a_Ks}(\Delta+a_K) e^{s\Delta}\frac{\,ds}{\sqrt s}.$$
We refer the readers to Yosida \cite[Chapter 9.11]{yo} for more details.

In smooth settings, it is easy to see that $(-\Delta+a_K)^{-1/2}f$ and $(-\Delta+a_K)^{-1/2}f$
 are well defined if $f$ is a smooth function with compact support; see, e.g., \cite{acdh04,cd99}. In the
 non-smooth cases, we need a bit work to show that these two operators are well-defined on some dense spaces.
\begin{defn}[Acting Class]
The acting class $\mathbb{V}(X)$ of the Riesz transform $|\nabla(-\Delta+a_K)^{-1/2}|$ is defined as
$$\mathbb{V}(X):=\{f\in L^1(X)\cap L^\infty(X):\, (-\Delta+a_K)^{-1/2}f\in \mathcal{D}(\Delta)\}.$$
\end{defn}

\begin{rem}
If  $\mu(X)<\infty$, we need to consider the subclass $L^p_0(X)$ of $L^p(X)$,
where the elements $f$ are required to satisfy $f\in L^p(X)$ and $\int_Xf\,d\mu=0$. In this case, we
need to redefine  the acting class $\mathbb{V}(X)$ as
$$\mathbb{V}(X):=\lf\{f\in L^\infty(X):\, \int_Xf\,d\mu=0,\,  (-\Delta+a_K)^{-1/2}f\in \mathcal{D}(\Delta)\r\}.$$
\end{rem}

\begin{lem}\label{dense-class}
Let $(X,d,\mu)$ be a $RCD^\ast(K,N)$ space with $K\in \rr$ and $N\in [1,\infty)$.
For each $p\in [1,\infty)$, $\mathbb{V}(X)$ is dense in $L^p(X)$.
\end{lem}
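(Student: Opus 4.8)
The plan is to show that $\mathbb{V}(X)$ contains a dense subclass of $L^p(X)$ for each $p\in[1,\infty)$; the natural candidate is (a suitable subset of) the image of the heat semigroup acting on nice functions. First I would recall that $L^1(X)\cap L^\infty(X)$ is dense in $L^p(X)$ for every $p\in[1,\infty)$, so it suffices to approximate an arbitrary $f\in L^1(X)\cap L^\infty(X)$ (with, in the finite-measure case, $\int_X f\,d\mu=0$) in $L^p$-norm by elements of $\mathbb{V}(X)$. The idea is to mollify $f$ by the heat flow: set $f_t:=H_t f$ for small $t>0$, and check two things: (a) $f_t\to f$ in $L^p(X)$ as $t\to 0$, and (b) $f_t\in\mathbb{V}(X)$, i.e. $f_t\in L^1\cap L^\infty$ and $(-\Delta+a_K)^{-1/2}f_t\in\mathcal{D}(\Delta)$.

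For (a): the $L^p$-contractivity of $H_t$ on $L^1\cap L^\infty$ (hence on all $L^p$ by interpolation) together with strong continuity of the semigroup at $t=0$ on $L^p$ for $p<\infty$ gives $H_tf\to f$ in $L^p$; mass preservation ($\int_X H_tf\,d\mu=\int_X f\,d\mu$, quoted above) keeps us inside $L^p_0$ in the finite-measure case. For the $L^\infty$ bound, $\|H_tf\|_{L^\infty}\le\|f\|_{L^\infty}$ by the Markov property; for the $L^1$ bound, $\|H_tf\|_{L^1}\le\|f\|_{L^1}$. For (b), the key computation is the spectral identity
\begin{equation*}
(-\Delta+a_K)^{-1/2}H_tf=\frac{\sqrt\pi}{2}\int_0^\infty e^{-a_Ks}H_{s+t}f\,\frac{ds}{\sqrt s},
\end{equation*}
and I would apply $-\Delta+a_K$ under the integral: formally $(-\Delta+a_K)(-\Delta+a_K)^{-1/2}H_tf=(-\Delta+a_K)^{1/2}H_tf=\frac{\sqrt\pi}{2}\int_0^\infty e^{-a_Ks}(a_K-\Delta)H_{s+t}f\,\frac{ds}{\sqrt s}$, and since $\|(-\Delta)H_rf\|_{L^2}\le C r^{-1}\|f\|_{L^2}$ by analyticity of the heat semigroup on $L^2$, the $s$-integral converges absolutely near $s=0$ (because $r=s+t\ge t>0$ keeps the kernel bounded there) and near $s=\infty$ thanks to the factor $e^{-a_Ks}$ when $a_K>0$; when $K=0$ and $a_K=0$ one uses instead that in the $\mu(X)=\infty$ case the semigroup decays, or restricts to $f\in L^2$ to control the tail via $\|(-\Delta)H_rf\|_{L^2}\le Cr^{-1}\|f\|_{L^2}$ and integrability of $r^{-1}r^{-1/2}$ at infinity. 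This shows $(-\Delta+a_K)^{-1/2}H_tf\in W^{1,2}(X)$ with $\Delta$ of it in $L^2$, hence it lies in $\mathcal D(\Delta)$, so $H_tf\in\mathbb{V}(X)$.

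The main obstacle I expect is making the manipulations under the integral sign rigorous at the two ends $s\to 0^+$ and $s\to\infty$ simultaneously, and in particular handling the borderline case $K=0$, $a_K=0$, where there is no exponential damping: there one genuinely needs the $\mu(X)=\infty$ hypothesis (or a mean-zero reduction) plus a quantitative $L^2\to L^2$ decay estimate for $\|\Delta H_r f\|_{L^2}$ as $r\to\infty$, which follows from the spectral theorem since the heat semigroup has no $L^2$ kernel at the bottom of the spectrum on an infinite-measure space. A secondary technical point is to confirm that $(-\Delta+a_K)^{-1/2}$ is well-defined on $L^2\cap L^1\cap L^\infty$ in the first place — for $a_K>0$ this is immediate from the subordination formula and semigroup contractivity, while for $a_K=0$ one works on $L^2$ and invokes the infinite-measure/mean-zero condition — but once the semigroup-smoothed functions are in hand, density follows formally from (a). I would therefore organize the proof as: Step 1, reduce to approximating $f\in L^1\cap L^\infty$ (mean-zero if $\mu(X)<\infty$); Step 2, prove $H_tf\in L^1\cap L^\infty$ and $H_tf\to f$ in $L^p$; Step 3, prove $(-\Delta+a_K)^{-1/2}H_tf\in\mathcal D(\Delta)$ via the subordination integral and analyticity estimates; conclude.
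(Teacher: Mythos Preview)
Your overall strategy---approximate $f\in L^1\cap L^\infty$ by heat-regularized functions and verify membership in $\mathbb{V}(X)$---matches the paper's, but there is a genuine gap in Step~3 in the case $K=0$, $a_K=0$. You focus on showing that $\Delta\bigl((-\Delta)^{-1/2}H_tf\bigr)=(-\Delta)^{1/2}H_tf$ lies in $L^2$, and indeed the spectral multiplier $\lambda^{1/2}e^{-t\lambda}$ is bounded, so that part is fine. But membership in $\mathcal D(\Delta)$ requires first that $(-\Delta)^{-1/2}H_tf\in W^{1,2}(X)\subset L^2(X)$, and this can fail: the multiplier $\lambda^{-1/2}e^{-t\lambda}$ is \emph{unbounded} near $\lambda=0$. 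Concretely, on $X=\rr^N$ with $N\in\{1,2\}$ and $f$ a Gaussian (so $\hat f(0)\neq 0$), one has $(-\Delta)^{-1/2}H_tf\notin L^2(\rr^N)$, hence $H_tf\notin\mathbb{V}(X)$. Your proposed fix, ``invoke the infinite-measure/mean-zero condition,'' does not by itself control the bottom of the spectrum; $\mu(X)=\infty$ does not prevent $0$ from lying in the spectrum of $-\Delta$.

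The paper repairs exactly this point by approximating not with $H_tf$ but with the \emph{difference}
\[
f_\ez:=e^{-a_K\ez}H_\ez f-e^{-a_K/\ez}H_{1/\ez}f.
\]
Writing $f_\ez=\int_\ez^{1/\ez}(-\Delta+a_K)e^{t(\Delta-a_K)}f\,dt$ one gets
\[
(-\Delta+a_K)^{-1/2}f_\ez=\frac{\sqrt\pi}{2}\int_0^\infty\!\int_\ez^{1/\ez}(-\Delta+a_K)\,e^{(s+t)(\Delta-a_K)}f\,dt\,\frac{ds}{\sqrt s},
\]
so every integrand involves $e^{(s+t)\Delta}$ with $s+t\ge\ez>0$ \emph{and} the inner integral has compact range; spectrally the multiplier is $\lambda^{-1/2}(e^{-\ez\lambda}-e^{-\lambda/\ez})$, which is bounded near both $\lambda=0$ and $\lambda=\infty$. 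This simultaneously handles the small-$s$ and large-$s$ issues without any appeal to spectral-gap or decay properties of the semigroup. The subtraction of the long-time piece is precisely the missing idea in your proposal; once you insert it, your Steps~1--3 go through essentially as written (and the paper also checks that $e^{-a_K/\ez}H_{1/\ez}f\to 0$ in $L^p$ using the heat-kernel upper bounds, so $f_\ez\to f$).
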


To prove this lemma, let us recall the following mapping property established in \cite{jia14}.
\begin{prop}\label{map-hk}
Let $(X,d,\mu)$ be  a $RCD^\ast(K,N)$ space, where $K\in \rr$ and $N\in [1,\infty)$.

{\rm (i)} For each $t>0$ and $p\in [1,\infty]$, the operator $H_t$ is bounded on $L^p(X)$ with $\|H_t\|_{p,p}\le 1$.

{\rm (ii)} If $K\ge 0$, then, for each $t>0$,
the operators $\sqrt t|\nabla H_t|$ and $t\Delta H_t$ are bounded on $L^p(X)$ for all $p\in [1,\infty]$.
Moreover, there exists $C>0$, such that, for all $t>0$ and  all $p\in [1,\infty]$,
$$\max\lf\{ \|\sqrt t|\nabla H_t|\|_{p,p}, \|t\Delta H_t\|_{p,p}\r\}\le C.$$

{\rm (iii)} If $K< 0$, then, for each $t>0$,
the operators $\sqrt t|\nabla H_t|$ and $t\Delta H_t$ are bounded on $L^p(X)$ for all $p\in [1,\infty]$.
Moreover, there exists $C>0$, such that, for all $t>0$ and all $p\in [1,\infty]$,
$$\max\lf\{ \|\sqrt {(t\wedge 1)}|\nabla H_t|\|_{p,p}, \|(t\wedge 1)\Delta H_t\|_{p,p}\r\}\le C.$$
\end{prop}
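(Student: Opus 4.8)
The plan is to obtain part (i) from the Markovian and conservative nature of the heat kernel, and parts (ii)--(iii) from the pointwise gradient and time-derivative heat kernel bounds already recorded, reducing each to a Schur test against a Gaussian-type kernel.

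For (i), recall that $p_t\ge 0$, that $p_t(x,y)=p_t(y,x)$, and that $\int_X p_t(x,y)\,d\mu(y)=1$ for all $x$ (stochastic completeness). Thus $p_t(x,\cdot)\,d\mu$ is a probability measure, and for $p\in[1,\infty)$ Jensen's inequality gives $|H_tf(x)|^p\le \int_X p_t(x,y)|f(y)|^p\,d\mu(y)$. Integrating in $x$, using Fubini together with $\int_X p_t(x,y)\,d\mu(x)=1$ (symmetry plus conservativeness), yields $\|H_tf\|_p\le\|f\|_p$; the case $p=\infty$ is immediate from $\int_X p_t(x,y)\,d\mu(y)=1$. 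No interpolation is needed.

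For the gradient bound in (ii), I would write $H_tf(x)=\int_X p_t(x,y)f(y)\,d\mu(y)$ and dominate the minimal weak upper gradient pointwise by $|\nabla H_tf|(x)\le \int_X |\nabla_x p_t(x,y)|\,|f(y)|\,d\mu(y)$. By symmetry of the kernel, $|\nabla_x p_t(x,y)|=|\nabla p_t(y,\cdot)|(x)$, which Corollary \ref{gradient-bound-nonnegative} bounds by $\frac{C}{\sqrt t\,\mu(B(x,\sqrt t))}\exp(-\frac{d^2(x,y)}{(4+\epsilon)t})$. Hence $\sqrt t\,|\nabla H_tf|$ is controlled by the operator with kernel $K_t(x,y):=\mu(B(x,\sqrt t))^{-1}\exp(-\frac{d^2(x,y)}{(4+\epsilon)t})$. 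Lemma \ref{doubling} makes both $\sup_x\int_X K_t(x,y)\,d\mu(y)$ and $\sup_y\int_X K_t(x,y)\,d\mu(x)$ finite and uniform in $t$ (for $K\ge0$; for $K<0$ only on $t\le 1$, since the doubling constant at scale $\sqrt t$ then grows like $e^{C\sqrt t}$), via the standard annular decomposition $\int_X\exp(-d^2(x,\cdot)/(ct))\,d\mu\le C\mu(B(x,\sqrt t))$. Schur's test then gives $\|\sqrt t\,|\nabla H_t|\|_{p,p}\le C$ for every $p\in[1,\infty]$. For $t\Delta H_t$ I would use $\Delta H_tf=\partial_t H_tf$, so that $t\Delta H_tf(x)=\int_X t\,\partial_t p_t(x,y)f(y)\,d\mu(y)$; Davies \cite{da97} combined with the Gaussian upper bound (Theorem \ref{lowerbound-nonnegative}, resp. Theorem \ref{lb-negative}) yields $t|\partial_t p_t(x,y)|\le C\,\mu(B(y,\sqrt t))^{-1}\exp(C_2t-\frac{d^2(x,y)}{(4+\epsilon)t})$, with $C_2=0$ when $K\ge 0$, and the same Schur test applies.

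The passage to (iii) and the large-time regime is handled by the semigroup law. For $K<0$ the factor $e^{C_2(\epsilon)t}$ in \eqref{time-gradient} and the growth of the doubling constant confine the uniform estimate to $t\le 1$, which is exactly the role of $t\wedge 1$. For $t>1$ I would factor $H_t=H_1(H_{t-1})$ and use $|\nabla H_tf|=|\nabla H_1(H_{t-1}f)|$ together with the $t=1$ case and the contractivity (i): $\||\nabla H_t|\|_{p,p}\le\||\nabla H_1|\|_{p,p}\|H_{t-1}\|_{p,p}\le C$, and likewise $\Delta H_t=(\Delta H_1)H_{t-1}$. The \emph{main obstacle} is the first inequality of the preceding paragraph, namely justifying rigorously that the minimal weak upper gradient of $H_tf$ is dominated $\mu$-a.e. by $\int_X|\nabla_x p_t(x,y)|\,|f(y)|\,d\mu(y)$. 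Since $|\nabla\cdot|$ is defined only in the weak $\mu$-a.e. sense and is not a genuine pointwise derivative, this demands an approximation argument (discretising the integral and invoking lower semicontinuity of the Cheeger energy, or differentiating under the integral sign against test plans), and is the step I expect to require the most care.
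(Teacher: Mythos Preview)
The paper does not supply a proof of this proposition: it is introduced with the words ``the following mapping property established in \cite{jia14}'' and simply quoted. So there is no in-paper proof to compare against.

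Your argument is nonetheless coherent and, importantly, non-circular within this paper's logic: Theorem~\ref{lowerbound-nonnegative}, Corollary~\ref{gradient-bound-nonnegative}, Theorem~\ref{lb-negative} and the time-derivative bound~\eqref{time-gradient} are all established in Section~3 without appealing to Proposition~\ref{map-hk}. Part~(i) via Jensen and stochastic completeness is standard and correct; the Schur-test treatment of $t\Delta H_t$ using~\eqref{time-gradient} is clean; and the factorisation $H_t=H_1H_{t-1}$ for $t>1$ when $K<0$ is exactly the right way to handle the large-time regime.

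Two remarks. First, for the gradient bound there is a route that avoids the obstacle you isolate: the defining $RCD^\ast$ inequality~\eqref{defRCD} (more precisely its consequence~\eqref{reversedPoincare-1}) gives the \emph{pointwise} estimate $|\nabla H_tf|^2\le \frac{K}{e^{2Kt}-1}\,H_t(f^2)$ $\mu$-a.e., from which $\|\sqrt t\,|\nabla H_t|\|_{p,p}\le C$ for every $p\in[2,\infty]$ follows by taking $L^{p/2}$ norms and using~(i); this is recorded in the paper as Proposition~\ref{caccioppoli}. Your kernel approach is still needed (or something equivalent) for $p\in[1,2)$. Second, the obstacle you flag --- passing $|\nabla\cdot|$ under the integral sign --- is genuine in the non-smooth setting, but it can be resolved by approximating $f$ by simple functions $\sum_j c_j\chi_{E_j}$, using that in an infinitesimally Hilbertian space $|\nabla(\sum_j g_j)|\le\sum_j|\nabla g_j|$ $\mu$-a.e., and then invoking lower semicontinuity of the Cheeger energy under $L^2$-convergence to pass to the limit.
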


We can now prove Lemma \ref{dense-class}.
\begin{proof}[Proof of Lemma \ref{dense-class}] We only need to show that, for each $f\in L^1(X)\cap L^\infty(X)$, there exists
$\{f_k\}_{k\in \cn}\subset \mathbb{V}(X)$ such that $f_k\to f$ in $L^p(X)$ as $k\rightarrow\infty$.

For each $\ez>0$, define
$$f_\ez:=e^{-a_K\ez}H_{\ez}(f)-e^{-a_K/\ez}H_{1/\ez}(f).$$
From the property of the heat semi-group, it follows that $f_\ez\in L^1(X)\cap L^\infty(X)$, and
$e^{-a_K\ez}H_{\ez}(f)\to f$ in $L^p(X)$ as $\ez\to 0$. From the heat kernel bounds in Theorems \ref{lowerbound-nonnegative}
and \ref{lb-negative}, it follows that for each $x\in X$,
\begin{eqnarray*}
|e^{-a_K/\ez}H_{1/\ez}(f)|(x)&&\le \frac{C_1e^{-a_K/\ez}}{\mu(B(x,\sqrt {1/\ez}))} \int_X  \exp\Big(-\frac{d^2(x,y)}{5/\ez}+C_2/ \ez\Big)|f(y)|\,d\mu(y)\\
&&\le  \frac{C_1e^{-a_K/\ez+C_2/\ez}}{\mu(B(x,\sqrt {1/\ez}))} \|f\|_{L^1(X)},
\end{eqnarray*}
which tends to 0 as $\ez\to 0$, where we choose  $a_K>C_2>0$ if $K<0$. This implies that
$e^{-a_K/\ez}H_{1/\ez}(f)\to 0$ in $L^p(X)$, and hence, $f_\ez\to f$ in $L^p(X)$, as $\ez\to 0$.

Let us show that $f_\ez\in \mathbb{V}(X)$. By the analytic property of the heat semi-group, and the fact
\begin{eqnarray*}
(-\Delta+a_K)^{-1/2}f_\ez= \int_\ez^{1/\ez}\sqrt{s}(-\Delta+a_K)^{1/2} e^{s(\Delta-a_K)}(f)\frac{\,ds}{\sqrt s},
\end{eqnarray*}
we conclude that $$\| (-\Delta+a_K)^{-1/2}f_\ez\|_{L^2(X)}\le C(\ez)\|f\|_{L^2(X)},$$
for some constant $C(\epsilon)>0$.

To show that $(-\Delta+a_K)^{-1/2}f_\ez\in \mathcal {D}(\Delta)$, we write
\begin{eqnarray*}
(-\Delta+a_K)^{-1/2}f_\ez=\frac{\sqrt{\pi}}{2}\int_0^\infty \int_\ez^{1/\ez}(-\Delta+a_K) e^{(s+t)(\Delta-a_K)}(f)\,dt\frac{\,ds}{\sqrt s}.
\end{eqnarray*}

If $K=0$, then by using Theorem \ref{lowerbound-nonnegative}, the Minkowski inequality and Proposition
\ref{map-hk}, we find
\begin{eqnarray*}
\|\nabla (-\Delta)^{-1/2}f_\ez\|_{L^2(X)}&&\le \frac{\sqrt{\pi}}{2}\int_0^\infty \int_\ez^{1/\ez}\|\nabla (-\Delta)e^{(s+t)\Delta}(f)\|_{L^2(X)}\,dt\frac{\,ds}{\sqrt s}\\
&&\le C \int_0^\infty \int_\ez^{1/\ez}\|\nabla e^{\frac{(s+t)}2\Delta} (-\Delta)e^{\frac{(s+t)}{2}\Delta}(f)\|_{L^2(X)}\,dt\frac{\,ds}{\sqrt s} \\
&&\le  C\int_0^\infty \int_\ez^{1/\ez}\frac{1}{(s+t)^{3/2}}\|f\|_{L^2(X)}\,dt\frac{\,ds}{\sqrt s}\\
&&\le C(\ez)\|f\|_{L^2(X)},
\end{eqnarray*}
and
\begin{eqnarray*}
\|\Delta (-\Delta)^{-1/2}f_\ez\|_{L^2(X)}&&\le \frac{\sqrt{\pi}}{2}\int_0^\infty \int_\ez^{1/\ez}\|\Delta (-\Delta)e^{(s+t)\Delta}(f)\|_{L^2(X)}\,dt\frac{\,ds}{\sqrt s}\\
&&\le  \frac{\sqrt{\pi}}{2}\int_0^\infty \int_\ez^{1/\ez}\frac{1}{(s+t)^{2}}\|f\|_{L^2(X)}\,dt\frac{\,ds}{\sqrt s}\\
&&\le C(\ez)\|f\|_{L^2(X)},
\end{eqnarray*}
for some constant $C(\epsilon)>0$. These show that $f_\ez\in \mathbb{V}(X)$ when $K=0$.

Suppose now $K<0$. Using Theorem \ref{lb-negative}, the Minkowski inequality and Proposition
\ref{map-hk}, we find
\begin{eqnarray*}
\|\nabla (-\Delta+a_K)^{-1/2}f_\ez\|_{L^2(X)}&&\le \frac{\sqrt{\pi}}{2}\int_0^\infty \int_\ez^{1/\ez}\|\nabla (-\Delta+a_K)e^{(s+t)(\Delta-a_K)}(f)\|_{L^2(X)}\,dt\frac{\,ds}{\sqrt s}\\
&&\le  \frac{\sqrt{\pi}}{2}\int_0^\infty \int_\ez^{1/\ez}\frac{e^{-a_K(t+s)}}{[(s+t)\wedge 1]^{3/2}}\|f\|_{L^2(X)}\,dt\frac{\,ds}{\sqrt s}\\
&&\le C(\ez,a_K)\|f\|_{L^2(X)},
\end{eqnarray*}
and
\begin{eqnarray*}
\|\Delta (-\Delta+a_K)^{-1/2}f_\ez\|_{L^2(X)}&&\le \frac{\sqrt{\pi}}{2}\int_0^\infty \int_\ez^{1/\ez}\|\Delta (-\Delta+a_K)e^{(s+t)(\Delta-a_K)}(f)\|_{L^2(X)}\,dt\frac{\,ds}{\sqrt s}\\
&&\le  \frac{\sqrt{\pi}}{2}\int_0^\infty \int_\ez^{1/\ez}\frac{e^{-a_K(t+s)}}{[(s+t)\wedge 1]^{2}}\|f\|_{L^2(X)}\,dt\frac{\,ds}{\sqrt s}\\
&&\le C(\ez,a_K)\|f\|_{L^2(X)},
\end{eqnarray*}
for some constant $C(\ez,a_K)>0$. Hence, we can conclude now that $ f_\ez\in \mathbb{V}(X)$ for each $\ez>0$.

Therefore, $\mathbb{V}(X)$ is dense in $L^p(X)$ for each $p\in [1,\infty)$. The proof is completed.
\end{proof}

Now we present the main results of this section. The first theorem is on the $L^p$ boundedness of the Riesz transform $|\nabla (-\Delta)^{-1/2}|$. We recall that a sub-linear operator $T$ is of weak type $(1,1)$ if, for any $f\in L^1(X)$, there exists a constant $C>0$ such that $$\sup_{\lambda>0}\lambda\mu(\{x\in X:\, |Tf(x)|>\lambda\})\leq C\|f\|_{L^1(X)}.$$
\begin{thm}\label{Riesz}
Let $(X,d,\mu)$ be a $RCD^\ast(0,N)$ space with $N\in [1,\infty)$.
Then, the Riesz transform $|\nabla (-\Delta)^{-1/2}|$, initially defined on $\mathbb{V}(X)$, extends to a
sub-linear operator of weak type $(1,1)$ and it is also bounded on $L^p(X)$, for each $p\in (1,\infty)$.
\end{thm}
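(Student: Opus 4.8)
The plan is to follow the by-now classical strategy of Coulhon--Duong \cite{cd99} (see also \cite{acdh04}) for proving boundedness of the Riesz transform under Gaussian heat kernel bounds, adapted to the $RCD^\ast(0,N)$ setting using the tools already assembled in the excerpt. The key structural ingredients available to us are: (a) the two-sided Gaussian bounds for $p_t$ (Theorem \ref{lowerbound-nonnegative}); (b) the global doubling property of $\mu$ (Lemma \ref{doubling} (i)); (c) the gradient bound $|\nabla H_tf|\le C t^{-1/2}\|f\|_{L^p}$, equivalently the on-diagonal estimate $\|\sqrt t|\nabla H_t|\|_{p,p}\le C$ from Proposition \ref{map-hk} (ii); and (d) the fact that $\mathbb{V}(X)$ is a dense subspace of each $L^p(X)$ on which the Riesz transform is genuinely defined (Lemma \ref{dense-class}). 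The whole argument reduces to: $L^2$-boundedness (which is essentially a spectral calculus identity together with the infinitesimal Hilbertian structure), followed by a weak $(1,1)$ bound, followed by Marcinkiewicz interpolation to get $(1,2)$, and duality plus the same Calder\'on--Zygmund machinery for the range $(2,\infty)$ — actually for the upper range one typically uses the reverse inequality or again the Coulhon--Duong duality trick, but since $K=0$ here the argument is symmetric enough.

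Concretely, I would proceed as follows. \emph{Step 1 ($L^2$ boundedness).} For $f\in\mathbb{V}(X)$, write $\nabla(-\Delta)^{-1/2}f$ using the subordination formula $(-\Delta)^{-1/2}=\frac{1}{\sqrt\pi}\int_0^\infty e^{s\Delta}\frac{ds}{\sqrt s}$ and the spectral theorem: one checks $\||\nabla(-\Delta)^{-1/2}f|\|_{L^2}^2=\langle -\Delta(-\Delta)^{-1/2}f,(-\Delta)^{-1/2}f\rangle=\langle f,f\rangle=\|f\|_{L^2}^2$, using that $\mathcal E(g,g)=\||\nabla g|\|_{L^2}^2$ for $g\in W^{1,2}(X)$ and that $(-\Delta)^{-1/2}f\in\mathcal D(\Delta)$ for $f\in\mathbb V(X)$. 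So the Riesz transform is in fact an $L^2$ isometry on $\mathbb V(X)$ and extends to a bounded (indeed isometric) operator on $L^2(X)$. \emph{Step 2 (weak $(1,1)$).} Apply the Calder\'on--Zygmund decomposition adapted to the doubling space $(X,d,\mu)$ at height $\lambda$ to $f\in L^1(X)\cap L^2(X)$, splitting $f=g+\sum_i b_i$ with the $b_i$ supported on balls $B_i=B(x_i,r_i)$, mean zero, $\|b_i\|_{L^1}\lesssim\lambda\mu(B_i)$, and $\sum_i\mu(B_i)\lesssim\lambda^{-1}\|f\|_{L^1}$. The good part is handled by the $L^2$ bound from Step 1. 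For the bad part, split each $b_i$ via the semigroup at scale $r_i^2$: write $\nabla(-\Delta)^{-1/2}b_i=\nabla(-\Delta)^{-1/2}(I-e^{r_i^2\Delta})b_i+\nabla(-\Delta)^{-1/2}e^{r_i^2\Delta}b_i$. The first term is controlled using $\nabla(-\Delta)^{-1/2}(I-e^{r^2\Delta})=c\int_0^{r^2}\nabla e^{s\Delta}\,(-\Delta)^{1/2}\cdots$, more precisely one uses $(-\Delta)^{-1/2}(I-e^{r^2\Delta})=c\int_0^{r^2}e^{s\Delta}\frac{ds}{\sqrt s}$ composed with $\nabla$ and then exploits the mean-zero property of $b_i$ together with the pointwise gradient kernel bounds (Corollary \ref{gradient-bound-nonnegative}) and Gaussian decay to sum over $i$ and integrate over $X\setminus 4B_i$. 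The second term uses $\nabla(-\Delta)^{-1/2}e^{r^2\Delta}=c\int_{r^2}^\infty\nabla e^{s\Delta}\frac{ds}{\sqrt s}$, and $\|\nabla e^{s\Delta}b_i\|_{L^1}\lesssim s^{-1/2}\|b_i\|_{L^1}$ from Proposition \ref{map-hk}(ii) (the $L^1$ gradient bound following from the $L^\infty$ one for the kernel by duality), so that $\int_{r_i^2}^\infty s^{-1/2}\cdot s^{-1/2}\,ds\cdot\|b_i\|_{L^1}$... — here one actually needs the off-diagonal $L^1\to L^1$ decay of $\nabla e^{s\Delta}$ restricted to the complement of $4B_i$, which follows from integrating the Gaussian gradient kernel bound. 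Summing, both pieces are $\lesssim\lambda^{-1}\|f\|_{L^1}$. \emph{Step 3 (interpolation and duality).} Marcinkiewicz interpolation between the weak $(1,1)$ bound and the $L^2$ bound gives $L^p$ boundedness for $p\in(1,2]$. For $p\in(2,\infty)$: since $(X,d,\mu)$ is infinitesimally Hilbertian and $K=0$, the reverse Riesz inequality / duality argument of \cite{cd99,acdh04} applies — the adjoint of $\nabla(-\Delta)^{-1/2}$ (acting on vector fields, i.e.\ on the $L^2$ module of derivations) is $(-\Delta)^{-1/2}\mathrm{div}$, and boundedness on $L^{p'}$, $p'\in(1,2)$, of a suitable companion operator dualizes; alternatively, one invokes that the Gaussian upper bound plus the $L^2$ Poincar\'e inequality already give the full range $(1,\infty)$ by the results quoted in the text. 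I would cite \cite{acdh04} for this last passage rather than reprove it.

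The main obstacle I expect is the \emph{off-diagonal kernel estimates for $\nabla e^{s\Delta}$} needed in the bad-part estimate of Step 2: Corollary \ref{gradient-bound-nonnegative} only gives the gradient bound for $\mu$-a.e.\ base point with a fixed second variable, so one must be careful that the gradient heat kernel estimate is used in the correct variable and that the resulting "integral of the gradient kernel over the far region" genuinely decays like a negative power of $d(x_i,\cdot)^2/s$. Since $|\nabla_y p_s(x,y)|\lesssim s^{-1/2}\mu(B(y,\sqrt s))^{-1}\exp(-d(x,y)^2/(4+\epsilon)s)$, the quantity $\int_{X\setminus 4B_i}|\nabla_x p_s(x,z)|\,d\mu(x)$ is estimated by decomposing $X\setminus 4B_i$ into annuli and using doubling; this is routine but must be done carefully to get a bound summable against $\int_{r_i^2}^\infty s^{-1/2}\frac{ds}{\sqrt s}$ which is only logarithmically divergent at $\infty$ unless the exponential factor saves us — it does, because for $z$ in the $k$-th annulus $d(x_i,z)\sim 2^k r_i$ forces $s\gtrsim 4^k r_i^2$ for the exponential not to be tiny, restoring convergence. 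The careful bookkeeping of these annular sums, together with verifying that the a.e.-defined gradient bounds can be integrated (using the continuity of $x\mapsto p_t(x,y)$ noted in Section 2 and Fatou/monotone convergence), is where the real work lies; everything else is a transcription of \cite{cd99} into the metric-measure language already set up in Sections 2--3.
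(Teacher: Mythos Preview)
Your proposal is correct and follows essentially the same route as the paper: the $L^2$ isometry via the Dirichlet-form identity, the Coulhon--Duong weak-$(1,1)$ argument resting on the annular off-diagonal estimate $\int_{d(x,y)\ge \sqrt t}|\nabla_y p_s(x,y)|\,d\mu(y)\lesssim s^{-1/2}e^{-ct/s}$ (this is exactly the single estimate the paper isolates and verifies from Corollary~\ref{gradient-bound-nonnegative} plus doubling), Marcinkiewicz for $1<p<2$, and an appeal to \cite{acdh04} for $p>2$. One caution: your duality remark for $p\in(2,\infty)$ is not a valid shortcut---$L^p$ boundedness of the Riesz transform for $p<2$ does \emph{not} dualize to $p>2$ in general, since the adjoint $(-\Delta)^{-1/2}\mathrm{div}$ is a different operator---so your fallback of citing \cite[Theorem~2.1]{acdh04}, whose real input is the uniform bound $\|\sqrt t|\nabla H_t|\|_{p,p}\le C$ from Proposition~\ref{map-hk}(ii), is the correct (and the paper's) move.
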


The next one is the local version of Theorem \ref{Riesz}.
\begin{thm}\label{localRiesz}
Let $(X,d,\mu)$ be a $RCD^\ast(K,N)$ space with $K<0$ and $N\in [1,\infty)$.
Then, for large enough $a_K>0$, the local Riesz transform $|\nabla (-\Delta+a_K)^{-1/2}|$,
initially defined on $\mathbb{V}(X)$,  extends to a  bounded sub-linear operator on $L^p(X)$, for each $p\in (1,\infty)$.
\end{thm}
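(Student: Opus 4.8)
The plan is to run the standard Calder\'on--Zygmund strategy for Riesz transforms: Coulhon--Duong \cite{cd99} for the range $1<p\le 2$ and Auscher--Coulhon--Duong--Hofmann \cite{acdh04} for $2<p<\infty$, the whole point being that \emph{all} of their structural hypotheses are already available on a $RCD^\ast(K,N)$ space once we cash in the exponential factor $e^{-a_Ks}$ coming from the subordination formula. By Lemma \ref{dense-class} the acting class $\mathbb{V}(X)$ is dense in every $L^p(X)$, so it suffices to establish the a priori inequality $\|\,|\nabla(-\Delta+a_K)^{-1/2}f|\,\|_{L^p(X)}\le C_p\|f\|_{L^p(X)}$ for $f\in\mathbb{V}(X)$ and then extend by density. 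The $L^2$ case is immediate: for $f\in\mathbb{V}(X)$ the function $g=(-\Delta+a_K)^{-1/2}f$ lies in $\mathcal D(\Delta)$, so $\|\,|\nabla g|\,\|_{L^2(X)}^2=\mathcal E(g,g)=\langle(-\Delta)(-\Delta+a_K)^{-1}f,f\rangle\le\|f\|_{L^2(X)}^2$ by the spectral theorem, since $0\le(-\Delta)(-\Delta+a_K)^{-1}\le I$.

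Before treating $p\ne 2$ I would fix $a_K$ large enough. From the subordination formula, $|\nabla(-\Delta+a_K)^{-1/2}|$ has integral kernel $k(x,y)=\tfrac{\sqrt\pi}{2}\int_0^\infty e^{-a_Ks}\,\nabla_x p_s(x,y)\,\tfrac{ds}{\sqrt s}$, and Corollary \ref{gradient-bound-negative} together with the doubling property of Lemma \ref{doubling}(ii) bounds this by $C\int_0^\infty \tfrac{e^{-(a_K-C_2(\epsilon))s}}{s\,\mu(B(y,\sqrt s))}\exp\!\bigl(-\tfrac{d(x,y)^2}{(4+\epsilon)s}\bigr)\,ds$; choosing $a_K>C_2(\epsilon)$ for one fixed small $\epsilon$ makes this a convergent integral with super-exponential off-diagonal decay, $|k(x,y)|\le C\exp(-c\,d(x,y)^2)$ whenever $d(x,y)\ge 1$. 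Morally the operator lives on scales $d(x,y)\lesssim 1$, up to an exponentially small error, and on such scales the \emph{local} doubling of Lemma \ref{doubling}(ii) and the \emph{local} $L^2$ Poincar\'e inequality of Lemma \ref{localPoicare} play the role of genuine global hypotheses.

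For $1<p\le 2$ I would then follow \cite[\S\S2--3]{cd99}: split $\nabla(-\Delta+a_K)^{-1/2}=\nabla(-\Delta+a_K)^{-1/2}(I-e^{r\Delta})+\nabla(-\Delta+a_K)^{-1/2}e^{r\Delta}$ with $r$ adapted to the Calder\'on--Zygmund cube, control the ``local'' piece by the heat-kernel and gradient bounds of Theorem \ref{lb-negative} and Corollary \ref{gradient-bound-negative} together with the mapping properties of Proposition \ref{map-hk}, and control the $e^{r\Delta}$ piece by the $L^2$ bound already proved; this yields weak type $(1,1)$, and Marcinkiewicz interpolation with the $L^2$ bound gives $L^p(X)$ for all $p\in(1,2]$. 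For $2<p<\infty$ I would invoke \cite{acdh04}: on a doubling space with Gaussian heat-kernel upper bounds (Theorem \ref{lb-negative}), the local $L^2$ Poincar\'e inequality (Lemma \ref{localPoicare}) and the pointwise Gaussian gradient estimate (Corollary \ref{gradient-bound-negative}), the Riesz transform is bounded on $L^p$ for every $p\in(2,\infty)$; again $a_K$ large confines the relevant maximal-function and good-$\lambda$ estimates to bounded scales, so the local forms of the hypotheses suffice. Combining the two ranges and extending from $\mathbb{V}(X)$ by density proves the theorem.

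The main obstacle is exactly this localization step: making rigorous that $e^{-a_Ks}$ with $a_K$ large genuinely truncates the problem to bounded scales, so that Lemmas \ref{doubling}(ii) and \ref{localPoicare} --- which are merely local because $K<0$ --- may legitimately replace the global doubling and Poincar\'e inequalities demanded in \cite{cd99,acdh04}. A secondary technical point, already present in \cite{cd99}, is that the H\"ormander-type regularity of $k(x,y)$ in the second variable is not visible from the pointwise bounds and has to be bypassed through the $(I-e^{r\Delta})$ splitting, using the analyticity of the heat semigroup and Proposition \ref{map-hk}.
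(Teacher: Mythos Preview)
Your proposal is correct and follows essentially the same route as the paper: $L^2$ boundedness from the quadratic form, then \cite{cd99} for $1<p\le 2$ and \cite{acdh04} for $2<p<\infty$, fed by the heat-kernel bounds of Theorem \ref{lb-negative} and the gradient estimate of Corollary \ref{gradient-bound-negative}. The one point where the paper is more efficient is precisely the localization issue you flag as the main obstacle: rather than arguing by hand that the factor $e^{-a_Ks}$ truncates to bounded scales so that the merely local doubling and Poincar\'e inequalities suffice, the paper simply invokes \cite[Theorem 1.7]{acdh04}, which is already the \emph{local} Riesz transform theorem (with the shift $a_K$ built in), and in its alternative proof refers to the localization technique of \cite[Section 5]{DuongRobinson1996}. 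So your ``main obstacle'' is real but already packaged in the cited results and need not be redone.
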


Notice that, as pointed out by \cite[p.6]{BCF14}, although the results from \cite{acdh04} were stated in manifolds, their method of proofs indeeds work on metric measure spaces. Therefore, Theorem \ref{Riesz} and Theorem \ref{localRiesz} follow directly from \cite[Theorem 1.4]{acdh04} and \cite[Theorem 1.7]{acdh04}
by using Corollary \ref{gradient-bound-nonnegative} and Corollary \ref{gradient-bound-negative}, respectively.

In what follows, we outline two different proofs of these results, since the proofs are basically identical to \cite{cd99,acdh04}.

\subsection{Proofs by using heat kernel gradient estimates}
\hskip\parindent
Now we begin the proof of the main results by combing the heat kernel gradient estimate in Corollaries \ref{gradient-bound-nonnegative} and  \ref{gradient-bound-negative}, and applying the original method of \cite{cd99} and \cite{acdh04}.
\begin{proof}[Proof of Theorem \ref{Riesz}]
For each $f\in \mathbb{V}(X)$, it follows from the definition of $\mathbb{V}(X)$ that,
$$\|\nabla (-\Delta)^{-1/2}(f)\|_{L^2(X)}=\lf(\int_X (-\Delta)^{-1/2}(f)(x) (-\Delta)^{1/2}(f)(x)\,d\mu(x)\r)^{1/2}=\|f\|_{L^2(X)}.$$
Then by Lemma \ref{dense-class}, we conclude that $|\nabla (-\Delta)^{-1/2}|$ is bounded on $L^2(X)$. Following the method in \cite{cd99} and \cite{acdh04}, we divide the proof into two cases, namely,  $p\in (1,2)$ and $p\in (2,\infty)$.

Following the proofs in \cite[Sections 2 and 3]{cd99}, by applying the Calder\'{o}n--Zygmund decomposition
(see, e.g., \cite[pp.1154-1155]{cd99}) for functions and the Marcinckiewicz interpolation theorem, in order to show the boundedness of $|\nabla (-\Delta)^{-1/2}|$ on $L^p(X)$ for all $p\in(1,2)$, we only need to prove that $|\nabla (-\Delta)^{-1/2}|$ is of weak type $(1,1)$.  It will follow from by using the heat kernel estimate in Theorem \ref{lowerbound-nonnegative} and by showing that, there exist constants $C,c>0$ such that for all $s,t>0$ and $\mu$-a.e. $x\in X$,
$$\int_{d(x,y)\ge t^{1/2}}|\nabla p_s(x,\cdot)|(y)\,d\mu(y)\le Ce^{-c t/s}s^{-1/2}.$$
Indeed, by using the gradient estimate \eqref{kernelgradient} and the doubling property in Lemma \ref{doubling} (i), we see that
\begin{eqnarray*}
\int_{d(x,y)\ge t^{1/2}}|\nabla  p_s(x,\cdot)|(y)\,d\mu(y)&&\le \sum_{k=1}^\infty \frac{C\mu\big(B(x,2^{k}\sqrt t)\setminus B(x,2^{k-1}\sqrt t)\big)}{\sqrt s\mu(B(x,\sqrt s))}\exp\lf\{-\frac{2^{2k}t}{20s}\r\} \\
&&\le C \exp\lf\{-\frac{ t}{10s}\r\}s^{-1/2} .
\end{eqnarray*}
Hence,  $|\nabla (-\Delta)^{-1/2}|$ is  bounded on $L^p(X)$, for all $1<p<2$.

For the remaining case that $p\in (2,\infty)$, combining the estimate of heat kernel in Theorem \ref{lowerbound-nonnegative} and its gradient estimate \eqref{kernelgradient} and the real variable result on singular integrals in \cite[Theorem 2.1]{acdh04}, and following the proof in
\cite[pp.936-938]{acdh04},  we conclude that $|\nabla (-\Delta)^{-1/2}|$ is bounded on $L^p(X)$ for all $p\in (2,\infty)$, which completes
the proof.
\end{proof}

Now we begin to prove Theorem \ref{localRiesz}.
\begin{proof}[Proof of Theorem \ref{localRiesz}]
 For each $f\in \mathbb{V}(X)$, it follows from the definition of $\mathbb{V}(X)$ that,
\begin{eqnarray*}
&&\|\nabla (-\Delta+a_K)^{-1/2}(f)\|_{L^2(X)}^2+a_K\|(-\Delta+a_K)^{-1/2}(f)\|_{L^2(X)}^2\\
&&=\int_X \langle(-\Delta+a_K)(-\Delta+a_K)^{-1/2}(f),(-\Delta+a_K)^{-1/2}(f)\rangle\,d\mu\\
&&=\|f\|_{L^2(X)}^2.
\end{eqnarray*}
Hence, $|\nabla (-\Delta+a_K)^{-1/2}|$ is bounded on $L^2(X)$ with
\begin{eqnarray*}
&&\|\nabla (-\Delta+a_K)^{-1/2}(f)\|_{L^2(X)}\le \|f\|_{L^2(X)}.
\end{eqnarray*}

The rest of the proof follows from similar proofs in the cases $p\in (1,2)$ and $p\in(2,\infty)$ as in \cite{cd99,acdh04}, by  noticing that
$$|\nabla (-\Delta+a_K)^{-1/2}f|\le \frac{\sqrt \pi}{2}\int_0^\infty |\nabla e^{t(\Delta-a_K)}(f)|\frac{\,dt}{\sqrt t},$$
and combining the estimates of the heat kernel and its gradient in Theorem \ref{lb-negative} and in Corollary \ref{gradient-bound-negative}, respectively. The proof is then finished.
\end{proof}

\subsection{Proofs without explicit heat kernel gradient estimates}
\hskip\parindent
In fact, also by the method in \cite{cd99,acdh04}, the $L^p$ boundedness of the Riesz transform $|\nabla (-\Delta)^{-1/2}|$ and the local Riesz transform $|\nabla (a_K-\Delta)^{-1/2}|$ for every $p\in (1,\infty)$ can be shown without using the explicit heat kernel gradient estimates \eqref{kernelgradient} and \eqref{kernelgradientnegative} .  The approach demands the following main ingredients: (1) doubling property, (2) stochastic completeness, (3) $L^2$ version of the local weak Poincar\'{e} inequality, (4) Caccioppoli type inequalities, (5) $L^p$ boundedness of $\sqrt{t}|\nabla H_t|$, (6) Davies--Gaffney type estimates, and (7) $L^2$ boundedness of the Riesz transform.

Now we are going to establish the missing parts (4)-(6) one by one and then give another proof of Theorems \ref{Riesz} and \ref{localRiesz}.

The Caccioppoli type estimate for the semi-group $H_t$ can be deduced from the following inequality (see \cite[Corollary 2.3]{ags3}),
for every $t>0$ and $f\in L^2(X)$,
\begin{eqnarray}\label{reversedPoincare}
\frac{e^{2Kt}-1}{K}|\nabla H_tf|^2 + \frac{e^{2Kt}-2Kt-1}{NK^2}(\Delta H_tf)^2\leq H_t(f^2)-(H_t f)^2,\quad\mu\mbox{-a.e. in }X.
\end{eqnarray}
Here and in the next proposition, $N=\infty$ is allowable.
\begin{prop}\label{caccioppoli}
Let $(X,d,\mu)$ be a $RCD^\ast(K,N)$ space with $K\in\R$ and $N\in [1,\infty)$ and let $p\in [2,\infty]$. For any $t>0$, it holds
\begin{eqnarray}\label{caccioppoliinequality}
\||\nabla H_tf|\|_{L^p(X)} \leq \sqrt{\frac{K}{e^{2Kt}-1}}\|f\|_{L^p(X)},\quad\mbox{for every } f\in L^2\cap L^p(X);
\end{eqnarray}
in particular, for $K\geq 0$, it holds
\begin{eqnarray}\label{caccioppoliinequality+}
\||\nabla H_tf|\|_{L^p(X)} \leq \frac{1}{\sqrt{2t}}\|f\|_{L^p(X)},\quad\mbox{for every } f\in L^2\cap L^p(X),
\end{eqnarray}
and for $K<0$, it holds
\begin{eqnarray}\label{caccioppoliinequality-}
\||\nabla H_tf|\|_{L^p(X)} \leq \frac{1}{\sqrt{2(t\wedge 1)}}\|f\|_{L^p(X)},\quad\mbox{for every } f\in L^2\cap L^p(X).
\end{eqnarray}
\end{prop}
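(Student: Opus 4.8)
The plan is to extract a pointwise gradient bound directly from the reverse Poincar\'e inequality \eqref{reversedPoincare} and then convert it into an $L^p$ estimate by a one–line square–function argument that uses only the $L^q$–contractivity of $H_t$. First I would note that both coefficients on the left of \eqref{reversedPoincare} are nonnegative: $e^{2Kt}-2Kt-1\ge0$ for every $t>0$ and $K\in\R$ (this is $e^x\ge1+x$ with $x=2Kt$), and $NK^2>0$, so the Laplacian term may simply be discarded; likewise $\frac{e^{2Kt}-1}{K}>0$ for all $K\neq0$ (and equals $2t$ in the limit $K\to0$), so, after also dropping the nonnegative term $(H_tf)^2$, \eqref{reversedPoincare} rearranges to the $\mu$-a.e. pointwise inequality
\begin{equation*}
|\nabla H_tf|^2\le\frac{K}{e^{2Kt}-1}\,\big(H_t(f^2)-(H_tf)^2\big)\le\frac{K}{e^{2Kt}-1}\,H_t(f^2).
\end{equation*}

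For $p\in[2,\infty)$ I would then take the $L^{p/2}$ norm of both sides --- legitimate since $f\in L^2\cap L^p$ forces $f^2\in L^1\cap L^{p/2}$ and hence $H_t(f^2)\in L^{p/2}$ --- use the identity $\||\nabla H_tf|^2\|_{L^{p/2}(X)}=\||\nabla H_tf|\|_{L^p(X)}^2$, and invoke $\|H_t\|_{q,q}\le1$ with $q=p/2$ from Proposition \ref{map-hk}(i) to get $\|H_t(f^2)\|_{L^{p/2}(X)}\le\|f^2\|_{L^{p/2}(X)}=\|f\|_{L^p(X)}^2$. Taking square roots yields \eqref{caccioppoliinequality}. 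The case $p=\infty$ is the same argument with $L^\infty$ in place of $L^{p/2}$, using the pointwise bound together with $\|H_t\|_{\infty,\infty}\le1$, hence $\|H_t(f^2)\|_{L^\infty(X)}\le\|f\|_{L^\infty(X)}^2$.

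It then remains to specialize the scalar factor $\frac{K}{e^{2Kt}-1}$. For $K\ge0$ the elementary bound $e^{2Kt}-1\ge2Kt$ gives $\frac{K}{e^{2Kt}-1}\le\frac1{2t}$ (with equality when $K=0$), which is \eqref{caccioppoliinequality+}. For $K<0$ one uses that $t\mapsto\frac{K}{e^{2Kt}-1}$ is decreasing and, written as $\frac1{2t}\cdot\frac{2Kt}{e^{2Kt}-1}$ with the second factor bounded on $(0,1]$, satisfies $\frac{K}{e^{2Kt}-1}\le\frac{C(K)}{t\wedge1}$; this yields \eqref{caccioppoliinequality-}. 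Alternatively, for $t\ge1$ one may write $H_tf=H_1(H_{t-1}f)$ and combine the $t=1$ instance of \eqref{caccioppoliinequality} with $\|H_{t-1}\|_{p,p}\le1$, so that only the range $t\in(0,1]$ needs the scalar estimate.

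I do not expect a genuine obstacle here: the substantive input \eqref{reversedPoincare} is already cited, and the remaining ingredients are the contractivity of $H_t$ and two elementary scalar inequalities. The only mildly delicate point is keeping the $K<0$ constant controlled as $t\to0$, which is precisely the role of the truncation $t\wedge1$ in \eqref{caccioppoliinequality-}.
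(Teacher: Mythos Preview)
Your proposal is correct and follows essentially the same route as the paper: drop the nonnegative terms in \eqref{reversedPoincare} to obtain the pointwise bound $|\nabla H_tf|^2\le\frac{K}{e^{2Kt}-1}H_t(f^2)$, then pass to $L^{p/2}$ norms using the contractivity of $H_t$, and finally specialize the scalar factor. Your handling of the $K<0$ case (arriving at a $K$-dependent constant in \eqref{caccioppoliinequality-}) is in fact more carefully argued than the paper's one-line justification.
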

\begin{proof}
By \eqref{reversedPoincare}, it is immediate to get, for every $t>0$ and $f\in L^2(X)$,
\begin{eqnarray}\label{reversedPoincare-1}
|\nabla H_tf|^2 \leq \frac{K}{e^{2Kt}-1}H_t(f^2),\quad\mu\mbox{-a.e. in }X.
\end{eqnarray}
Note that for $t>0$, $H_t$ is a contraction in all $L^p(X)$ with $p\in [1,\infty]$. For $p=\infty$, the proof of \eqref{caccioppoliinequality} is obvious by \eqref{reversedPoincare-1}. Now take $p\in [2,\infty)$ and $f\in L^2\cap L^p(X)$. Then, by \eqref{reversedPoincare-1}, we have
\begin{eqnarray*}
\||\nabla H_tf|\|_{L^p(X)} \leq \sqrt{\frac{K}{e^{2Kt}-1}}\left(\int_X\big(H_tf^2\big)^{\frac{p}{2}}\, d\mu\right)^{\frac{1}{p}}\leq \sqrt{\frac{K}{e^{2Kt}-1}}\|f\|_{L^p(X)},
\end{eqnarray*}
which is just \eqref{caccioppoliinequality}. For $K\geq 0$, \eqref{caccioppoliinequality+} is immediately implied by \eqref{caccioppoliinequality} and the elementary inequality $e^{2Kt}\geq 1+2Kt$. For $K<0$, \eqref{caccioppoliinequality-} follows from the fact that the function $t\mapsto \frac{e^{2Kt}-1}{K}$ is decreasing in $[0,\infty)$ with maximum value 2 at $t=0$.
\end{proof}

The first Davies--Gaffney type estimate can be established by the same proof of \cite[Lemma 3.6]{BaudoinGarofalo2012} with minor modifications. In fact, if we let
$$\Gamma(f,g)=\la \nabla f, \nabla g\ra,\quad\mbox{for every }f,g\in W^{1,2}(X),$$
then $\Gamma: W^{1,2}(X)\times W^{1,2}(X)\rightarrow L^1(X)$ is a \emph{carr\'{e} du champ}. The second one can be established by the general method since $(H_t)_{t\geq 0}$ is an analytic semi-group in $L^p(X)$ for every $p\in (1,\infty)$; see \cite[Lemma 3.7]{BaudoinGarofalo2012}. We present them in the next lemma and omit the details here.
\begin{lem}\label{gaffney-1}
Let $(X,d,\mu)$ be a $RCD^\ast(K,N)$ space with $K\in\R$ and $N\in [1,\infty)$. For every closed subsets $E,F$ of $X$, every $t>0$ and every function $f\in L^2(X)$ with support in $E$, it holds
\begin{eqnarray}\label{gaffney1}
\|H_tf\|_{L^2(F)} \leq e^{-\frac{d^2(E,F)}{4t}}\|f\|_{L^2(E)},
\end{eqnarray}
and there exists a constant $C>0$, such that
$$\|t\Delta H_tf\|_{L^2(F)}\leq Ce^{-\frac{d^2(E,F)}{6t}} \| f\|_{L^2(E)}.$$
\end{lem}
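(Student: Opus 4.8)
The plan is to prove the two bounds separately: \eqref{gaffney1} by Davies' integrated-energy (``integrated maximum principle'') method, which is the minor modification of \cite[Lemma 3.6]{BaudoinGarofalo2012} referred to above, and the bound on $t\Delta H_t$ by a Cauchy-integral argument that upgrades \eqref{gaffney1} using analyticity of the heat semigroup, following \cite[Lemma 3.7]{BaudoinGarofalo2012}.

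For \eqref{gaffney1} I would fix closed sets $E,F$ with $r:=d(E,F)>0$, a function $f\in L^2(X)$ supported in $E$, put $u_t:=H_tf$, and use the bounded Lipschitz function $\phi:=\min\{d(\cdot,E),r\}$, which satisfies $|\nabla\phi|\le 1$ $\mu$-a.e.\ and vanishes on $E$. Since $\Gamma(f,g)=\langle\nabla f,\nabla g\rangle$ is a \emph{carr\'e du champ}, the chain and Leibniz rules are available, so for $\alpha>0$ one can differentiate $I_\alpha(t):=\int_X e^{2\alpha\phi}u_t^2\,d\mu$, use $u_t\in\mathcal D(\Delta)$ and the Cauchy--Schwarz inequality to obtain $I_\alpha'(t)\le 2\alpha^2 I_\alpha(t)$, and integrate to get $I_\alpha(t)\le e^{2\alpha^2 t}I_\alpha(0)=e^{2\alpha^2 t}\|f\|_{L^2(E)}^2$. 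Bounding $I_\alpha(t)\ge e^{2\alpha r}\|H_tf\|_{L^2(F)}^2$ from below (since $\phi\equiv r$ on $F$) and optimizing with $\alpha=r/(2t)$ yields \eqref{gaffney1}. This step is precisely the integrated maximum principle of Lemma \ref{maximum-p-t} applied with $\psi=\alpha\phi$ together with an optimization in $\alpha$; the only point to check, since $\psi$ need not lie in $W^{1,2}(X)$ when $\mu(X)=\infty$, is that the energy computation still makes sense, which it does because $e^{2\alpha\phi}$ is bounded and $H_tf\in\mathcal D(\Delta)$, so $e^{2\alpha\phi}H_tf\in W^{1,2}(X)$.

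For the bound on $t\Delta H_t$ I would use that $H_z$ extends to an analytic semigroup on $L^2(X)$ and represent, for a small $\eta\in(0,1)$, the $t$-derivative by Cauchy's formula over the circle $\gamma_t:=\{z:|z-t|=\eta t\}$:
\begin{equation*}
t\Delta H_tf=t\frac{d}{dt}H_tf=\frac{t}{2\pi i}\int_{\gamma_t}\frac{H_zf}{(z-t)^2}\,dz .
\end{equation*}
The needed input is the complex-time version of \eqref{gaffney1}, namely $\|H_zf\|_{L^2(F)}\le e^{-r^2\mathrm{Re}(1/z)/4}\|f\|_{L^2(E)}$ for $\mathrm{Re}(z)>0$, obtained from \eqref{gaffney1} and the contraction $\|H_z\|_{2,2}\le 1$ (for $\mathrm{Re}(z)\ge0$) by a Phragm\'en--Lindel\"of argument. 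On $\gamma_t$ one has $\mathrm{Re}(1/z)\ge(1-\eta)\big((1+\eta)^2t\big)^{-1}$ and $|z-t|=\eta t$, so bounding the contour integral by the arclength $2\pi\eta t$ gives
\begin{equation*}
\|t\Delta H_tf\|_{L^2(F)}\le\frac1\eta\exp\Big(-\frac{(1-\eta)r^2}{4(1+\eta)^2t}\Big)\|f\|_{L^2(E)},
\end{equation*}
and choosing $\eta$ small enough that $3(1-\eta)\ge 2(1+\eta)^2$ (e.g.\ $\eta$ close to $0$) produces the asserted bound with $C=1/\eta$.

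The main obstacle is the complex-time estimate: running the Phragm\'en--Lindel\"of / three-lines argument on the sector of analyticity, handling the behaviour of $e^{r^2/(4z)}\langle H_zf,g\rangle$ as $z\to0$, and tracking constants so that the exponent in the final bound is $d^2(E,F)/(6t)$. By contrast \eqref{gaffney1} is essentially soft, being the already-established Lemma \ref{maximum-p-t} together with an optimization, the only nuisance being the integrability bookkeeping mentioned above.
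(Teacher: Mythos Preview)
Your proposal is correct and follows essentially the same route that the paper indicates: the paper does not give a detailed proof but refers to \cite[Lemma 3.6]{BaudoinGarofalo2012} for \eqref{gaffney1} (the Davies integrated-energy method you outline, with $\Gamma(f,g)=\langle\nabla f,\nabla g\rangle$ serving as the carr\'e du champ) and to \cite[Lemma 3.7]{BaudoinGarofalo2012} for the second estimate (the analytic-semigroup / Cauchy-integral argument you describe). Your identification of Lemma \ref{maximum-p-t} as the engine behind \eqref{gaffney1}, your handling of the integrability issue for $e^{2\alpha\phi}H_tf$, and your constant-tracking for the exponent $d^2(E,F)/(6t)$ via the choice of $\eta$ are all in order; the Phragm\'en--Lindel\"of step you flag as the main obstacle is indeed the standard way to pass from real-time to complex-time off-diagonal decay (and is precisely what underlies the remark following the lemma, which cites \cite{CS}).
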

\begin{rem}\rm
One may also use two side heat kernel bounds, Theorem \ref{lowerbound-nonnegative} and
 Theorem \ref{lb-negative}, and Phragm\'en-Lindel\"of theorem (cf. \cite[Proposition 2.1]{CS}) to show that
$$\|H_tf\|_{L^2(F)} \leq e^{-\frac{d^2(E,F)}{5t}}\|f\|_{L^2(E)}.$$
Notice that in the above estimate the constant in the exponential term is larger than in \eqref{gaffney1}.
\end{rem}

Applying the similar argument as the proof of \eqref{gaffney1}, we can prove the following corollary, which is in fact equivalent to \eqref{gaffney1}.
\begin{cor}\label{gaffneycorollary}
Let $(X,d,\mu)$ be a $RCD^\ast(K,N)$ space with $K\in\R$ and $N\in [1,\infty)$. For every $t>0$ and $f_i\in L^2(B_i)$ with support in two balls $B_i:=B(x_i,r_i)$ in $X$, $i=1,2$, it holds
\begin{eqnarray}\label{gaffney2}
\int_X (H_tf_1)f_2\, d\mu\leq e^{-\frac{d^2(B_1,B_2)}{4t}}\|f_1\|_{L^2(B_1)}\|f_2\|_{L^2(B_2)}.
\end{eqnarray}
\end{cor}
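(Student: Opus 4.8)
The plan is to deduce \eqref{gaffney2} directly from the Davies--Gaffney estimate \eqref{gaffney1} by a polarization-type argument, exactly as one proves that the two formulations are equivalent. First I would observe that the left-hand side of \eqref{gaffney2} involves only the values of $H_tf_1$ on the support of $f_2$, i.e. on $B_2$. Hence, by the Cauchy--Schwarz inequality in $L^2(X)$,
\begin{eqnarray*}
\int_X (H_tf_1)f_2\,d\mu=\int_{B_2}(H_tf_1)f_2\,d\mu\le \|H_tf_1\|_{L^2(B_2)}\|f_2\|_{L^2(B_2)}.
\end{eqnarray*}
Now apply \eqref{gaffney1} with $E:=\overline{B_1}$ (which contains the support of $f_1$) and $F:=\overline{B_2}$, noting that $d(\overline{B_1},\overline{B_2})=d(B_1,B_2)$ since $X$ is a length space, to obtain $\|H_tf_1\|_{L^2(B_2)}\le e^{-d^2(B_1,B_2)/(4t)}\|f_1\|_{L^2(B_1)}$. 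Combining the two displays yields \eqref{gaffney2}.

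The only point requiring a word of care is the passage from open balls to their closures in the distance term; since $(X,d)$ is a complete length (indeed geodesic) space, $d(\overline{B_1},\overline{B_2})=d(B_1,B_2)$, and one may alternatively run the argument with $E$ and $F$ taken to be any closed sets containing the supports of $f_1$ and $f_2$, so this causes no loss. If the ball closures are disjoint the estimate is meaningful; if $d(B_1,B_2)=0$ it reduces to the trivial bound coming from $\|H_t\|_{2\to2}\le 1$, which is also consistent. No genuine obstacle is expected here: the corollary is a formal consequence of \eqref{gaffney1}, and indeed equivalent to it, since testing \eqref{gaffney2} against arbitrary $f_2$ supported in balls exhausting $F$ recovers \eqref{gaffney1}. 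Alternatively, one can prove \eqref{gaffney2} from scratch by repeating the proof of \eqref{gaffney1}: set $\phi$ a bounded Lipschitz function with $|\nabla\phi|\le 1$ that separates $B_1$ from $B_2$, consider $I(t):=\int_X (H_tf_1)\,f_2\,e^{\alpha\phi}\,d\mu$ type quantities (or rather the energy $\int_X (e^{-\alpha\phi}H_tf_1)^2\,d\mu$), differentiate in $t$, use $\Gamma(f,g)=\langle\nabla f,\nabla g\rangle$ being a carré du champ together with the chain rule, and optimize over $\alpha$; but the first, softer argument is cleaner and is the one I would present.
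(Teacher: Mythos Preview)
Your proof is correct. The paper does not spell out a proof but indicates one should repeat ``the similar argument as the proof of \eqref{gaffney1}'' --- i.e., rerun the exponential-weight energy computation, which is exactly the alternative you sketch at the end. Your primary argument instead deduces \eqref{gaffney2} directly from \eqref{gaffney1} via Cauchy--Schwarz, which is shorter and makes the equivalence of the two formulations (also noted by the paper) immediate; the paper's route is self-contained but redundant once \eqref{gaffney1} is in hand. One minor remark: the identity $d(\overline{B_1},\overline{B_2})=d(B_1,B_2)$ holds in any metric space by passing to limits in the infimum defining set distance, so the length-space hypothesis is not actually needed at that step.
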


In order to establish the third Davies--Gaffney type estimates, we need to construct the Lipschitz cut-off function with quantitative estimate on its gradient in $X$, which is possible since for fixed $x\in X$, the function $y\mapsto d(x,y)$ is Lipschitz continuous in $X$ with respect to the distance $d$ itself.
\begin{lem}\label{cutoff}
Let $\epsilon>0$. For a closed subset $F$ of $X$, there exists a function $\chi\in \Lip(X)$ such that  $\chi\in [0,1]$, $\chi\equiv 1$ on $F^{(\epsilon/2)}$ and $\chi\equiv 0$ in $X \setminus F^{(\epsilon)}$, and satisfying
$$|\nabla \chi|\leq \frac{2}{\epsilon},\quad\mu\mbox{-a.e. in }X,$$
where $F^{(\epsilon)}:=\{x\in X: d(x,F)<\epsilon\}$ is the $\epsilon$-neighborhood of $F$ with respect to $d$.
\end{lem}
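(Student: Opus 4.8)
The plan is to build $\chi$ explicitly from the distance function to $F$. Set $\rho(x):=d(x,F)=\inf_{z\in F}d(x,z)$; since each $y\mapsto d(y,z)$ is $1$-Lipschitz and an infimum of $1$-Lipschitz functions is $1$-Lipschitz, $\rho$ is $1$-Lipschitz on $X$, hence $|\nabla\rho|\le 1$ $\mu$-a.e.\ (this is the standard fact that the minimal weak upper gradient of a Lipschitz function is bounded by its Lipschitz constant; alternatively, on an $RCD^\ast(K,N)$ space one has the local Poincar\'e inequality of Lemma \ref{localPoicare} and the identification of $d$ with the intrinsic distance $d_{\mathcal E}$, which already pins down $|\nabla\rho|\le 1$). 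Then define the real-valued profile $\phi:[0,\infty)\to[0,1]$ by $\phi(s):=1$ for $s\le \epsilon/2$, $\phi(s):=0$ for $s\ge \epsilon$, and $\phi(s):=\frac{2}{\epsilon}(\epsilon-s)=2-\frac{2s}{\epsilon}$ for $s\in[\epsilon/2,\epsilon]$, and put $\chi:=\phi\circ\rho$.

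The verification then proceeds in four short steps. First, $\phi$ is Lipschitz on $[0,\infty)$ with Lipschitz constant $2/\epsilon$ and takes values in $[0,1]$, so $\chi=\phi\circ\rho$ is a composition of a Lipschitz function with a Lipschitz function, hence $\chi\in\Lip(X)$ with values in $[0,1]$. Second, the boundary behaviour: if $x\in F^{(\epsilon/2)}$ then $\rho(x)<\epsilon/2$, so $\chi(x)=\phi(\rho(x))=1$; if $x\in X\setminus F^{(\epsilon)}$ then $\rho(x)\ge\epsilon$, so $\chi(x)=0$. Third, the gradient bound: by the chain rule for minimal weak upper gradients (valid on infinitesimally Hilbertian spaces, cf.\ the discussion after Definition \ref{glap} and \cite{gi12}), $|\nabla\chi|\le |\phi'|(\rho)\,|\nabla\rho|\le \frac{2}{\epsilon}\cdot 1=\frac{2}{\epsilon}$ $\mu$-a.e.\ in $X$; more carefully, $\phi$ is piecewise affine so $|\phi'|\le 2/\epsilon$ wherever it is differentiable and $|\nabla\chi|=0$ $\mu$-a.e.\ on the level sets $\{\rho<\epsilon/2\}$ and $\{\rho>\epsilon\}$ where $\chi$ is locally constant, so the bound $|\nabla\chi|\le\frac2\epsilon$ holds $\mu$-a.e.\ throughout. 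This is exactly the asserted estimate.

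I do not anticipate a serious obstacle here; the statement is essentially a soft consequence of the Lipschitz continuity of $d(\cdot,F)$ and the calculus rules for weak upper gradients. The only point that requires a word of care is the chain rule at the ``corners'' $s=\epsilon/2$ and $s=\epsilon$ of the profile $\phi$: one should either invoke the locality of the minimal weak upper gradient (so that $|\nabla\chi|$ vanishes on the open sets where $\chi$ is constant and coincides with $\frac2\epsilon|\nabla\rho|$ on the open set $\{\epsilon/2<\rho<\epsilon\}$, with the level sets $\{\rho=\epsilon/2\}$, $\{\rho=\epsilon\}$ being handled by locality again) or smooth the corners of $\phi$ slightly and pass to the limit. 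Either way the bound $|\nabla\chi|\le 2/\epsilon$ survives, and all the required properties of $\chi$ follow. If one prefers to avoid chain-rule subtleties altogether, one can simply note that $\chi$ is $\frac2\epsilon$-Lipschitz as a function on $X$ (composition estimate: $|\chi(x)-\chi(y)|\le \frac2\epsilon|\rho(x)-\rho(y)|\le\frac2\epsilon d(x,y)$), and then $|\nabla\chi|\le\operatorname{Lip}\chi\le\frac2\epsilon$ $\mu$-a.e.\ by the general bound on the minimal weak upper gradient of a Lipschitz function.
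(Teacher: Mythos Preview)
Your proof is correct and follows the same approach as the paper: both construct $\chi$ explicitly as a truncated affine function of a distance function (you use $d(\cdot,F)$, the paper uses $d(\cdot,F^{(\epsilon/2)})$), and both rely on the $1$-Lipschitz property of this distance to deduce the $2/\epsilon$ bound on $|\nabla\chi|$. Your argument is in fact more detailed than the paper's one-line proof, and the final remark---that $\chi$ is globally $\frac{2}{\epsilon}$-Lipschitz, hence $|\nabla\chi|\le\frac{2}{\epsilon}$ directly---is the cleanest route and avoids any chain-rule subtleties.
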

\begin{proof}
Choose
$$\chi(x)=\frac{\big(\frac{\epsilon}{2}-d(x,F^{(\epsilon/2)})\big)^+}{\frac{\epsilon}{2}}\wedge 1,\quad x\in X,$$
where $d(x,A):=\inf_{y\in A} d(x,y)$ for a subset $A$ in $X$. We complete the proof.
\end{proof}

{The third Davies--Gaffney type estimate is presented in the next lemma. The key ingredient in the proof is the following inequality: for every $t>0$,
\begin{eqnarray*}
t\int_X \chi^2 |\nabla H_tf|^2\, d\mu \leq \|t\Delta H_tf\|_{L^2(\bar{F}^{(\epsilon)})}\|H_tf\|_{L^2(X)}+2\Big(t\int_X |\nabla\chi|^2(H_tf)^2\,d\mu \Big)^{\frac{1}{2}}\Big(t\int_X \chi^2|\nabla H_tf|^2\, d\mu \Big)^{\frac{1}{2}},
\end{eqnarray*}
where $\bar{F}^{(\epsilon)}$ is the closure of $F^{(\epsilon)}$ and $\chi$ is the cut-off function constructed in Lemma \ref{cutoff}, and then apply the first two Davies--Gaffney type estimates in Lemma \ref{gaffney-1} (see \cite[Lemma 3.10]{BaudoinGarofalo2012} for the smooth manifold case). We also omit the details here.}
\begin{lem}\label{gaffney-3}
Let $(X,d,\mu)$ be a $RCD^\ast(K,N)$ space with $K\in\R$ and $N\in [1,\infty)$. For every closed sets $E,F \subset X$, every $t>0$ and every function $f\in L^2(X)$ with support in $E$, there exist constants $C>0$ and $\beta>0$ such that
$$\sqrt{t}\||\nabla H_tf|\|_{L^2(F)}\leq C e^{-\beta\frac{d^2(E,F)}{t}} \| f\|_{L^2(E)}.$$
\end{lem}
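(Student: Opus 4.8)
We sketch how one would prove Lemma~\ref{gaffney-3}, adapting the scheme of \cite[Lemma~3.10]{BaudoinGarofalo2012}. The plan is to bound $\||\nabla H_tf|\|_{L^2(F)}$ by a localized integration by parts against a Lipschitz cut-off and then feed in the first two Davies--Gaffney estimates of Lemma~\ref{gaffney-1}. First we would dispose of the degenerate case $d(E,F)=0$: there the asserted bound is simply the Caccioppoli estimate $\sqrt t\,\||\nabla H_tf|\|_{L^2(X)}\le C\|f\|_{L^2(X)}$, which is \eqref{caccioppoliinequality+} of Proposition~\ref{caccioppoli} when $K\ge0$ and, when $K<0$, its local counterpart \eqref{caccioppoliinequality-} combined with Proposition~\ref{map-hk}(iii). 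So one may assume $d(E,F)>0$ and apply Lemma~\ref{cutoff} to the closed set $F$ with parameter $\epsilon:=d(E,F)/2$, obtaining $\chi\in\Lip(X)$ with $0\le\chi\le1$, $\chi\equiv1$ on $F$ (since $F\subset F^{(\epsilon/2)}$), $\supp\chi\subset\bar F^{(\epsilon)}$, and $|\nabla\chi|\le2/\epsilon=4/d(E,F)$ $\mu$-a.e. Because $d(x,E)\ge d(E,F)-d(x,F)\ge d(E,F)-\epsilon=d(E,F)/2$ for every $x\in\bar F^{(\epsilon)}$, one also has $d(E,\bar F^{(\epsilon)})\ge d(E,F)/2$, so every integral localized to $\bar F^{(\epsilon)}$ will carry a Gaussian factor of size $e^{-c\,d(E,F)^2/t}$.

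Next we would derive the key inequality displayed just before the statement. Set $u:=H_tf$; by analyticity of the heat semigroup, $u\in W^{1,2}(X)\cap\mathcal D(\Delta)$ for $t>0$, and hence the weak Laplacian identity $\int_X\la\nabla u,\nabla\psi\ra\,d\mu=-\int_X\psi\,\Delta u\,d\mu$, valid a priori for $\psi\in W^{1,2}_c(X)$, extends to every $\psi\in W^{1,2}(X)$ since $\Delta u\in L^2(X)$. We would then test it with $\psi=\chi^2u\in W^{1,2}(X)$, expand $\nabla(\chi^2u)=\chi^2\nabla u+2\chi u\,\nabla\chi$ by the Leibniz rule, move $\int_X\chi^2|\nabla u|^2\,d\mu$ to the left, estimate $\big|\int_X\chi^2u\,\Delta u\,d\mu\big|\le\|\Delta u\|_{L^2(\bar F^{(\epsilon)})}\|u\|_{L^2(X)}$ using $\chi\le1$ and $\supp\chi\subset\bar F^{(\epsilon)}$, bound the cross term by Cauchy--Schwarz as $\big|2\int_X\chi u\,\nabla\chi\cdot\nabla u\,d\mu\big|\le2\big(\int_X|\nabla\chi|^2u^2\,d\mu\big)^{1/2}\big(\int_X\chi^2|\nabla u|^2\,d\mu\big)^{1/2}$, and multiply through by $t$; this reproduces exactly the quoted inequality.

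Finally we would estimate the right-hand side. Since $\supp f\subset E$, Lemma~\ref{gaffney-1} applied with $\bar F^{(\epsilon)}$ in the role of $F$ yields $\|u\|_{L^2(X)}\le\|f\|_{L^2(E)}$, $\|u\|_{L^2(\bar F^{(\epsilon)})}\le e^{-d(E,F)^2/(16t)}\|f\|_{L^2(E)}$ and $\|t\Delta u\|_{L^2(\bar F^{(\epsilon)})}\le Ce^{-d(E,F)^2/(24t)}\|f\|_{L^2(E)}$; since $|\nabla\chi|\le4/d(E,F)$ is supported in $\bar F^{(\epsilon)}$, also $t\int_X|\nabla\chi|^2u^2\,d\mu\le 16\,t\,d(E,F)^{-2}e^{-d(E,F)^2/(8t)}\|f\|_{L^2(E)}^2$. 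Writing $a:=t\int_X\chi^2|\nabla u|^2\,d\mu$, absorbing the $a^{1/2}$-factor by Young's inequality, and using $\chi\equiv1$ on $F$ (so that $t\||\nabla H_tf|\|_{L^2(F)}^2\le a$), we would reach $t\||\nabla H_tf|\|_{L^2(F)}^2\le a\le C\big(e^{-d(E,F)^2/(24t)}+t\,d(E,F)^{-2}e^{-d(E,F)^2/(8t)}\big)\|f\|_{L^2(E)}^2$. When $d(E,F)^2\ge t$ the second summand is dominated by $e^{-d(E,F)^2/(8t)}\le e^{-d(E,F)^2/(24t)}$, which gives the claim with, say, $\beta=1/48$ and an absolute constant $C$ (for $K\ge0$); when $d(E,F)^2<t$ we would instead invoke $t\||\nabla H_tf|\|_{L^2(F)}^2\le t\||\nabla H_tf|\|_{L^2(X)}^2\le C\|f\|_{L^2(E)}^2$ from Proposition~\ref{caccioppoli}, noting that $e^{-\beta d(E,F)^2/t}$ is then bounded below. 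For $K<0$ the same reasoning applies with the $(t\wedge1)$-truncated estimates of Proposition~\ref{caccioppoli} and Proposition~\ref{map-hk}(iii), the resulting $C$ and $\beta$ then depending on $K$.

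The step we expect to cost the most effort is the rigorous justification of the key inequality in the non-smooth framework --- that $\chi^2u$ is an admissible test function in $W^{1,2}(X)$, that the Leibniz rule applies to it, and that the weak Laplacian identity extends from $W^{1,2}_c(X)$ to $W^{1,2}(X)$ --- together with the careful bookkeeping of which $\epsilon$-neighbourhood of $F$ supports each term, which is precisely what produces the Gaussian factors. The subsequent Cauchy--Schwarz and Young estimates, and the split according to whether $d(E,F)^2\ge t$ or not, are routine.
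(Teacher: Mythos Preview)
Your proposal is correct and follows essentially the same route as the paper's own (sketched) proof: the paper states precisely the key inequality
\[
t\int_X \chi^2 |\nabla H_tf|^2\, d\mu \le \|t\Delta H_tf\|_{L^2(\bar{F}^{(\epsilon)})}\|H_tf\|_{L^2(X)}+2\Big(t\int_X |\nabla\chi|^2(H_tf)^2\,d\mu \Big)^{1/2}\Big(t\int_X \chi^2|\nabla H_tf|^2\, d\mu \Big)^{1/2},
\]
built from the cut-off of Lemma~\ref{cutoff}, and then invokes the two Davies--Gaffney estimates of Lemma~\ref{gaffney-1}, citing \cite[Lemma~3.10]{BaudoinGarofalo2012} and omitting further details. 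You have supplied exactly those details --- the integration-by-parts derivation with test function $\chi^2 H_tf$, the absorption via Young's inequality, and the split according to whether $d(E,F)^2\ge t$ --- so the two arguments coincide.
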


Now we are in position to present another proof of Theorem \ref{Riesz}.
\begin{proof}[Proof of Theorem \ref{Riesz} for $p\in (1,2)$]
As the proof given in Section 5.1, we only need to show that $|\nabla (-\Delta)^{-1/2}|$ is of weak type $(1,1)$, which is implied by combining Theorem \ref{lowerbound-nonnegative}, the doubling property in Lemma \ref{doubling} (i) and the third Davies--Gaffney type estimate in Lemma \ref{gaffney-3}.
\end{proof}

\begin{rem}
The result that the Riesz transform $|\nabla (-\Delta)^{-1/2}|$ is bounded in $L^p(X)$ for all $p\in (1,2)$ is in fact can also be shown following the proof of \cite[Theorem 5]{Sikora2004}, once we let $\alpha=\frac{1}{2}$, $L=\Delta$ and $A=|\nabla\cdot|$ in the aforementioned theorem and combine with Theorem \ref{lowerbound-nonnegative} and Corollary \ref{gaffneycorollary}.
\end{rem}

\begin{proof}[Proof of Theorem \ref{Riesz} for $p\in (2,\infty)$]
Following the method in \cite[Sections 2 and 3]{acdh04}, we only need to combine Lemma \ref{doubling} (i), the first part of Lemma \ref{localPoicare}, Proposition \ref{caccioppoli}, Lemma \ref{gaffney-1} and Lemma \ref{gaffney-3}, and then complete the proof.
\end{proof}

As for Theorem \ref{localRiesz}, by the localization technique in \cite[Section 5]{DuongRobinson1996} (see also \cite[Section 4]{acdh04} in the Riemannian setting), the local case can be proved as in the global case.

\subsection*{Acknowledgment}
\hskip\parindent  The authors would like to thank the referee for
the very detailed and valuable report, which improved the article.

\noindent Renjin Jiang

\vspace{0.1cm}
\noindent
School of Mathematical Sciences, Beijing Normal University,
Laboratory of Mathematics and Complex Systems, Ministry of Education, 100875, Beijing, China

\vspace{0.2cm}
\noindent{\it E-mail address}: \texttt{rejiang@bnu.edu.cn}

\vspace{0.2cm}
\noindent Huaiqian Li

\vspace{0.1cm}
\noindent
School of Mathematics, Sichuan University, Chengdu 610064,  China

\vspace{0.2cm}
\noindent{\it E-mail address}: \texttt{hqlee@amss.ac.cn}

\vspace{0.2cm}
\noindent Huichun Zhang

\vspace{0.1cm}
\noindent Department of Mathematics, Sun Yat-sen University, Guangzhou 510275, China

\vspace{0.2cm}
\noindent{\it E-mail address}: \texttt{zhanghc3@mail.sysu.edu.cn}

\end{document}